\newtheorem{theorem}{Theorem}[section]
\newtheorem{definition}[theorem]{Definition}
\newtheorem{lemma}[theorem]{Lemma}
\newtheorem{conjecture}[theorem]{Conjecture}
\renewcommand{\P}{\operatorname*{\mathbf{P}}}
\newcommand{\E}{\operatorname*{\mathbf{E}}}
\newcommand{\Var}{\operatorname*{\mathbf{Var}}}
\newcommand{\Cov}{\operatorname{\mathbf{Cov}}}
\DeclareMathOperator{\VAL}{VAL}
\DeclareMathOperator{\Inf}{Inf}
\DeclareMathOperator{\Norm}{N}
\DeclareMathOperator{\PLUR}{PLUR}
\DeclareMathOperator{\MAJ}{MAJ}
\DeclareMathOperator{\UniqueBest}{UniqueBest}
\newcommand{\ip}[2]{\langle #1 , #2 \rangle}
\newcommand{\bigoh}{\ensuremath{\mathcal{O}}}
\newcommand{\calV}{\ensuremath{\mathcal{V}}}
\newcommand{\calX}{\ensuremath{\mathcal{X}}}
\newcommand{\calY}{\ensuremath{\mathcal{Y}}}
\newcommand{\calG}{\ensuremath{\mathcal{G}}}
\newcommand{\calH}{\ensuremath{\mathcal{H}}}
\newcommand{\calL}{\ensuremath{\mathcal{L}}}
\newcommand{\calM}{\ensuremath{\mathcal{M}}}
\newcommand{\Reals}{\ensuremath{\mathbb{R}}}
\newcommand{\Borel}{\ensuremath{\mathbb{B}}}
\newcommand{\calZ}{\ensuremath{\mathcal{Z}}}
\DeclareMathOperator{\stab}{\mathbb{S}}
\DeclareMathOperator{\Sp}{\stab_{\rho}}
\newcommand{\Qtilde}{\ensuremath{\widetilde{Q}}}
\renewcommand{\vec}[1] {\ensuremath{\mathbf{#1}}}
\newcommand{\Infd}{\Inf^{\le d}}
\newcommand{\Sphere}{\ensuremath{\mathrm{S}}}
\newcommand{\SphereMo}{\ensuremath{\Sphere^{m\!-\!1}}}
\definecolor{Red}{rgb}{1,0,0}
\definecolor{Blue}{rgb}{0,0,1}
\definecolor{Olive}{rgb}{0.41,0.55,0.13}
\definecolor{Green}{rgb}{0,1,0}
\definecolor{MGreen}{rgb}{0,0.8,0}
\definecolor{DGreen}{rgb}{0,0.55,0}
\definecolor{Yellow}{rgb}{1,1,0}
\definecolor{Cyan}{rgb}{0,1,1}
\definecolor{Magenta}{rgb}{1,0,1}
\definecolor{Orange}{rgb}{1,.5,0}
\definecolor{Violet}{rgb}{.5,0,.5}
\definecolor{Purple}{rgb}{.75,0,.25}
\definecolor{Brown}{rgb}{.75,.5,.25}
\definecolor{Grey}{rgb}{.5,.5,.5}
\definecolor{Black}{rgb}{0,0,0}
\newcommand{\eps}{\epsilon}
\begin{document}

\title{Maximally Stable Gaussian Partitions with Discrete Applications}
\author{
  Marcus Isaksson
  \thanks{
    Chalmers University of Technology and G{\"o}teborg University,
    SE-41296 G{\"o}teborg, Sweden.
    maris@chalmers.se.
  }
  \and
  Elchanan Mossel
  \thanks{U.C. Berkeley and Weizmann Institute.
    mossel@stat.berkeley.edu.
    Supported by an Alfred Sloan fellowship
    in Mathematics, by NSF CAREER grant DMS-0548249 (CAREER), by DOD ONR
    grant (N0014-07-1-05-06), by BSF grant 2004105 and by ISF grant
    1300/08
  }
}

\maketitle

\begin{abstract}
Gaussian noise stability results have recently played an important role
in proving results in hardness of approximation in
computer science and in the study of voting schemes in social
choice.
We prove a new Gaussian noise stability result
generalizing an isoperimetric result by Borell on the heat kernel
and derive as applications:
\begin{itemize}
\item
An optimality result for majority in the context of Condorcet voting.
\item
A proof of a conjecture on ``cosmic coin tossing'' for low influence
functions.
\end{itemize}
We also discuss a Gaussian noise stability conjecture which may be
viewed as a generalization of the ``Double Bubble'' theorem and show
that it implies:
\begin{itemize}
\item
A proof of the ``Plurality is Stablest Conjecture''.
\item
That the Frieze-Jerrum SDP for MAX-q-CUT achieves the optimal
approximation factor assuming the Unique Games Conjecture.
\end{itemize}

\end{abstract}

\section{Introduction}

Recent results in hardness of approximation in computer science
\cite{KKMO:07,DiMoRe:06,Austrin:07a,Austrin:07b,
  ODonnellYi:08,Guruswami:08}
and in the study of voting schemes in social choice
\cite{Kalai:02,Mossel:08}
crucially rely
on Gaussian noise stability results. The first result in hardness of
approximation established a tight inapproximability result for
MAX-CUT assuming the Unique Games Conjecture~\cite{KKMO:04}
while the latest results
conditionally achieve optimal
inapproximation factors for very general families of
constraint satisfaction problems~\cite{Austrin:07b,Raghavendra:08}.
Results in social choice include optimality of the majority function among low
influence functions in the context of Condorcet voting on $3$
candidates~\cite{Kalai:02} and near optimality
for any number of candidates~\cite{Mossel:08}.
A common feature of these results is the use of ``Invariance
Principles''~\cite{MoOdOl:05,MoOdOl:09,Mossel:08} together with an optimal Gaussian
noise stability result by Borell~\cite{Borell:85}.

In the current paper we prove a theorem
generalizing the result of Borell~\cite{Borell:85},
discuss a related conjecture and develop an
extension of the invariance principle.
As applications we derive some new results in
hardness of approximation and social choice.
In the introduction we state the theorem and the conjecture together
with their applications and provide ``moral support'' for the
correctness of the conjecture.

\subsection{Maximally Stable Gaussian Partitions}
We will be concerned with finding partitions of $\Reals^n$
that maximize the probability that correlated Gaussian vectors
remain within the same part.
More specifically we would like to partition $\Reals^n$ into $q\ge 2$
disjoint sets of
fixed measure.

Borell~\cite{Borell:85} proved that when $q=2$ and we have two
standard Gaussian vectors
with covariance $\rho \ge 0$ in corresponding coordinates then
half-spaces
are optimal.
Let $I_n$ be the $n \times n$ identity matrix. For two $n$-dimensional
random variables $X = (X_1,\ldots,X_n)$ and
$Y = (Y_1,\ldots,Y_n)$ write
$\Cov(X,Y)$ for the $n \times n$ matrix whose $(i,j)$'th entry is
given by $\Cov[X_i,Y_j] = \E[X_i Y_j] - \E[X_i] \E[Y_j]$.
Recall that $X \sim N(0,I_n)$ means that
$X$ is a standard $n$-dimensional Gaussian vector with independent entries, all of which are standard normal random variables.
Borell's result states the following:

\begin{theorem}\cite{Borell:85}
  \label{thm:Borell}
  Fix $\rho \in [0,1]$.
  Suppose $X,Y \sim \Norm(0, I_n)$ are jointly normal
  and $\Cov(X,Y)=\rho I_n$.
  Then for any
  $A_1,A_2 \subseteq \Reals^n$,
  \begin{equation}
    \P(X \in A_1, Y \in A_2) \le \P(X \in H_1, Y \in H_2)
  \end{equation}
  where $H_i=\{x \in \Reals^n | x_1 \le a_i\}$ for
  $a_i$ chosen so that $\P(X \in H_i)=\P(X \in A_i)$.
\end{theorem}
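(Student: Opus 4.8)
The plan is to recast the quantity using the Ornstein--Uhlenbeck semigroup, reduce to pairs of half-spaces by a Gaussian symmetrization argument, and finish by an elementary optimization over the angle between the two half-spaces. Write $\gamma_n$ for standard Gaussian measure on $\Reals^n$ and $T_\rho f(x)=\E[f(\rho x+\sqrt{1-\rho^2}\,Z)]$, $Z\sim\Norm(0,I_n)$, for the Ornstein--Uhlenbeck operator, so that $\P(X\in A_1,Y\in A_2)=\ip{\mathbf 1_{A_1}}{T_\rho\mathbf 1_{A_2}}_{L^2(\gamma_n)}$. Two consequences of the hypothesis will be used. First, since $\Cov(X,Y)=\rho I_n$, the $2n\times 2n$ covariance matrix of $(X,Y)$ decouples into $n$ identical $2\times2$ blocks, so the pairs $(X_i,Y_i)$ are i.i.d.\ centered bivariate Gaussians with correlation $\rho$. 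Second, for half-spaces $H_i=\{\ip{x}{u_i}\le a_i\}$ the vector $(\ip{X}{u_1},\ip{Y}{u_2})$ is a centered bivariate Gaussian with correlation $\rho\ip{u_1}{u_2}$, whence $\P(X\in H_1,Y\in H_2)=\Lambda_{\rho\ip{u_1}{u_2}}\bigl(\Phi(a_1),\Phi(a_2)\bigr)$, where $\Phi$ is the standard normal c.d.f.\ and $\Lambda_r(\alpha,\beta):=\P\bigl(N_1\le\Phi^{-1}(\alpha),\,N_2\le\Phi^{-1}(\beta)\bigr)$ for $(N_1,N_2)$ centered bivariate Gaussian with correlation $r$.

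The core is a symmetrization lemma. For a unit vector $e$, let $S_eA$ be the \emph{Gaussian symmetrization} of $A$ in direction $e$: for each $\bar x\in e^\perp$ replace the slice $A\cap(\bar x+\Reals e)$ by the ray $\{\bar x+te:\,t\le c(\bar x)\}$ with $c(\bar x)$ chosen so that the one-dimensional Gaussian measure is preserved; then $\gamma_n(S_eA)=\gamma_n(A)$. The lemma states that $\P\bigl(X\in S_eA_1,\,Y\in S_eA_2\bigr)\ge\P\bigl(X\in A_1,\,Y\in A_2\bigr)$. To see this, condition on the coordinates orthogonal to $e$; by the first consequence above, the pair $(\ip{X}{e},\ip{Y}{e})$ is independent of these coordinates and is a centered bivariate Gaussian with correlation $\rho$, so the inequality reduces, pointwise in the orthogonal coordinates, to the one-dimensional statement that among $S,T\subseteq\Reals$ of prescribed Gaussian measure the probability $\P(G_1\in S,\,G_2\in T)$ is maximized by co-oriented half-lines. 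That one-dimensional statement is the classical rearrangement inequality for log-supermodular (equivalently, totally positive of order $2$) kernels applied to the joint density of $(G_1,G_2)$: for $\rho\ge0$ the mixed second derivative of its logarithm equals $\rho/(1-\rho^2)\ge0$, so moving the mass of both sets toward a common corner cannot decrease the integral. (This is the only place $\rho\in[0,1]$ enters.)

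Now iterate: fix a countable dense set of directions and apply the maps $S_e$ to $(A_1,A_2)$ in an order that uses each direction infinitely often. The resulting values are nondecreasing and bounded, hence converge. Gaussian symmetrization does not increase Gaussian perimeter (Ehrhard's inequality), and indicators of sets with bounded Gaussian perimeter form an $L^1(\gamma_n)$-precompact family, so one may pass to a subsequential limit $(A_1^\infty,A_2^\infty)$ in $L^1(\gamma_n)^2$; the functional $(\mathbf 1_B,\mathbf 1_C)\mapsto\ip{\mathbf 1_B}{T_\rho\mathbf 1_C}$ is $L^1(\gamma_n)$-Lipschitz, so the limit attains the limiting value and is invariant modulo null sets under $S_e$ for every $e$ in the dense set, hence for every unit vector $e$. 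A measurable set invariant under Gaussian symmetrization in all directions coincides up to a null set with a half-space. Therefore the supremum of $\P(X\in A_1,Y\in A_2)$ over measurable pairs with prescribed $\gamma_n(A_1),\gamma_n(A_2)$ equals the supremum over pairs of half-spaces of the same measures, and the latter is attained. Optimizing over half-spaces is then easy: by the second consequence above, with $\Phi(a_i)=\gamma_n(A_i)$ we have $\P(X\in H_1,Y\in H_2)=\Lambda_{\rho\ip{u_1}{u_2}}(\gamma_n(A_1),\gamma_n(A_2))$, and by Price's theorem $\partial_r\Lambda_r(\alpha,\beta)$ is the value of the bivariate Gaussian density at $(\Phi^{-1}\alpha,\Phi^{-1}\beta)$, which is positive; hence $\Lambda_r$ increases in $r$, and since $\rho\ge0$ and $\ip{u_1}{u_2}\le1$ the maximum occurs at $u_1=u_2$ --- parallel, co-oriented half-spaces --- which by rotational invariance we may take with common normal $e_1$, yielding exactly the sets $H_i=\{x_1\le a_i\}$, $a_i=\Phi^{-1}(\gamma_n(A_i))$, of the statement.

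The step I expect to be the real obstacle is the compactness/convergence argument in the last paragraph: turning ``monotone under every symmetrization'' into ``the optimizer is a half-space'' needs genuine care, since a priori the iterates need not converge. Borell's original device avoids this in $\Reals^n$ by transporting the problem to the sphere $\mathrm{S}^{N-1}(\sqrt N)$ via Poincar\'e's limit (Gaussian measure as the large-$N$ limit of the projection of the uniform measure on the sphere); there $S_e$ becomes a two-point polarization for which convergence to spherical caps is classical (Baernstein--Taylor), and spherical caps push forward to half-spaces as $N\to\infty$. I would follow that route if the direct $L^1(\gamma_n)$ argument proves delicate.
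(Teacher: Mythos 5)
The paper does not prove Theorem~\ref{thm:Borell} itself --- it cites it and then proves the generalization Theorem~\ref{thm:hOpt} (via Theorem~\ref{thm:hOptSphere}), which yields Borell's theorem as the $k=2$, $\Sigma=\begin{pmatrix}1&\rho\\\rho&1\end{pmatrix}$ special case. The paper's route there is: (i) the extended Riesz two-function rearrangement inequality \emph{on the sphere} (Theorem~\ref{thm:Burchard}), (ii) its application to spherical $\Sigma$-noise stability to get caps about a fixed pole, (iii) Poincar\'e's limit to push caps to parallel half-spaces in Gaussian space. Your \emph{primary} route is genuinely different: you run a Gaussian symmetrization directly in $\Reals^n$, reduce the one-step monotonicity to a one-dimensional log-supermodular (TP2) rearrangement, iterate, and extract a half-space limit by an $L^1(\gamma_n)$ compactness argument; you then close with a Price/Plackett monotonicity in the half-space correlation. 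Your stated \emph{fallback} --- Poincar\'e's limit plus Baernstein--Taylor polarization on the sphere --- is, up to wording, exactly the paper's route. Each has virtues: the paper's route imports a strong black-box inequality (Burchard) and never has to worry about convergence of an iteration, while your direct route stays in Gaussian space and makes the role of log-supermodularity of the kernel (hence of the sign of $\rho$) very transparent.

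The genuine gap is the one you flagged, and it is more than a technicality. First, the $L^1(\gamma_n)$-precompactness via bounded Gaussian perimeter does not apply to arbitrary measurable $A_1,A_2$: they may have infinite Gaussian perimeter, and a single Gaussian symmetrization $S_e$ does not produce a set of finite perimeter in general (the slice endpoints $c(\bar x)$ need only be measurable in $\bar x$). You would need to first approximate $A_i$ by, say, finite unions of boxes, prove the bound there, and pass to the limit using $L^1$-continuity of $(\mathbf 1_B,\mathbf 1_C)\mapsto\ip{\mathbf 1_B}{T_\rho\mathbf 1_C}$. Second, the iteration scheme ``apply $S_e$ for each $e$ in a dense set infinitely often'' does not converge as stated: a half-space $\{\ip{x}{v}\le c\}$ is fixed by $S_e$ only when $\ip{e}{v}\ge 0$, so a set cannot be invariant under $S_e$ for a dense set of directions $e$. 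You must fix a reference direction (a ``pole''), restrict to symmetrizations compatible with it (the Gaussian analogue of choosing polarizations that move mass toward the pole in the Baernstein--Taylor scheme), and phrase the invariance-implies-half-space step accordingly. Third, the ``classical'' one-dimensional rearrangement for log-supermodular kernels deserves a reference or a short proof --- it is true (a Lorentz/Cambanis--Simons--Stout-type supermodular rearrangement), but it is the $n=1$ instance of the very statement being proved, so it needs to be discharged independently. None of these is conceptually wrong, but until they are repaired the direct route is incomplete, whereas the sphere route you sketch as backup is exactly what the paper executes in full for the generalization.
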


We will consider two different generalizations of
Theorem~\ref{thm:Borell}.
The first generalization claims that half-spaces are still optimal if we
have $k>2$ correlated vectors and seek to maximize the probability
that they all fall into the same part:
\begin{theorem}[Exchangeable Gaussians Theorem, or EGT]
  \label{thm:nGauss2SplitIsop}
  Fix $\rho \in [0,1]$.
  Suppose $X_1,\dots,X_k \sim \Norm(0, I_n)$
  are jointly normal and $\Cov(X_i,X_j)=\rho
  I_n$ for $i\neq j$.
  Then, for any $A_1, \ldots, A_n \subseteq \Reals^n$,
  \begin{equation}
    \label{eq:nGauss2SplitIsop}
    \P(\forall i: X_i \in A_i)
    \le
    \P(\forall i: X_i \in H_i)
  \end{equation}
  where $H_i=\{x \in \Reals^n | x_1 \le a_i\}$ for
  $a_i$ chosen so that $\P(X \in H_i)=\P(X \in A_i)$.
\end{theorem}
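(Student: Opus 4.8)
The plan is to strip the covariance structure via a common-factor representation, reducing to an inequality for the Gaussian noise (Ornstein--Uhlenbeck) operator, and then to prove that inequality by a symmetrization argument in the spirit of Borell's proof of Theorem~\ref{thm:Borell}.

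Since $\rho\ge 0$, write $X_i=\sqrt\rho\,Z+\sqrt{1-\rho}\,W_i$ with $Z,W_1,\dots,W_k\sim\Norm(0,I_n)$ independent, so that $\Cov(X_i,X_j)=\rho I_n$ for $i\ne j$. With the noise operator $(U_\gamma g)(z):=\E_W[g(\gamma z+\sqrt{1-\gamma^2}\,W)]$ and $f_i:=U_{\sqrt\rho}1_{A_i}\colon\Reals^n\to[0,1]$, conditioning on $Z$ gives
\begin{equation}
  \P(\forall i:X_i\in A_i)=\E_Z\Big[\,\textstyle\prod_{i=1}^k f_i(Z)\,\Big],\qquad \E_Z[f_i(Z)]=\P(X\in A_i)=:\mu_i.
\end{equation}
For a half-space $H_i=\{x_1\le a_i\}$ the function $U_{\sqrt\rho}1_{H_i}$ is the \emph{decreasing} function $z\mapsto\Phi\big((a_i-\sqrt\rho\,z_1)/\sqrt{1-\rho}\big)$ of $z_1$ alone, which makes the half-space choice special in two ways: all the $f_i$ become monotone functions of the \emph{same} coordinate (so $(f_1(Z),\dots,f_k(Z))$ is comonotone and nothing cancels in the product), and, among sets of measure $\mu_i$, a half-space makes $\mathrm{Law}(U_{\sqrt\rho}1_{A_i}(Z))$ maximal in the convex order (the ``heat-kernel'' content of Borell's theorem). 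Morally, the theorem asserts that parallel half-spaces optimize the $k$ one-dimensional marginals and couple them comonotonically at the same time, which is the best possible for $\E[\prod_i(\cdot)]$.

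To make this rigorous I would symmetrize the sets directly. Apply to $A_1,\dots,A_k$ simultaneously \emph{the same} elementary symmetrization in a common direction --- most transparently a two-point (polarization) symmetrization, either directly in Gaussian space (an Ehrhard-type step) or, after passing to the sphere $\sqrt m\,\Sphere^{m-1}$ via Poincar\'e's limit, where polarization is cleanest. Such a step preserves every $\gamma_n(A_i)$ since the ambient measure is reflection-symmetric, and the point is that it never decreases $\P(\forall i:X_i\in A_i)=\int\prod_i1_{A_i}(x_i)\,d\gamma_{nk}(x_1,\dots,x_k)$. Here $\rho\ge 0$ enters through the joint density, whose log equals $-\tfrac1{2(1-\rho)}\sum_i|x_i|^2+\tfrac{\rho}{2(1-\rho)(1+(k-1)\rho)}\big|\sum_i x_i\big|^2$ up to a constant, so the interaction term carries a $+$ sign and the two-point rearrangement inequality for the density cooperates; one then runs a dense (or random) sequence of such symmetrizations converging to a family of parallel half-spaces and uses upper semicontinuity of the functional to pass to the limit.

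I expect the crux to be precisely this single-step monotonicity under a symmetrization applied to all $k$ sets at once --- the correlation inequality for the $nk$-dimensional Gaussian density saying that polarizing all of the $A_i$ toward a common half-space never hurts --- together with the convergence and semicontinuity needed to conclude. For $k=2$ it reduces to Theorem~\ref{thm:Borell}; the new input is $k\ge 3$, where one must verify that the interactions over all pairs pull in the same direction, which is exactly where $\rho\ge 0$ is used.
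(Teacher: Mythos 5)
Your overall strategy — pass to the sphere via Poincar\'e's limit and then symmetrize all $k$ sets simultaneously toward parallel caps — is in fact the same framework the paper uses, and your decomposition of the log-density as $-\tfrac{1}{2(1-\rho)}\sum_i|x_i|^2+\tfrac{\rho}{2(1-\rho)(1+(k-1)\rho)}|\sum_i x_i|^2$ is correct and does isolate where $\rho\ge 0$ enters. However, the crux of the argument is exactly the step you leave as a plausibility claim: that applying a single polarization (or more generally a single rearrangement step) to all of $A_1,\dots,A_k$ at once never decreases the $k$-fold integral with a product of decreasing pairwise kernels. This is the content of the Burchard--Schmuckenschl\"ager extended Riesz inequality on the sphere (\cite{Burchard:01}, Theorem~3), and it is what the paper invokes verbatim: after conditioning on the radial parts $\|Z_i\|$, the joint density over $(\Sphere^{m-1})^k$ becomes a product of indicators times $\prod_{i<j}e^{-(\Sigma^{-1})_{ij}\|Z_i\|\|Z_j\|\,\tilde Z_i\cdot\tilde Z_j}$, which (since $(\Sigma^{-1})_{ij}\le0$ off the diagonal when $\rho\ge0$) is a nonincreasing function of each $\|\tilde Z_i-\tilde Z_j\|$, and one then applies that theorem and takes $m\to\infty$ with a Portmanteau argument. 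Your proposal doesn't name that result, and the ``run a dense sequence of symmetrizations and pass to the limit by upper semicontinuity'' plan is precisely the nontrivial Baernstein--Taylor/Burchard machinery behind it; without verifying the single-step inequality for $k\ge 3$ sets (and the convergence of the symmetrization scheme), the argument has a hole at its center. I would call this a genuine gap rather than a complete proof by a different route.

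Two smaller remarks. First, your opening paragraph (common-factor representation $X_i=\sqrt\rho\,Z+\sqrt{1-\rho}\,W_i$, reduction to $\E_Z[\prod_i U_{\sqrt\rho}1_{A_i}(Z)]$, and the comonotone/convex-order heuristic) is correct as a reformulation but does not actually drive the proof you then sketch, which abandons it and works directly on the sets; for $k\ge 3$ the convex-order remark about a single $f_i$ does not combine across $i$ into the needed product inequality, so this paragraph reads as motivation rather than a proof step. Second, your alternative route (a) of polarizing directly in Gaussian space ``\`a la Ehrhard'' is substantially harder to justify than the spherical route; the paper deliberately goes to the sphere because that is where the multi-set rearrangement inequality is actually available in the literature.
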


We call the theorem above the {\em Exchangeable Gaussians Theorem} (EGT).
Recall that a collection of random variables is exchangeable if its
distribution is invariant under any permutation.

The second generalization of Theorem \ref{thm:Borell} concerns
the optimal partition of $\Reals^n$ into $q>2$ sets.
We conjecture that
when the partition is balanced (i.e. all $q$ sets have equal measure)
the optimal partition
is a \emph{standard simplex partition} dividing $\Reals^n$ into $q$ partitions
depending on which of $q$ maximally separated unit vectors are
closest (ties may be broken arbitrarily),
and further that this is the \emph{least stable} partition for $\rho
\le 0$, even for unbalanced partitions.

\begin{definition}
  \label{def:standardSimplex}
  For $n\!+\!1 \ge q \ge 2$,
  $A_1, \ldots, A_q$ is a \emph{standard simplex partition} of $\Reals^n$ if
  for all $i$
  \begin{equation}
    A_i \supseteq \{x \in \Reals^n | x \cdot a_i > x \cdot a_j, \forall j
    \neq i\}
  \end{equation}
  where $a_1, \ldots a_q \in \Reals^n$ are $q$ vectors satisfying
  \begin{equation}
    \label{eq:stdSimplexVecConstr}
    a_i \cdot a_j =
    \left\{
      \begin{array}{ll}
        1 & \text{ if } i=j \\
        -\frac{1}{q-1} & \text{ if } i \neq j
      \end{array}
    \right.
  \end{equation}
\end{definition}
When $n \ge q$ a standard simplex partition can be formed by picking
$q$ orthonormal vectors $e_1, \ldots, e_q$, subtracting their mean and
scaling appropriately, i.e.
\begin{equation}
  a_i = \sqrt{\frac{q}{q-1}} \left(e_i - \frac{1}{q} \sum_{i=1}^q e_i\right)
\end{equation}
and for $n = q-1$ it is enough to project these vectors onto the
$q-1$-dimensional space which they span.

We call $A_1, \ldots, A_q$ a balanced partition of $\Reals^n$ if
$A_1, \ldots, A_q$ are disjoint with $\P(X \in A_j)=\frac 1 q$, $\forall j$.

\begin{conjecture}[Standard Simplex Conjecture, or SSC]
  \label{conj:2GaussQSplitIsop}
  Fix $\rho \in [-1,1]$ and $3 \le q \le n \! + \! 1$.
  Suppose $X,Y \sim \Norm(0, I_n)$ are jointly normal and $\Cov(X,Y)=\rho I_n$.
  Let $A_1,\dots,A_q \subseteq \Reals^n$
  be a partition of $\Reals^n$
  and $S_1, \dots,S_q \subseteq \Reals^n$ a standard simplex partition.
  Then,
  \begin{enumerate}[i)]
  \setlength{\itemsep}{1pt}
  \setlength{\parskip}{0pt}
  \setlength{\parsep}{0pt}
  \item
    If $\rho \ge 0$ and $A_1, \ldots, A_q$ is \emph{balanced}, then
    \begin{equation}
      \label{eq:2GaussQSplitIsop}
      \P((X,Y) \in A_1^2 \cup \dots \cup A_q^2)
      \le
      \P((X,Y) \in S_1^2 \cup \dots \cup S_q^2)
    \end{equation}
    \item
    If $\rho <0$, \eqref{eq:2GaussQSplitIsop} holds in reverse:
    \begin{equation}
      \label{eq:2GaussQSplitIsopRev}
      \P((X,Y) \in A_1^2 \cup \dots \cup A_q^2)
      \ge
      \P((X,Y) \in S_1^2 \cup \dots \cup S_q^2)
    \end{equation}
  \end{enumerate}
\end{conjecture}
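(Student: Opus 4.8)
The plan is to recast both parts of the conjecture as the problem of extremizing
\[
  \Phi(A_1,\dots,A_q) \;=\; \sum_{i=1}^q \P\big((X,Y)\in A_i^2\big) \;=\; \sum_{i=1}^q \ip{1_{A_i}}{U_\rho 1_{A_i}},
\]
where $U_\rho$ is the Ornstein--Uhlenbeck operator, $U_\rho g(x)=\E[g(\rho x+\sqrt{1-\rho^2}\,Z)]$ with $Z\sim\Norm(0,I_n)$, and the inner product is taken in $L^2$ of the standard Gaussian measure. First I would relax to \emph{fractional} partitions $f=(f_1,\dots,f_q)$, $f_i\ge 0$, $\sum_i f_i\equiv 1$, extending $\Phi$ by $\Phi(f)=\sum_i\ip{f_i}{U_\rho f_i}$; the feasible set is convex and weak-$*$ compact and $\Phi$ is weak-$*$ continuous on it (since $U_\rho$ is Hilbert--Schmidt for $|\rho|<1$), so an optimizer exists. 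For $\rho\ge 0$ the eigenvalues of $U_\rho$ on Hermite polynomials are $\rho^d\ge 0$, so $\Phi$ is convex and its maximum over the feasible set is attained at an extreme point, i.e.\ an honest $\{0,1\}$-valued partition; this reduces part (i) to genuine balanced partitions. For part (ii) one uses $U_\rho g=U_{|\rho|}(g\circ(-\mathrm{id}))$, valid for $\rho<0$ by the symmetry of $Z$, which rewrites the functional as $\sum_i\P(X\in A_i,\ Y\in-A_i)$ at correlation $|\rho|$; since $\Phi$ is then indefinite, passing the minimization down to honest partitions is not automatic and needs a separate rounding/limiting argument (already visible at $\rho=-1$, where a half-space partition attains the minimal value $0$).

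\textbf{First-order conditions: self-consistency.} Next I would compute the first variation and conclude that the optimal partition is \emph{self-consistent}: up to measure zero,
\[
  A_i \;=\; \{\,x : (U_\rho 1_{A_i})(x) > (U_\rho 1_{A_j})(x)\ \text{ for all } j\neq i\,\}
\]
for the maximization ($\rho\ge 0$), with the inequality reversed for the minimization ($\rho<0$). A standard simplex partition $S_1,\dots,S_q$ satisfies this: writing $g=1_{S_i}-1_{S_j}$, the reflection swapping $a_i\leftrightarrow a_j$ makes $g$ odd across the bisecting hyperplane $\{x : x\cdot a_i=x\cdot a_j\}$ and nonnegative on its $a_i$-side, and a one-dimensional monotonicity computation for $U_\rho$ then shows $U_\rho g$ has a fixed sign (that of $\rho$) on the $a_i$-side; hence $U_\rho 1_{S_i}$ is largest, resp.\ smallest, exactly on $S_i$. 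So $S_1,\dots,S_q$ is a critical point with the expected behaviour in $\rho$; the content of the conjecture is that it is the \emph{global} extremizer.

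\textbf{The main obstacle.} Promoting ``critical point'' to ``global optimum'' is the crux, and the reason this is only a conjecture. For $q=2$, Borell's Theorem~\ref{thm:Borell} is proved by Ehrhard-type Gaussian symmetrization: one rearranges $A_1$ into a half-space and $A_2=A_1^{\,c}$ comes along for free. For $q\ge 3$ this collapses --- symmetrizing one part destroys the partition structure of the remaining $q-1$ parts --- and no symmetrization that respects a genuine $q$-part partition is known. This is precisely the gap between the (easy) Gaussian isoperimetric inequality and the (hard) multi-bubble problems, and SSC should be viewed as a Gaussian analogue of the Double Bubble theorem; I therefore do not expect a direct symmetrization proof to succeed.

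\textbf{A more realistic route, and where it is hard.} Lacking symmetrization, the route I would actually pursue has three stages. (a) \emph{Dimension reduction}: show that some optimizer depends on at most $q-1$ coordinates, by averaging in directions orthogonal to the normals of its (self-consistent, hence piecewise-smooth) boundaries, collapsing the problem to $\Reals^{q-1}$. (b) \emph{Local optimality}: in the reduced dimension, compute the second variation of $\Phi$ at $S_1,\dots,S_q$ on the space of perturbations $(g_1,\dots,g_q)$ with $\sum_i g_i\equiv 0$ and show it has the correct sign, using the Hermite spectrum of $U_\rho$ together with the permutation symmetry among $S_1,\dots,S_q$. (c) \emph{Global uniqueness}: classify the self-consistent partitions of $\Reals^{q-1}$ and eliminate all competitors, e.g.\ by a degree-theoretic argument tracking the boundary of a suitable family, or by upgrading (b) to a quantitative stability estimate. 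Stage (c) is the genuinely hard part (stage (a) is also nontrivial): even for $q=3$, $n=2$ there is no available classification of the self-consistent, ``triple-junction'' partitions, so in practice one would first test the whole program on that smallest case and, as additional evidence, verify the $\rho\to 1^-$ limit, in which SSC degenerates to a Gaussian multi-bubble perimeter inequality that the simplex partition should minimize.
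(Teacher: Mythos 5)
The statement you are addressing is Conjecture~\ref{conj:2GaussQSplitIsop} in the paper, not a theorem: the paper gives no proof of the SSC, only a discussion of supporting evidence and conditional consequences. Your proposal correctly treats it as a conjecture and presents a proof \emph{plan} with an honest assessment of where it stalls, which is the appropriate thing to do; there is no paper proof to compare against. Your reformulation as the extremal problem for $\Phi(f)=\sum_i \ip{f_i}{U_\rho f_i}$ over fractional partitions, the self-consistency (Euler--Lagrange) characterization of optimizers, and the verification that the standard simplex is a critical point with the correct sign behavior in $\rho$ are all sound, and the last of these is a piece of supporting evidence not made explicit in the paper. The paper's own moral support is complementary: the proved equivalence (Theorems~\ref{thm:plurStabAppl} and~\ref{thm:plurStabApplRev}) of SSC with the Plurality is Stablest conjecture, which is a natural extension of the proved Majority is Stablest theorem; the asymptotic verification \eqref{eq:plurStabAsympt} for large $q$ from~\cite{KKMO:07}; and the Gaussian Double Bubble theorem of~\cite{Corneli:08} as the $\rho\to 1^-$ limit, which you also identify.

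Two technical points in your sketch deserve care, though neither is the real obstruction. First, for $\rho\ge 0$ the reduction from fractional to genuine partitions via extreme points must be done under the balance constraints $\E f_i=\frac{1}{q}$; the extreme points of the resulting convex set are still indicator-valued, but this needs a brief perturbation argument and is not automatic from the extreme point structure of the unconstrained set of simplex-valued functions. Second, the claim that $U_\rho g$ has a fixed sign on the $a_i$-side, where $g=1_{S_i}-1_{S_j}$, uses more than oddness of $g$ across the bisecting hyperplane: the one-dimensional slices of $g$ in the $a_i-a_j$ direction must be \emph{monotone}, not merely odd and nonnegative for positive argument. For standard simplex cones the slices are indeed monotone nondecreasing, but this should be spelled out, since an odd sign-definite function that is not monotone can fail the conclusion. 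With those caveats, your road map is reasonable and its final stage, global uniqueness among self-consistent partitions, is exactly the part that is genuinely open, in agreement with the paper's stance that this is a conjecture.
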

Conjecture~\ref{conj:2GaussQSplitIsop} was made by the second author
around 2004 during the writing of~\cite{MoOdOl:05}.

When $q=3$ the standard simplex partition,
also known as the \emph{standard Y partition} or the
\emph{peace sign partition},
is described in $\Reals^2$ by three
half-lines meeting at an $120$ degree angle at the origin
(Figure \ref{fig:peaceSign})
and in $\Reals^n$, where $n>2$, it can be exemplified by taking
the Cartesian product of the
peace sign partition and $\Reals^{n-2}$.

\begin{figure}[h]
  \label{fig:peaceSign}
  \centering
\psset{xunit=.5pt,yunit=.5pt,runit=.5pt}
\begin{pspicture}(104.37103271,96.30863953)
{
\newrgbcolor{curcolor}{0 0 0}
\pscustom[linewidth=0.35433072,linecolor=curcolor,linestyle=dashed,dash=2.12598425 0.35433071]
{
\newpath
\moveto(51.93925,-7.17577047)
\curveto(51.93925,63.69036953)(51.93925,63.69036953)(51.93925,63.69036953)
}
}
{
\newrgbcolor{curcolor}{0 0 0}
\pscustom[linestyle=none,fillstyle=solid,fillcolor=curcolor]
{
\newpath
\moveto(52.79681578,-5.32221239)
\lineto(51.94265439,-7.64503676)
\lineto(51.0884932,-5.32221226)
\curveto(51.59278963,-5.69330248)(52.28275466,-5.69116425)(52.79681578,-5.32221239)
\closepath
}
}
{
\newrgbcolor{curcolor}{0 0 0}
\pscustom[linewidth=0.35433072,linecolor=curcolor,linestyle=dashed,dash=2.12598425 0.35433071]
{
\newpath
\moveto(87.37233,28.25729953)
\curveto(16.50618,28.25729953)(16.50618,28.25729953)(16.50618,28.25729953)
}
}
{
\newrgbcolor{curcolor}{0 0 0}
\pscustom[linestyle=none,fillstyle=solid,fillcolor=curcolor]
{
\newpath
\moveto(85.51877191,29.1148653)
\lineto(87.84159628,28.26070392)
\lineto(85.51877178,27.40654273)
\curveto(85.88986201,27.91083915)(85.88772378,28.60080419)(85.51877191,29.1148653)
\closepath
}
}
{
\newrgbcolor{curcolor}{0 0 0}
\pscustom[linewidth=1.70787406,linecolor=curcolor]
{
\newpath
\moveto(94.89956,3.45417953)
\lineto(70.63653,17.34169953)
\lineto(52.43925,27.75732953)
\lineto(52.43925,27.75732953)
}
}
{
\newrgbcolor{curcolor}{0 0 0}
\pscustom[linestyle=none,fillstyle=solid,fillcolor=curcolor]
{
\newpath
\moveto(108.5362241,-4.35108822)
\curveto(108.91087696,-3.69652833)(109.74618006,-3.46935741)(110.40073995,-3.84401027)
\curveto(111.05529984,-4.21866313)(111.28247076,-5.05396622)(110.9078179,-5.70852611)
\curveto(110.53316504,-6.363086)(109.69786195,-6.59025692)(109.04330206,-6.21560406)
\curveto(108.38874217,-5.84095121)(108.16157125,-5.00564811)(108.5362241,-4.35108822)
\closepath
}
}
{
\newrgbcolor{curcolor}{0 0 0}
\pscustom[linestyle=none,fillstyle=solid,fillcolor=curcolor]
{
\newpath
\moveto(103.34836275,-1.38169288)
\curveto(103.72301561,-0.72713299)(104.55831871,-0.49996207)(105.2128786,-0.87461493)
\curveto(105.86743848,-1.24926778)(106.09460941,-2.08457088)(105.71995655,-2.73913077)
\curveto(105.34530369,-3.39369066)(104.51000059,-3.62086158)(103.85544071,-3.24620872)
\curveto(103.20088082,-2.87155586)(102.9737099,-2.03625277)(103.34836275,-1.38169288)
\closepath
}
}
{
\newrgbcolor{curcolor}{0 0 0}
\pscustom[linestyle=none,fillstyle=solid,fillcolor=curcolor]
{
\newpath
\moveto(98.1605014,1.58770246)
\curveto(98.53515426,2.24226235)(99.37045736,2.46943327)(100.02501725,2.09478042)
\curveto(100.67957713,1.72012756)(100.90674806,0.88482446)(100.5320952,0.23026457)
\curveto(100.15744234,-0.42429531)(99.32213924,-0.65146624)(98.66757936,-0.27681338)
\curveto(98.01301947,0.09783948)(97.78584855,0.93314258)(98.1605014,1.58770246)
\closepath
}
}
{
\newrgbcolor{curcolor}{0 0 0}
\pscustom[linewidth=1.70787406,linecolor=curcolor]
{
\newpath
\moveto(9.47894,3.95414953)
\lineto(33.74197,17.84166953)
\lineto(51.93925,28.25729953)
\lineto(51.93925,28.25729953)
}
}
{
\newrgbcolor{curcolor}{0 0 0}
\pscustom[linestyle=none,fillstyle=solid,fillcolor=curcolor]
{
\newpath
\moveto(-4.1577241,-3.85111822)
\curveto(-3.78307125,-4.50567811)(-4.01024217,-5.34098121)(-4.66480206,-5.71563406)
\curveto(-5.31936195,-6.09028692)(-6.15466504,-5.863116)(-6.5293179,-5.20855611)
\curveto(-6.90397076,-4.55399622)(-6.67679984,-3.71869313)(-6.02223995,-3.34404027)
\curveto(-5.36768006,-2.96938741)(-4.53237696,-3.19655833)(-4.1577241,-3.85111822)
\closepath
}
}
{
\newrgbcolor{curcolor}{0 0 0}
\pscustom[linestyle=none,fillstyle=solid,fillcolor=curcolor]
{
\newpath
\moveto(1.03013725,-0.88172288)
\curveto(1.4047901,-1.53628277)(1.17761918,-2.37158586)(0.52305929,-2.74623872)
\curveto(-0.13150059,-3.12089158)(-0.96680369,-2.89372066)(-1.34145655,-2.23916077)
\curveto(-1.71610941,-1.58460088)(-1.48893848,-0.74929778)(-0.8343786,-0.37464493)
\curveto(-0.17981871,0.00000793)(0.65548439,-0.22716299)(1.03013725,-0.88172288)
\closepath
}
}
{
\newrgbcolor{curcolor}{0 0 0}
\pscustom[linestyle=none,fillstyle=solid,fillcolor=curcolor]
{
\newpath
\moveto(6.2179986,2.08767246)
\curveto(6.59265145,1.43311258)(6.36548053,0.59780948)(5.71092064,0.22315662)
\curveto(5.05636076,-0.15149624)(4.22105766,0.07567469)(3.8464048,0.73023457)
\curveto(3.47175194,1.38479446)(3.69892287,2.22009756)(4.35348275,2.59475042)
\curveto(5.00804264,2.96940327)(5.84334574,2.74223235)(6.2179986,2.08767246)
\closepath
}
}
{
\newrgbcolor{curcolor}{0 0 0}
\pscustom[linewidth=1.70787406,linecolor=curcolor]
{
\newpath
\moveto(51.93925,28.25729953)
\lineto(51.93925,77.86359953)
}
}
{
\newrgbcolor{curcolor}{0 0 0}
\pscustom[linestyle=none,fillstyle=solid,fillcolor=curcolor]
{
\newpath
\moveto(51.93925,84.35352097)
\curveto(52.6934472,84.35352097)(53.30554927,83.7414189)(53.30554927,82.98722171)
\curveto(53.30554927,82.23302451)(52.6934472,81.62092244)(51.93925,81.62092244)
\curveto(51.1850528,81.62092244)(50.57295073,82.23302451)(50.57295073,82.98722171)
\curveto(50.57295073,83.7414189)(51.1850528,84.35352097)(51.93925,84.35352097)
\closepath
}
}
{
\newrgbcolor{curcolor}{0 0 0}
\pscustom[linestyle=none,fillstyle=solid,fillcolor=curcolor]
{
\newpath
\moveto(51.93925,90.33108018)
\curveto(52.6934472,90.33108018)(53.30554927,89.71897811)(53.30554927,88.96478091)
\curveto(53.30554927,88.21058372)(52.6934472,87.59848165)(51.93925,87.59848165)
\curveto(51.1850528,87.59848165)(50.57295073,88.21058372)(50.57295073,88.96478091)
\curveto(50.57295073,89.71897811)(51.1850528,90.33108018)(51.93925,90.33108018)
\closepath
}
}
{
\newrgbcolor{curcolor}{0 0 0}
\pscustom[linestyle=none,fillstyle=solid,fillcolor=curcolor]
{
\newpath
\moveto(51.93925,96.30863939)
\curveto(52.6934472,96.30863939)(53.30554927,95.69653732)(53.30554927,94.94234012)
\curveto(53.30554927,94.18814293)(52.6934472,93.57604086)(51.93925,93.57604086)
\curveto(51.1850528,93.57604086)(50.57295073,94.18814293)(50.57295073,94.94234012)
\curveto(50.57295073,95.69653732)(51.1850528,96.30863939)(51.93925,96.30863939)
\closepath
}
}
\end{pspicture}
  \caption{The peace sign partition}
\end{figure}

\subsection{Applications}

Given the numerous applications of the results of Borell together with invariance
\cite{KKMO:07,DiMoRe:06,Austrin:07a,Austrin:07b,
  ODonnellYi:08,Guruswami:08,
  Kalai:02,Mossel:08},
it is natural to expect that the generalizations discussed here will have a wide variety of applications. Here we derive the first applications in social choice theory and hardness of approximation in computer science:
\begin{itemize}
\item
  From the EGT we derive
  certain optimality of majority in Condorcet voting.
  More specifically, majority asymptotically maximizes the probability of
  having a unique winner in Condorcet voting with any number of
  candidates among low influence voting schemes.
  It also maximizes the probability of $k$ players agreeing, among low-influence functions, in the setting of cosmic coin flipping
  ~\cite{MosselODonnell:05, MORSS:06}.

\item
  The SSC implies
  the \emph{Plurality is Stablest conjecture} as well as showing that the
  Frieze-Jerrum~\cite{FriezeJerrum:95} SDP relaxation obtains the
  optimal approximation ratio for MAX-q-CUT assuming the Unique Games
  Conjecture.
\end{itemize}

The main tool for proving these applications is the invariance
principle of~\cite{MoOdOl:05,MoOdOl:09,Mossel:08} which we extend to
handle general Lipschitz continuous functions. We note that previous work proved
the invariance principle for $\mathcal{C}^3$ functions and some
specific Lipschitz continuous functions. The generalization of the invariance principle may be of independent interest.
We proceed with formal statements of the applications.

\subsubsection{Condorcet voting}
\label{sec:introCondorcet}
Suppose $n$ voters rank $k$ candidates by each voter $i$
providing a linear order $\sigma_i \in S(k)$ on the candidates.
In {\em Condorcet voting},
the rankings are aggregated by deciding for each pair
of candidates which one is preferred over the other by the $n$ voters.
This decision can be performed in many ways, but we will require that it
satisfies two criteria
\begin{itemize}
\item
  \emph{Independence of Irrelevant Alternatives (IIA)}.
  The decision of whether $a$ is preferred over $b$ can only depend on each voter's preference between $a$ and $b$.
\item
  \emph{Neutrality}. The decision must be invariant under permutations on the $k$ candidates.
\end{itemize}

More formally, the aggregation results in a tournament $G_k$ on the set $[k]$.
Recall that $G_k$ is a {\em tournament} on $[k]$ if it is a
directed graph on the vertex set $[k]$ such that for all $a,b \in [k]$
either $(a > b) \in G_k$ or $(b > a) \in G_k$.
Given individual
rankings $(\sigma_i)_{i=1}^n$ the tournament $G_k$ is defined as follows.
Let
\begin{equation}
 x_i^{a>b}=
 \left\{
   \begin{array}{ll}
     1 & \mbox{ if } \sigma_i(a) > \sigma_i(b) \\
     -1 & \mbox{ else }
   \end{array}
 \right.
 \text{, for } i \in [n] \text{ and } a,b \in [k].
\end{equation}
Note that $x^{b > a } = -x^{a > b}$.
By IIA and neutrality we may assume that
the binary decision between each pair of candidates is performed via
an anti-symmetric function $f : \{-1,1\}^n \to \{0,1\}$ so that
$f(-x) = 1-f(x)$ for all $x \in \{-1,1\}^n$.
The tournament $G_k = G_k(\sigma; f)$ is then defined
by letting $(a > b) \in G_k$ if and only if $f(x^{a > b}) = 1$.
A natural decision function is the majority function
$\MAJ_n:\{-1,1\}^n \rightarrow \{0,1\}$
defined by
$\MAJ_n(x) = 1_{\{\sum_{i=1}^n x_i \ge 0\}}$.

For the purposes of
social choice, some tournaments make more sense than others.
For example, it is desirable to have $G_k$ provide a linear ranking
of the candidates. But this is more than we can hope for,
since by Arrows impossibility theorem~\cite{Arrow:50} a linear ranking can
only be guaranteed when $f$ is a dictator, i.e.
$f(x) = \frac{1 \pm x_i}{2}$.
A weaker requirement is that there exist a
\emph{Condorcet winner} or a \emph{unique best candidate} in $G_k$,
i.e. for some $a \in [k]$: $(a > b) \in G_k, \forall b \neq a$.

Following~\cite{Kalai:04,Kalai:02,Mossel:08},
we consider the probability distribution
over $n$ voters, where the voters have independent preferences and
each one chooses a ranking uniformly at random among all $k!$
orderings. Note that the marginal
distributions on vectors $x^{a > b}$ is the uniform distribution over
$\{-1,1\}^n$ and that if $f : \{-1,1\}^n \to \{0,1\}$ is
anti-symmetric then $\E[f] = \frac{1}{2}$. The previous discussion and
the following definition are essentially taken from~\cite{Mossel:08}.
\begin{definition}
\label{def:uniqueBest}
For any anti-symmetric $f:\{-1,1\}^n \rightarrow \{0,1\}$
  let $\UniqueBest_k(f)$ denote the event that the Condorcet voting system
  described above results in a unique best candidate
  and $\UniqueBest_k(f,i)$ the event that the i:th candidate is unique best.
\end{definition}

Consider first the case $k=3$. In
this case $G_3$ has a unique best candidate if and only if it
corresponds to a linear ranking of the candidates.
Kalai~\cite{Kalai:02} studied the \emph{probability} of a rational
outcome (i.e. $G_3$ linear)
given that the $n$ voters vote independently and at random
from the $6$ possible rational rankings.  He showed that the
probability of a rational outcome in this case may be expressed as
$3 \, \stab_{1/3}(f)$.
Here $\Sp(f)$ denotes the noise stability
$\Sp(f) = \E[f(X)f(Y)]$
where $X$ is uniform on $\{-1,1\}^n$
and $Y$ is obtained from $X$
by independently rerandomizing each coordinate with probability $1-\rho$.

The influence $\Inf_i f$ of voter $i$, is the probability that voter $i$ can change the outcome of the election, i.e.
$\Inf_i f = \P(f(X) \neq f(X^{(i)}))$ where $X^{(i)}$ is obtained from $X$ by flipping the $i$:th coordinate.
It is natural to ask which function
$f$ with small influences is most likely to produce a rational
outcome.  Instead of considering small influences, Kalai
considered the essentially stronger assumption that $f$ is monotone and
``transitive-symmetric''; i.e., that for all $1 \leq i < j \leq n$
there exists a permutation $\sigma$ on $[n]$ with $\sigma(i) = j$
such that $f(x_1, \dots, x_n) = f(x_{\sigma(1)}, \dots,
x_{\sigma(n)})$ for all $(x_1, \dots, x_n)$.
Kalai conjectured that as $n \to \infty$ the maximum of
$3 \stab_{1/3}(f)$ among all transitive-symmetric functions
approaches the same limit as
$\lim_{n \to \infty} 3\, \stab_{1/3}(\MAJ_n)$.
This follows directly from
the Majority is Stablest Theorem~\cite{MoOdOl:05,MoOdOl:09}.
In~\cite{Mossel:08} similar, but sub-optimal results were obtained for any value of $k$.
More specifically it was shown that if
one considers Condorcet voting on $k$ candidates, then for all $\eps > 0$ there
exists $\tau  = \tau(k,\eps) > 0$
such that if $f : \{-1,1\}^n \to \{0,1\}$ is anti-symmetric
and $\Inf_i(f) \leq \tau$ for all $i$, then
\begin{equation} \label{eq:unique_max}
\P[\UniqueBest_k(f)] \leq k^{-1+o_k(1)} + \eps.
\end{equation}
Moreover for the majority function we have
$\Inf_i(\MAJ_n) = O(n^{-1/2})$
and it holds that
\begin{equation} \label{eq:maj_unique_max}
\P[\UniqueBest_k(\MAJ_n)] \geq k^{-1-o_k(1)} - o_n(1).
\end{equation}

\noindent
As a consequence of the EGT we provide tight results for every value of $k$.
\begin{theorem}
  \label{thm:condorcet}
  For any $k \ge 1$ and $\epsilon>0$
  there exists a $\tau(\epsilon, k)>0$
  such that for any anti-symmetric $f:\{-1,1\}^n \rightarrow \{0,1\}$
  satisfying $\max_i \Inf_i f \le \tau$,
  \begin{equation}
    \P[\UniqueBest_k(f)]
    \le
    \lim_{n \rightarrow \infty}
    \P[\UniqueBest_k(\MAJ_n)] + \epsilon
  \end{equation}
\end{theorem}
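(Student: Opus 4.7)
The plan is to reduce $\P[\UniqueBest_k(f)]$ to a Gaussian quantity via an invariance principle, bound that Gaussian quantity using the EGT, and finally match the bound to the Majority limit. Since $f$ is anti-symmetric, $\E[f] = 1/2$; and by neutrality of the aggregation together with the $S_k$-symmetry of the voter model, the events $\UniqueBest_k(f,a)$ for $a \in [k]$ are pairwise disjoint and equiprobable, so
\begin{equation}
  \P[\UniqueBest_k(f)] \;=\; k \cdot \P\bigl[\forall b \ne 1 \colon f(x^{1 > b}) = 1\bigr].
\end{equation}
A direct calculation for a uniformly random ranking shows $\E[x_i^{1 > b} x_i^{1 > c}] = 1/3$ for distinct $b,c \ne 1$, since $\P(x_i^{1 > b} = x_i^{1 > c})$ equals $\P(1 = \max\{1,b,c\}) + \P(1 = \min\{1,b,c\}) = 2/3$. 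Hence the $k-1$ Boolean vectors $(x^{1 > b})_{b \ne 1}$ in $\{-1,1\}^n$ are exchangeable with pairwise cross-covariance $\tfrac{1}{3} I_n$.

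Next I would invoke the Lipschitz invariance principle developed later in the paper, applied to the product functional $F(y_1, \ldots, y_{k-1}) = \prod_b \mathbf{1}_A(y_b)$, where $A \subseteq \Reals^n$ is a level set of the Hermite extension of $f$ of Gaussian measure $1/2$. Under the assumption $\max_i \Inf_i f \le \tau$, this yields jointly Gaussian vectors $Y^1, \ldots, Y^{k-1} \sim \Norm(0, I_n)$ with $\Cov(Y^b, Y^c) = \tfrac{1}{3} I_n$ for $b \ne c$ satisfying
\begin{equation}
  \P\bigl[\forall b \colon f(x^{1 > b}) = 1\bigr] \;=\; \P\bigl[\forall b \colon Y^b \in A\bigr] + \eta(\tau),
\end{equation}
with $\eta(\tau) \to 0$ as $\tau \to 0$, uniformly in $n$ and in $f$. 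Applying Theorem~\ref{thm:nGauss2SplitIsop} with all sets equal to $A$ and $\rho = 1/3$, and noting that the half-space of the same Gaussian measure is $H = \{x : x_1 \le 0\}$, gives
\begin{equation}
  \P\bigl[\forall b \colon Y^b \in A\bigr] \;\le\; \P\bigl[\forall b \colon Y^b \in H\bigr].
\end{equation}

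Finally, I would identify the right-hand side with the Majority limit. Since $\Inf_i(\MAJ_n) = O(n^{-1/2})$, the multivariate CLT applied to the vector $\bigl(\tfrac{1}{\sqrt n}\sum_i x_i^{1 > b}\bigr)_{b \ne 1}$ gives convergence to a jointly Gaussian family with pairwise correlation $1/3$, and $\UniqueBest_k(\MAJ_n, 1)$ is precisely the event that every coordinate of this vector is nonnegative; so $\lim_{n \to \infty} \P[\UniqueBest_k(\MAJ_n, 1)] = \P[\forall b \colon Y^b \in H]$. Multiplying through by $k$ and choosing $\tau$ so that $k\eta(\tau) \le \epsilon$ completes the argument.

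The main obstacle is the invariance step: the $\mathcal{C}^3$ form of the invariance principle from prior work does not apply directly to the indicator $\mathbf{1}_A$, so I would first approximate $\mathbf{1}_A$ by a Lipschitz surrogate at a scale depending on $\tau$, bound the approximation error in both the Boolean and the Gaussian ensembles using noise/anti-concentration estimates near the boundary of $A$, and then transfer across ensembles via the Lipschitz extension of the invariance principle promised in the paper. Verifying that the resulting error is $o_\tau(1)$ uniformly over all anti-symmetric $f$ with influences at most $\tau$, and over all $n$, is the technical heart of the argument.
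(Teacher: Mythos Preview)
Your overall architecture matches the paper's: reduce by symmetry to $\P[\UniqueBest_k(f,1)]=\P[\forall b\ne 1:\ f(x^{1>b})=1]$, compute the pairwise covariance $\tfrac13$ among the vectors $x^{1>b}$, pass to Gaussians via invariance, bound by the EGT with $\rho=\tfrac13$, and identify the half-space bound with the Majority limit by the CLT (this is exactly Lemma~\ref{lem:UniqueBestEqConj}). All of that is correct.

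The gap is the invariance step. You set $A=\{z:Q(z)\ge \tfrac12\}$ for the multilinear extension $Q$ of $f$, approximate $\mathbf 1_A$ by a Lipschitz surrogate at scale $\delta$, and plan to control the Gaussian-side error by anti-concentration of $Q(Y)$ near the level $\tfrac12$. Two problems. First, the Lipschitz invariance principle (Theorem~\ref{thm:invariancePrinciple3} or~\ref{thm:invariancePrincipleFinal}) itself requires either a degree bound on $Q$ or the geometric tail decay produced by smoothing; low influence alone gives neither, so you cannot yet invoke it. Second, and more seriously, the anti-concentration you need, $\P[Q(Y)\in(\tfrac12-\delta,\tfrac12+\delta)]=o(1)$, does not hold uniformly over low-influence $f$: Carbery--Wright gives only $O(d\,\delta^{1/d})$, which is useless when the Fourier mass of $f$ sits at unbounded degree. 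There is no mechanism in your outline that rules this out.

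The paper avoids both issues by applying the test function to the \emph{values} of $Q$ rather than to the underlying coordinates. Concretely (Theorem~\ref{thm:discreteEGT}): one first checks $\rho\bigl(\Sigma(x_i^{1>2}),\ldots,\Sigma(x_i^{1>k})\bigr)<1$ via Lemma~\ref{lem:connRho}, which licenses replacing $Q$ by $\Qtilde=T_{1-\gamma}Q$ at cost $\epsilon/2$ (Lemma~\ref{lem:smoothing}); then one applies invariance with the \emph{already Lipschitz} functional $\Psi(u_2,\ldots,u_k)=\prod_b f_{[0,1]}(u_b)$ on $\Reals^{k-1}$, so no indicator approximation and no anti-concentration are needed; the output $f_{[0,1]}\Qtilde(G)$ is a fuzzy $[0,1]$-valued function on Gaussian space, which is rounded to a genuine set of measure $\tfrac12$ by Lemmas~\ref{lem:nonFuzziness} and~\ref{lem:almostBalanced}, and the EGT then applies to that set. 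Your level set $A$ never enters.
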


\subsubsection{Cosmic coin flipping}
In the setting of \emph{cosmic coin flipping}
as studied in~\cite{MosselODonnell:05, MORSS:06},
we have $k$ players
and a source $X \in \{-1,1\}^n$ of $n$ uniform bits.
Each player $i$ is given a noisy version $Y_i$ of $X$,
where each bit in $Y_i$ is a noisy copy
of the corresponding bit in $X$.
More specifically, given $X$, each $Y_{i,j}$ is selected independently as
\begin{equation}
  Y_{i,j} =
  \left\{
    \begin{array}{rl}
      X_j & \text{ w.p. } \frac{1+\rho}{2}
      \\
      -X_j & \text{ w.p. } \frac{1-\rho}{2}
    \end{array}
  \right.
\end{equation}
Note that
$\E[X_j Y_{i,j}] = \rho$
and
$\E[Y_{i,j} Y_{l,j}] = \rho^2$ for $i \neq l$.

The $k$ players want to use their noisy versions of $X$ to flip a balanced coin such that they all agree on an outcome with maximal probability, i.e. they want to select a balanced function $f:\{-1,1\}^n \rightarrow \{0,1\}$ maximizing
\begin{equation}
  \mathcal{P}^{(k,n)}_\rho(f) := \P(f(Y_1) = \ldots = f(Y_k))
\end{equation}
The requirement that all players must use the same function should not
be considered a restriction since, as shown in~\cite{MosselODonnell:05},
allowing each player to use a different function $f_i$ cannot increase the maximal probability of all players agreeing.

The main conjecture of~\cite{MosselODonnell:05} is that
for any fixed $k$, $n$ and $\rho$,
$\mathcal{P}^{(k,n)}_\rho(f)$ is maximized by $\MAJ_m$ for some $m \le n$.
As a consequence of the EGT we show that if $f$ is required to have low influence in each coordinate, then majority asymptotically maximizes $\mathcal{P}^{(k,n)}_\rho(f)$ for unbounded $n$,
\begin{theorem}
  \label{thm:cosmicCoin}
  For any $k \ge 1$, $\rho \in [0,1]$ and $\epsilon>0$
  there exists a $\tau(\eps,k,\rho)>0$
  such that for any balanced $f:\{-1,1\}^n \rightarrow \{0,1\}$
  satisfying $\max_i \Inf_i f \le \tau$,
  \begin{equation}
    \mathcal{P}^{(k,n)}_\rho(f)
    \le
    \lim_{n \rightarrow \infty}
    \mathcal{P}^{(k,n)}_\rho(\MAJ_n) + \epsilon
  \end{equation}
\end{theorem}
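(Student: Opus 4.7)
The plan is to decompose the agreement probability as $\mathcal{P}^{(k,n)}_\rho(f) = \P(\forall i\colon f(Y_i)=1) + \P(\forall i\colon f(Y_i)=0)$, pass from Boolean to Gaussian via the extended invariance principle, apply the EGT (Theorem~\ref{thm:nGauss2SplitIsop}) to bound each Gaussian term by its half-space counterpart, and identify the resulting upper bound with $\lim_{n\to\infty} \mathcal{P}^{(k,n)}_\rho(\MAJ_n)$ via the central limit theorem.

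For the Gaussian setup, note that across $j \in [n]$ the vectors $(Y_{1,j},\ldots,Y_{k,j})$ are i.i.d., each with $\E[Y_{i,j}]=0$ and $\E[Y_{i,j}Y_{l,j}] = \rho^2$ for $i\ne l$. The natural Gaussian analog couples $G_1,\ldots,G_k \sim \Norm(0, I_n)$ with $\Cov(G_i, G_l) = \rho^2 I_n$ for $i \ne l$. Writing $Q$ for the multilinear extension of $f$ and $\Psi_\delta$ for a Lipschitz smoothing of the indicator $1\{\forall i:\, z_i \ge 1/2\}$, I would invoke the extended invariance principle for correlated ensembles and Lipschitz test functions developed in the paper: whenever $\max_i \Inf_i f \le \tau$,
\begin{equation}
\bigl|\E\Psi_\delta(f(Y_1),\ldots,f(Y_k)) - \E\Psi_\delta(Q(G_1),\ldots,Q(G_k))\bigr| \longrightarrow 0 \ \text{ as } \tau \to 0.
\end{equation}
An anti-concentration estimate for low-influence polynomials under Gaussian measure will force $Q(G_i)$ to concentrate near $\{0,1\}$, so (taking $\delta \to 0$) the right-hand side approaches $\P(\forall i\colon G_i \in A')$ with $A' := \{g : Q(g) \ge 1/2\}$, and the same estimate pins $\P(G_1 \in A')$ within $o_\tau(1)$ of $\E[Q(G_1)] = 1/2$.

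Applying the EGT with correlation parameter $\rho^2 \in [0,1]$ separately to $A'$ and to $(A')^c$ and summing will then give, for all $\tau$ small enough,
\begin{equation}
\mathcal{P}^{(k,n)}_\rho(f) \le \P(\forall i\colon G_{i,1} \ge 0) + \P(\forall i\colon G_{i,1} \le 0) + \eps/2,
\end{equation}
where $(G_{1,1},\ldots,G_{k,1})$ is the first-coordinate marginal of $(G_1,\ldots,G_k)$, i.e.\ a multivariate Gaussian with unit variances and pairwise covariance $\rho^2$. The multivariate CLT applied to $\bigl(n^{-1/2}\sum_j Y_{i,j}\bigr)_{i=1}^k$ yields convergence in distribution to that same Gaussian, hence
\begin{equation}
\lim_{n\to\infty} \mathcal{P}^{(k,n)}_\rho(\MAJ_n) = \P(\forall i\colon G_{i,1} \ge 0) + \P(\forall i\colon G_{i,1} \le 0),
\end{equation}
and the theorem follows by choosing $\tau$ small enough.

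The hard part will be the invariance step. The MoOdOl invariance principle is formulated for scalar product measures and $\mathcal{C}^3$ test functions; the correlated-ensemble extension required here, and in particular the uniformity over general Lipschitz test functions needed to handle the sharp agreement indicator, is precisely the extension this paper announces. The accompanying quantitative anti-concentration for $Q(G)$ near $\{0,1\}$, which ensures that the Lipschitz smoothing $\Psi_\delta$ faithfully captures the agreement event and that $\P(G_1 \in A')$ is close to $1/2$, is the other technical ingredient that must be carefully assembled before the EGT and CLT steps can cleanly deliver the conclusion.
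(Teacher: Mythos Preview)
Your high-level strategy matches the paper's: decompose into the all-ones and all-zeros events, pass to Gaussian space, apply the EGT at correlation $\rho^2$, and identify the majority limit via the CLT (Lemma~\ref{lem:cosmicCoinMajEq}). The paper in fact packages the entire discrete-to-Gaussian-to-EGT pipeline as Theorem~\ref{thm:discreteEGT}, so its proof of Theorem~\ref{thm:cosmicCoin} is just two invocations of that theorem (once with $f$, once with $1-f$) after verifying the hypothesis $\rho(\Sigma(Y_{1,1}),\ldots,\Sigma(Y_{k,1});\P) < 1$ via Lemma~\ref{lem:connRho}.

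Where your bridge from invariance to the EGT diverges, there is a gap. The paper's Lipschitz invariance principle (Theorem~\ref{thm:invariancePrincipleFinal}) does not apply to $Q$ under only a low-influence assumption; it also requires the tail decay $\Var Q^{>d} \le (1-\gamma)^{2d}$, which an arbitrary Boolean $f$ need not satisfy, and without it your anti-concentration of $Q(G)$ near $\{0,1\}$ is not available either. The paper's fix, inside Theorem~\ref{thm:discreteEGT}, is to first replace $Q$ by $\widetilde Q = T_{1-\gamma} Q$ and show this is harmless via Lemma~\ref{lem:smoothing}---which is precisely where the correlation bound $\rho' < 1$ enters, a step your outline omits. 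On the Gaussian side the paper never invokes anti-concentration: it applies invariance with the already-Lipschitz test function $\Psi(x) = \prod_j f_{[0,1]}(x_j)$ and then converts the soft output $f_{[0,1]}\widetilde Q(\calG^j) \in [0,1]$ into a hard set of the required measure via Lemmas~\ref{lem:nonFuzziness} and~\ref{lem:almostBalanced}. With the smoothing inserted your anti-concentration route can also be completed, but the defuzzification approach is cleaner and directly produces what the EGT consumes.
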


\subsubsection{Plurality is Stablest}

Consider an election with $n$ voters choosing between $q$ candidates.
We call a function $f:[q]^n \rightarrow [q]$, which given the $n$ votes determines the winning candidate, {\em a social choice function}.
Letting $\Delta_q = \{x \in \Reals^q| x \ge 0, \sum_{i=1}^q x_i = 1\}$
denote the standard q-simplex,
we generalize this notion a bit and call a function $f:[q]^n
\rightarrow \Delta_q$ assigning a probability distribution to the set of candidates a {\em ``fuzzy'' social choice function.}
To be able to treat non-fuzzy social choice functions at the same time, we
will usually embed their output into $\Delta_q$ and think of them as
functions $f:[q]^n \rightarrow E_q$, where
$E_q = \{e_1, \ldots, e_q\}=\{(1,0, \dots,0), \dots, (0,\dots, 0, 1)\}$
are the $q$
extreme points of $\Delta_q$ corresponding to assigning a
probability mass $1$ to one of the candidates.

The noise stability of such functions measures the stability of the output when
the votes are chosen independently and uniformly at random, and then rerandomized with probability $1-\rho$.

\begin{definition}
  \label{def:SpQ}
  For $-\frac{1}{q-1} \le \rho \le 1$, the noise stability of
  $f:[q]^n \rightarrow \Reals^k$ is
  \begin{equation}
    \Sp (f) = \sum_{j=1}^k \E [f_j(\omega) f_j(\lambda)]
  \end{equation}
  where $\omega$ is uniformly selected from $[q]^n$ and each $\lambda_i$ is independently selected using the conditional distribution
  \begin{equation}
    \label{eq:defSpQ}
    \mu(\lambda_i|\omega_i) = \rho 1_{\{\lambda_i=\omega_i\}} + (1-\rho) \frac 1 q
  \end{equation}
\end{definition}
\noindent
Note that when $f:[q]^n \rightarrow E_q$ is a non-fuzzy social choice
function, $\Sp(f) = \P(f(\omega) = f(\lambda))$.

We say that $f:[q]^n \rightarrow \Delta_q$ is {\em balanced} if
$\E[f(\omega)]=\frac 1 q \vec{1}$ where $\omega$ is uniformly selected
from $[q]^n$ and
say that the influence of the $i$:th coordinate on
a real valued function $f:[q]^n \rightarrow
\Reals$ is
\begin{equation}
  \label{eq:influence}
  \Inf_i f(\omega)
  =
  \E_\omega [
  \Var_{\omega_i} f(\omega)
  ]
\end{equation}

Note that the definition of $\Sp$ generalizes the definition for Boolean function in Section~\ref{sec:introCondorcet} if we identify $[2]^n$ with $\{-1,1\}^n$.
Likewise, the definition of influence here generalizes the notion of influence
in Section~\ref{sec:introCondorcet} except that the earlier notion is precisely $4$ times larger. This distinction is usually not important, but when it is we will use the latter one given by \eqref{eq:influence}.

Let
$\PLUR_{n,q}:[q]^n \rightarrow \Delta_q$
denote the plurality function which assigns a probability mass $1$ to the candidate with the most votes (ties can be broken arbitrarily, e.g. by splitting the mass equally among the tied candidates).
The \textit{Plurality is Stablest} conjecture claims that plurality is essentially the most stable of all low-influence functions under uniform measure:

\begin{conjecture}[Plurality is Stablest]
  \label{conj:plurStabAppl}
  For any $q \ge 2$, $\rho \in [-\frac{1}{q-1},1]$
  and $\epsilon>0$ there exists a $\tau>0$ such that
  if $f:[q]^n \rightarrow \Delta_q$
  has $\Inf_i(f_j) \le \tau$, $\forall i,j$,
  then
  \begin{align}
    \label{eq:plurStabPos}
    &\Sp(f) \le \lim_{n \rightarrow \infty} \Sp(\PLUR_{n,q})
    + \epsilon \hspace{0.4cm} \mbox { if } \rho \ge 0
    \mbox{ and } f \mbox{ is balanced }
    \shortintertext{and}
    & \Sp(f) \ge \lim_{n \rightarrow \infty} \Sp(\PLUR_{n,q})
    - \epsilon  \hspace{0.4cm} \mbox { if } \rho \le 0
    \label{eq:plurStabNeg}
  \end{align}
\end{conjecture}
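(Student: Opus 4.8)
The plan is to deduce the Plurality is Stablest conjecture (Conjecture~\ref{conj:plurStabAppl}) from the Standard Simplex Conjecture (Conjecture~\ref{conj:2GaussQSplitIsop}) via an invariance principle. First I would observe that a function $f:[q]^n\to\Delta_q$ can be encoded by composing with the standard embedding of $[q]$ into the vertices of a regular simplex, so that each coordinate becomes a $(q-1)$-dimensional real vector with the natural product distribution; under this encoding $\Sp(f)$ becomes a sum of correlated-coordinate expectations of the form $\sum_j\E[g_j(\omega)g_j(\lambda)]$ where $g=(g_1,\dots,g_k)$ is a vector of bounded real functions on the discrete product space, and the correlation operator acting coordinatewise is exactly the one in \eqref{eq:defSpQ}. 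I would then apply the Lipschitz-continuous invariance principle developed in the paper (the generalization of \cite{MoOdOl:05,MoOdOl:09,Mossel:08}) to the rounding functional: replacing the discrete ensemble $\omega$ by a Gaussian ensemble $\mathbf{G}$ with matching first and second moments, the quantity $\Sp(f)$ is changed by at most $\eps$ provided all influences $\Inf_i(f_j)$ are at most $\tau=\tau(q,\rho,\eps)$ and we first truncate to bounded degree. The point is that the relevant functional here is $\Psi(x)=\sum_j \psi_j(x)^2$ composed with the simplex-nearest-point rounding map (or, for the fuzzy case, just the clipping of $g$ into $\Delta_q$), which is Lipschitz but not smooth, hence the need for the strengthened invariance principle rather than the $\mathcal C^3$ version.

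Next I would pass from the Gaussian side back to a genuine partition. After the invariance step, $\Sp(f)$ is approximated by $\sum_j \E[F_j(X)F_j(Y)]$ where $X,Y\sim\Norm(0,I_m)$ with $\Cov(X,Y)=\rho I_m$ (here $m=(q-1)n$) and $F:\Reals^m\to\Delta_q$ is obtained from $g$ by the same clipping/rounding; the balancedness of $f$ transfers to $\E[F(X)]=\frac1q\vec 1$ up to an error that vanishes with $\tau$. A standard convexity/extreme-point argument shows that among $\Delta_q$-valued functions with a fixed mean, the sum-of-squares stability $\sum_j\E[F_j(X)F_j(Y)]$ is maximized (for $\rho\ge0$) by a function taking values in the vertex set $E_q$, i.e.\ by an honest partition $A_1,\dots,A_q$ of $\Reals^m$ with $\P(X\in A_j)\approx\frac1q$; for such a partition, $\sum_j\E[F_j(X)F_j(Y)]=\P((X,Y)\in A_1^2\cup\dots\cup A_q^2)$, which is precisely the quantity bounded in Conjecture~\ref{conj:2GaussQSplitIsop}. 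Invoking part~(i) of the SSC gives $\Sp(f)\le\P((X,Y)\in S_1^2\cup\dots\cup S_q^2)+\eps'$ for a standard simplex partition $S_1,\dots,S_q$; for $\rho\le0$ the same chain of reductions together with part~(ii) of the SSC (which also covers unbalanced partitions) yields the reverse inequality \eqref{eq:plurStabNeg} without the balancedness hypothesis.

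Finally I would identify the right-hand side $\P((X,Y)\in S_1^2\cup\dots\cup S_q^2)$ for the standard simplex partition with $\lim_{n\to\infty}\Sp(\PLUR_{n,q})$. This is the "plurality tends to the Gaussian simplex partition" computation: the plurality function on $[q]^n$, after the simplex embedding and rescaling by $\sqrt n$, has bias vectors whose joint law converges by the multidimensional CLT to the $(q-1)$-dimensional Gaussian, and the plurality decision rule converges to the nearest-vertex (equivalently, largest-coordinate) rule, which is exactly a standard simplex partition; carrying the rerandomization through the limit shows $\lim_{n\to\infty}\Sp(\PLUR_{n,q})=\P((X,Y)\in S_1^2\cup\dots\cup S_q^2)$. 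Combining this identity with the previous paragraph, and choosing $\tau$ small enough that all accumulated errors sum to at most $\eps$, proves both \eqref{eq:plurStabPos} and \eqref{eq:plurStabNeg}.

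The main obstacle, and the place where the bulk of the technical work sits, is the invariance step: the stability functional factors through a non-smooth (merely Lipschitz) rounding map, so one cannot apply the classical $\mathcal C^3$ invariance principle of \cite{MoOdOl:05} directly, and one must instead mollify the rounding, control the mollification error uniformly, and verify that the low-influence hypothesis on the $f_j$ survives the truncation-to-bounded-degree and the vector-valued bookkeeping. A secondary but nontrivial subtlety is the "extremize to a vertex-valued function" reduction under the fixed-mean constraint together with the $\eps$-approximate balancedness, which needs a short compactness or exchange argument to avoid losing a dimension or violating the measure constraints exactly; and one must be careful that the SSC is stated for $q\le n+1$, so the Gaussian dimension $m=(q-1)n$ is comfortably large enough for the simplex partition to exist, which it is.
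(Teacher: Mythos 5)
Your high-level strategy matches the paper's: this is an argument that SSC implies Plurality is Stablest (i.e.\ Theorem~\ref{thm:plurStabAppl}), via the Lipschitz invariance principle, a passage from fuzzy Gaussian functions to genuine partitions, and a CLT identification of $\lim_n \Sp(\PLUR_{n,q})$ with the simplex partition's Gaussian stability (Lemma~\ref{lem:PLUReqConj}). The structure of the argument is right, and your observations about why the $\mathcal{C}^3$ invariance principle is insufficient and must be replaced by the Lipschitz version are exactly the ones the paper makes.

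However, the ``fuzzy to non-fuzzy'' step has a genuine gap in the $\rho\le 0$ direction. You invoke a convexity/extreme-point argument: since for $\rho\ge 0$ the functional $\Sp(F)=\sum_d \rho^d \|F^{=d}\|_2^2$ is a positive semidefinite quadratic form and hence convex in $F$, its maximum over the convex set of $\Delta_q$-valued functions of fixed mean is attained at an extreme point, i.e.\ at an $E_q$-valued function. That reasoning only produces the bound $\Sp(F)\le \Sp(g)$ for some vertex-valued $g$, which is what you need for \eqref{eq:plurStabPos}. For $\rho\le 0$, though, you need the \emph{lower} bound $\Sp(F)\ge \Sp(g)-\eps$ in order to feed into part (ii) of the SSC, and the convexity argument does not deliver that: the odd-degree Hermite coefficients $\rho^d$ are negative, so $\Sp$ is an indefinite quadratic form, neither convex nor concave, and passing to an extreme point gives no inequality in either direction. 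Saying ``the same chain of reductions'' covers $\rho\le 0$ glosses over this. The paper sidesteps the issue entirely by proving (Lemma~\ref{lem:nonFuzziness}) that every fuzzy $F:\Reals^m\to\Delta_q$ has an $E_q$-valued replacement $g$ with \emph{approximately equal} stability, $|\Sp(F)-\Sp(g)|\le\eps$, via a cube-discretization plus Scheff\'e argument that is sign-agnostic; combined with Lemma~\ref{lem:almostBalanced} to fix the mean exactly, this gives both inequality directions at once. You should replace your extreme-point step with such a two-sided approximation lemma (or prove one) to close the $\rho\le 0$ case, and you should also spell out the mean-correction step rather than just noting that $\E[F]\approx\frac1q\vec 1$ --- the SSC requires exact balance for $\rho\ge 0$, so an approximate-balance-to-exact-balance lemma is needed, not merely a remark.

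A smaller stylistic issue: the functional you feed into the invariance principle must be a Lipschitz function of the \emph{pair} of correlated ensembles, i.e.\ $\Psi(x,y)=\ip{f_{\Delta_q}(x)}{f_{\Delta_q}(y)}$ as in the paper, not a function of a single variable $x$ as written in your ``$\Psi(x)=\sum_j\psi_j(x)^2$''; the noise stability is a bilinear pairing of $Q(\calX)$ with $Q(\calY)$ where $\calX,\calY$ are the two correlated discrete ensembles, and the Gaussian replacement must preserve the joint covariance structure of the $2n(q-1)$ variables, not just the marginals.
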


The case where $q=2$, the \emph{Majority is stablest theorem}, was
proved in~\cite{MoOdOl:09}.
The question if Plurality is Stablest or not has been discussed by Khot, Kindler, O'Donnell and the second author as part of their work~\cite{KKMO:07}.
Here we conjecture that Plurality is indeed stablest.

We show that the general case follows from SSC.

\begin{theorem}
  \label{thm:plurStabAppl}
  SSC (Conj. \ref{conj:2GaussQSplitIsop}) $\Rightarrow$
  Plurality is Stablest (Conj.  \ref{conj:plurStabAppl})
\end{theorem}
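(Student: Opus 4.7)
The plan is to follow the Majority-is-Stablest reduction of Mossel-O'Donnell-Oleszkiewicz, promoted to the multi-candidate (vector-valued) setting, and to invoke the Standard Simplex Conjecture in place of Borell's theorem. Specifically, I would use the Lipschitz invariance principle developed earlier in the paper to translate the discrete stability question on $[q]^n$ into a Gaussian question on $\Reals^N$ with $N=n(q-1)$, reduce that question from fuzzy $\Delta_q$-valued functions to genuine partitions, and apply Conjecture~\ref{conj:2GaussQSplitIsop}.

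For the invariance step, identify each candidate $j\in[q]$ with the simplex vertex $a_j\in\Reals^{q-1}$ from Definition~\ref{def:standardSimplex}, so that $f:[q]^n\to\Delta_q$ becomes a function of $n$ i.i.d.\ blocks of the discrete measure on $\{a_1,\ldots,a_q\}$; this discrete measure matches the first and second moments of a $(q-1)$-dimensional standard normal. Applying the Lipschitz invariance principle coordinate-wise to $f=(f_1,\ldots,f_q)$ yields $\widetilde g:\Reals^{N}\to\Reals^q$ whose noise stability satisfies $|\Sp(\widetilde g)-\Sp(f)|\le\eps/3$ whenever the influences of $f$ lie below a threshold $\tau(\eps,q)$. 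Composing with a fixed Lipschitz retraction $\Pi:\Reals^q\to\Delta_q$ (the identity on $\Delta_q$) gives $g=\Pi\circ\widetilde g$; since $f$ is $\Delta_q$-valued to begin with, $\widetilde g$ sits close to $\Delta_q$ in $L^2$ and $|\Sp(g)-\Sp(\widetilde g)|\le\eps/3$.

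The second step is to reduce from a $\Delta_q$-valued $g$ to a genuine partition. For $\rho\ge 0$ the quadratic form $g\mapsto\Sp(g)=\sum_{j=1}^q\E[g_j(X)g_j(Y)]$ is convex in $g$ (its Hermite expansion has only nonnegative coefficients $\rho^{|\xi|}$), so its supremum over the convex set of balanced $\Delta_q$-valued maps is attained on extreme points, namely balanced indicator partitions $(\mathbf{1}_{A_1},\ldots,\mathbf{1}_{A_q})$; a Choquet-type barycenter representation makes this rigorous. Part (i) of Conjecture~\ref{conj:2GaussQSplitIsop} then bounds the stability of such a partition above by the stability of the standard simplex partition $S$. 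For $\rho\le 0$, independent random rounding $\pi(x)\sim g(x)$ produces a random partition $A(\pi)$ satisfying $\E_\pi[\Sp(\mathbf{1}_{A(\pi)})]=\Sp(g)$ (since $X\neq Y$ almost surely makes $\pi(X),\pi(Y)$ independent conditional on $g$), so some realization has $\Sp(\mathbf{1}_{A(\pi)})\le\Sp(g)$, and part (ii) of Conjecture~\ref{conj:2GaussQSplitIsop} bounds this from below by $\Sp(\mathbf{1}_S)$, with no balance requirement. A multidimensional central limit theorem applied to the coordinate sums defining $\PLUR_{n,q}$ identifies $\Sp(\mathbf{1}_S)$ with $\lim_n\Sp(\PLUR_{n,q})$, closing the argument.

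The hard part will be the rounding pipeline $\widetilde g\to g\to\mathbf{1}_{A(\pi)}$ while controlling the loss in $\Sp$ and, for $\rho\ge 0$, preserving near-balance of the resulting partition. The convexity / Choquet argument must be made rigorous on the infinite-dimensional set of fuzzy balanced partitions, for instance by first approximating $g$ by simple $\Delta_q$-valued functions on a Gaussian discretization of $\Reals^N$; the independent-rounding construction for $\rho\le 0$ requires a parallel measurability setup so that $A(\pi)$ is a well-defined random measurable partition. Together with the identification $\Sp(\mathbf{1}_S)=\lim_n\Sp(\PLUR_{n,q})$, these reductions account for most of the remaining technical work once Conjecture~\ref{conj:2GaussQSplitIsop} is granted.
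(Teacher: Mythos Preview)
Your overall architecture matches the paper's exactly: invariance principle to pass from $[q]^n$ to Gaussian space, round the resulting $\Delta_q$-valued function to an $E_q$-valued partition, apply Conjecture~\ref{conj:2GaussQSplitIsop}, and identify the simplex stability with $\lim_n\Sp(\PLUR_{n,q})$ via the central limit theorem (Lemma~\ref{lem:PLUReqConj} in the paper). Two points of divergence are worth noting.

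\emph{The rounding step.} You split into cases: for $\rho\ge 0$ a convexity/Choquet argument on the set of balanced $\Delta_q$-valued maps, for $\rho\le 0$ pointwise independent random rounding. Both ideas are morally correct, but the paper replaces them with a single device (Lemma~\ref{lem:nonFuzziness}): discretize $\Reals^N$ into $\delta$-cubes, and on each cube replace the fuzzy $g$ by an $E_q$-valued $h$ with the same conditional mean. Since the cubes are conditionally independent under $(X,Y)$ after discretization, this preserves $\Sp$ up to the discretization error, uniformly in the sign of $\rho$, and simultaneously controls $\sum_i|\E h_i-\E g_i|$. This sidesteps both the infinite-dimensional Choquet machinery and the measurability problem you flag for independent rounding (truly independent $\{U(x)\}_{x\in\Reals^N}$ do not produce measurable sample paths; fixing this by discretization is exactly Lemma~\ref{lem:nonFuzziness}). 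The balance correction you mention for $\rho\ge 0$ is then a separate one-line lemma (Lemma~\ref{lem:almostBalanced}): modify $h$ on a set of measure $\tfrac12\sum_i|\E h_i-\mu_i|$.

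\emph{The invariance/retraction step.} You apply invariance to obtain $\widetilde g$ and then retract; but the test function $\langle x,y\rangle$ is not globally Lipschitz, so Theorem~\ref{thm:invariancePrincipleFinal} does not apply to it directly. The paper avoids this by baking the retraction into the test function, taking $\Psi(x,y)=\langle f_{\Delta_q}(x),f_{\Delta_q}(y)\rangle$, which is Lipschitz by Lemma~\ref{lem:convexProjIsLipschitz}. Since $T_{1-\gamma}Q$ still takes values in $\Delta_q$ on the discrete side, $f_{\Delta_q}$ is the identity there, and the separate estimate $|\Sp(g)-\Sp(\widetilde g)|\le\eps/3$ is never needed.
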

It should be pointed out that our results imply a slightly stronger
result where the small influence requirement is replaced by a small
\emph{low-degree influence} requirement. This strengthening turns out to be
crucial to applications in hardness of approximation. 

We also show the reverse implication for $\rho \ge -\frac{1}{q-1}$,
implying that the Plurality is Stablest conjecture is equivalent to
the SSC for $\rho$ in this range.
\begin{theorem}
  \label{thm:plurStabApplRev}
  Plurality is Stablest (Conj.  \ref{conj:plurStabAppl})
  $\Rightarrow$
  SSC (Conj. \ref{conj:2GaussQSplitIsop})
  for $\rho \in [-\frac{1}{q-1},1]$
\end{theorem}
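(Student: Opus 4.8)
The plan is to establish the contrapositive direction by the standard "invariance principle" translation between the discrete cube $[q]^n$ and Gaussian space, run in reverse: assuming Plurality is Stablest, we derive SSC by approximating a hypothetical Gaussian partition that beats the standard simplex partition with a low-influence discrete function that would then beat plurality.

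First I would recall the geometric encoding of $[q]$ into $\Reals^{q-1}$: identify each symbol $a \in [q]$ with one of the vertices $a_1, \dots, a_q$ of the standard simplex (the vectors satisfying \eqref{eq:stdSimplexVecConstr}), so that a uniform $\omega_i \in [q]$ maps to a random vector with mean zero and with the correlation structure under the noise operator \eqref{eq:defSpQ} exactly matching, coordinate-wise, the Gaussian correlation $\rho$ used in SSC (this matching of first and second moments is precisely what makes the simplex vectors the right choice, and it is why the admissible range is $\rho \in [-\tfrac{1}{q-1},1]$, since $-\tfrac{1}{q-1}$ is the minimal correlation achievable). Second, given a Gaussian partition $A_1, \dots, A_q$ of $\Reals^{q-1}$ (balanced, if $\rho \ge 0$), I would smooth the indicator functions $1_{A_j}$ to Lipschitz functions $h_j$ summing to $\vec 1$, losing only $\eps$ in stability; then pull these back along a multilinear/Hermite truncation to obtain functions on $[q]^n$ of the form $g_j = h_j(\text{linear forms in the simplex-encoded }\omega)$, and finally project onto $\Delta_q$. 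The resulting $f:[q]^n \to \Delta_q$ has all low-degree influences $O(1/\sqrt n)$ small by construction (each linear form spreads mass over many coordinates), is balanced when the $A_j$ are, and by the extended (Lipschitz) invariance principle quoted in the introduction satisfies $\Sp(f) = \P((X,Y) \in \bigcup_j A_j^2) \pm \eps$. Third, I would identify $\lim_n \Sp(\PLUR_{n,q})$ with the standard-simplex-partition Gaussian quantity $\P((X,Y) \in \bigcup_j S_j^2)$ — this is itself an invariance-principle computation, already implicit in the forward Theorem~\ref{thm:plurStabAppl}, using that plurality's "winning region" in Gaussian space is exactly the standard simplex partition. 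Combining: if SSC failed, say $\P((X,Y)\in\bigcup A_j^2) > \P((X,Y)\in\bigcup S_j^2) + \delta$ for $\rho \ge 0$ balanced (and symmetrically for $\rho < 0$), then for $\eps < \delta/3$ the constructed $f$ would be a balanced, $\tau$-low-influence function with $\Sp(f) > \lim_n \Sp(\PLUR_{n,q}) + \eps$, contradicting Conjecture~\ref{conj:plurStabAppl}.

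The main obstacle I anticipate is the direction of the invariance-principle estimate together with the projection onto $\Delta_q$: the invariance principle naturally controls $\Sp$ of the \emph{smoothed} real-valued $g_j$, but we need a genuine $\Delta_q$-valued (indeed, ideally $E_q$-valued) function to plug into Plurality is Stablest, and the rounding step $g \mapsto \Pi_{\Delta_q}(g)$ must be shown not to decrease stability by more than $\eps$ and not to destroy balance or low influence. Handling this cleanly — presumably by choosing the $h_j$ to already be close to a partition of unity into $\{0,1\}$-valued indicators outside a small-measure bad set, and arguing the projection is $1$-Lipschitz hence influence-nonincreasing — is where the care is needed; the rest is bookkeeping with the two-moment matching and the quoted Lipschitz invariance principle.
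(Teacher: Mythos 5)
Your outline is essentially the paper's approach: given a Gaussian partition beating the standard simplex, manufacture a discrete, low-influence, balanced $f:[q]^m\to\Delta_q$ whose noise stability is within $\eps$ of the Gaussian stability, then invoke the identification $\lim_n\Sp(\PLUR_{n,q})=\P((X,Y)\in\bigcup_j S_j^2)$ (Lemma~\ref{lem:PLUReqConj}) to contradict Plurality is Stablest. Two points where the paper's execution is cleaner than what you anticipate, and which dissolve the ``main obstacle'' you flag. First, instead of smoothing $1_{A_j}$ by mollification and then worrying about re-projecting onto $\Delta_q$, the paper smooths with the Ornstein--Uhlenbeck operator, setting $\tilde g=U_{1-\delta}g$. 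Since $U_{1-\delta}$ is an averaging operator it automatically preserves the range $\Delta_q$, so no projection step is needed, balance is preserved exactly, and Lemma~\ref{lem:OrnsteinConvergence} controls the stability loss directly. Second, once $\tilde g$ is a fixed bounded continuous function of its $n$ Gaussian arguments, the transfer to $[q]^m$ does not require the Lipschitz invariance principle at all: the paper defines $m=rn$, sets each $V_i(\omega)$ to be a normalized sum over a block of $r$ coordinates of $1_{\{\omega_\cdot=1\}}-\tfrac1q$ (a scalar per coordinate of $\tilde g$, not a $(q-1)$-dimensional simplex encoding), checks that the two-point covariance matches $\rho$, and then applies the plain multivariate CLT plus boundedness/continuity of $\tilde g$ to conclude $\Sp(\tilde g\circ V)\to\Sp(\tilde g)$; the small-influence bound falls out of the Lipschitz constant of $\tilde g$ and the $1/\sqrt r$ scaling. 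The residual mismatch in means is fixed by modifying $f$ on a vanishing-measure set. So your plan is sound, but invoking the full Lipschitz invariance machinery and a $\Delta_q$-projection is heavier than necessary; the CLT together with Ornstein--Uhlenbeck smoothing does the whole job and avoids precisely the subtlety you were worried about.
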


It follows from calculations in~\cite{KKMO:07} that the bound
\eqref{eq:plurStabNeg} in Conjecture~\ref{conj:plurStabAppl}
holds asymptotically for $\rho = -\frac{1}{q-1}$
as $q \rightarrow \infty$, i.e.
\begin{equation}
  \label{eq:plurStabAsympt}
  \stab_{-\frac{1}{q-1}}(f)
  \ge
  (1 - o_q(1)) \cdot \lim_{n \rightarrow \infty} \Sp(\PLUR_{n,q})
  + \epsilon_q (\tau)
\end{equation}
where $\epsilon_q(\tau) \rightarrow 0$ as $\tau \rightarrow 0$.

It may be helpful to think of the theorem in terms of a
pure social choice function $f: [q]^n \to [q]$. In this case, there
are $n$ voters and each voter chooses one out of $q$ possible
candidates.
Given individual choices $x_1,\ldots,x_n$,
the winning candidate is defined to be $f(x_1,\ldots,x_n)$. In
social choice theory it is natural to restrict attention to the class
of low influence functions, where each individual voter has small
effect on the outcome. We now consider the scenario where
voters have independent and uniform preferences. Moreover, we assume
that there is a problem with the voting machines so that each vote
cast is rerandomized with probability $1-\rho$. Denoting by
$X_1,\ldots,X_n$ the intended votes and $Y_1,\ldots,Y_n$ the
registered votes, it is natural to wonder how correlated are
$f(X_1,\ldots,X_n)$ and $f(Y_1,\ldots,Y_n)$.
Theorem~\ref{thm:plurStabAppl} states
that under SSC, the maximal amount of correlation is obtained for the
plurality function if $\rho \geq  0$. The case where $\rho < 0$
corresponds to the situation where the voting machine's rerandomization
mechanism favors votes that differ from the original vote. In this
case the theorem states that plurality will have the least correlation
between the intended outcome $f(X_1,\ldots,X_n)$ and the registered
outcome $f(Y_1,\ldots,Y_n)$. In the next subsection we discuss
applications of the result for hardness of approximation.

\subsubsection{Hardness of approximating MAX-q-CUT}

For NP-hard optimization problems in theoretical computer science research is conducted to find
polynomial time approximation algorithms that are guaranteed to find a
solution with value within a certain constant of the optimal value.
Hardness of approximation results on the other hand bound the
achievable approximation constants away from $1$.
For some problems, tight hardness results have been show where the
bound matches the best known polynomial time approximation
algorithm. For instance, H{\aa}stad \cite{Hastad:97} showed that for
MAX-E3-SAT one cannot improved upon the simple randomized algorithm
picking assignments at random thus achieving an approximation ratio of
$\frac{7}{8}$.

In general, for constraint satisfaction problems (CSP's) where the
object is
to maximize the number of satisfied predicates selected from a set of
allowed predicates and applied to a given set of variables,
algorithms based on relaxations to semi-definite programming (SDP),
first introduced by Goemans and Williamson~\cite{GoemansWilliamson:95}
has proved very successful.

Still optimal hardness results are not known for many CSP's.
One promising direction forward is the Unique Games Conjecture (UGC),
a strengthened form of the PCP Theorem introduced by Khot~\cite{Khot:02}.
Although the UGC remains open, hardness results for many
problems has since been proved under the assumption of the UGC, including
optimal results for
MAX-CUT~\cite{KKMO:07,MoOdOl:09} and
VERTEX-COVER~\cite{KhotRegev:03},
and improved results for
SPARSEST-CUT~\cite{Chawla:05, KhotVishnoi:05}.
Recently Raghavendra~\cite{Raghavendra:08} showed tight hardness results
for any MAX-CSP assuming the UGC, albeit without giving explicit optimal
approximation constants.

In Appendix \ref{sec:apxAlgo}
we consider one such problem that is known to be related to Plurality is Stablest.  
In the {\em MAX-q-CUT}  or {\em the Approximate q-Coloring problem}, we are  
given a graph (possibly edge weighted)
and we seek a $q$-coloring of the vertices that
maximizes the number (or weight) of edges between differently colored vertices.

\begin{definition}
  The weighted MAX-q-CUT problem, $\calM_q(V,E,w)$, is defined on a graph
  $(V,E)$ with a weight function $w:E \rightarrow [0,1]$ assigning
  a weight to each edge.
  A q-cut $l: V \rightarrow [q]$
  is a partition of the vertices into q parts.
  The value of a q-cut $l$ is
  \begin{equation}
    \VAL_l(\calM_q) = \sum_{(u,v)\in E : l(u) \neq l(v)} w_{(u,v)}
  \end{equation}
  The value of $\calM_q$ is
  \begin{equation}
    \VAL(\calM_q) = \max_l \VAL_l(\calM_q)
  \end{equation}
\end{definition}

Frieze-Jerrum gave an explicit SDP relaxation of MAX-q-CUT (see Appendix
\ref{sec:apxAlgo}) which was rounded using the standard simplex
partition of Conjecture \ref{conj:2GaussQSplitIsop}.
In Appendix \ref{sec:apxAlgo} we show that Conjecture \ref{conj:2GaussQSplitIsop} implies that this
is optimal.
\begin{theorem}
  \label{thm:condMaxqCutResults}
  Assume Conjecture \ref{conj:2GaussQSplitIsop} and the UGC.
  Then, for any $\epsilon>0$ there exist a polynomial time algorithm
  that approximates MAX-q-CUT within $\alpha_q-\epsilon$ while
  it is NP-hard to approximate MAX-q-CUT within
  $\alpha_q+\epsilon$.
  \newline
  Here,
  \begin{equation}
    \label{eq:alphaqopt}
    \alpha_q =
    \inf_{-\frac{1}{q-1} \le \rho \le 1}
    \frac{q}{q-1}
    \frac{1-q I(\rho)}
    {1-\rho}
  \end{equation}
  where $q I(\rho)$ is the noise stability of a standard simplex
  partition $S_1, \ldots, S_q$ of $\Reals^{q-1}$, i.e.
  \begin{equation}
    q I(\rho) = \P((X,Y) \in S_1^2\cup \dots \cup S_q^2)
  \end{equation}
  where $X,Y \sim \Norm(0, I_{q-1})$ are jointly normal
  with $\Cov(X,Y)=\rho I_{q-1}$.
\end{theorem}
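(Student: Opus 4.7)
The plan is to establish matching algorithmic and hardness bounds, both keyed to the same threshold $\alpha_q$.

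For the algorithmic direction, I analyze the Frieze--Jerrum SDP relaxation rounded by a standard simplex partition. The SDP assigns to each vertex $v \in V$ a unit vector $x_v \in \Reals^{|V|}$ subject to $x_u \cdot x_v \ge -\frac{1}{q-1}$ and maximizes $\sum_{(u,v)\in E} w_{(u,v)} \cdot \frac{q-1}{q}(1 - x_u \cdot x_v)$, giving an upper bound on $\VAL(\calM_q)$. To round, draw $q-1$ independent standard Gaussian vectors $g_1,\ldots,g_{q-1} \sim \Norm(0,I_{|V|})$ and assign $v$ to the simplex cell $S_i$ containing $\Pi(v) := (g_1 \cdot x_v,\ldots,g_{q-1} \cdot x_v) \in \Reals^{q-1}$. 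For an edge $(u,v)$ with $\rho_{uv} := x_u \cdot x_v$, the pair $(\Pi(u),\Pi(v))$ is jointly normal with covariance $\rho_{uv} I_{q-1}$, so $\P(u,v\text{ same cell}) = qI(\rho_{uv})$ by the definition in the theorem. The expected cut weight is therefore $\sum_{(u,v)} w_{(u,v)}(1-qI(\rho_{uv}))$, and an edgewise comparison with the SDP objective yields an approximation ratio at least $\inf_{-\frac{1}{q-1} \le \rho \le 1} \frac{q}{q-1} \cdot \frac{1-qI(\rho)}{1-\rho} = \alpha_q$. Note that this half does not itself use SSC.

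For the matching hardness, Theorem \ref{thm:plurStabAppl} converts Conjecture \ref{conj:2GaussQSplitIsop} into the Plurality is Stablest conjecture, and in fact into its small low-degree-influence strengthening as noted after the theorem. Fix $\rho^\ast \in [-\frac{1}{q-1},0]$ attaining the infimum in \eqref{eq:alphaqopt}, and apply the standard KKMO/Raghavendra dictatorship-test template, reducing Unique Label Cover with label set $[R]$ to a weighted graph on vertex set $V \times [q]^R$. A $q$-coloring restricted to a block $\{v\} \times [q]^R$ is a social choice function $f_v : [q]^R \to [q]$; edge weights between blocks for a UGC edge $(v,v')$ are set so that the cut fraction across this pair equals $\P(f_v(x) \ne f_{v'}(y \circ \pi_{v,v'}))$ for a $\rho^\ast$-correlated pair $(x,y)$ drawn under the distribution of \eqref{eq:defSpQ}. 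Consistent dictators $f_v(x) = x_{i(v)}$ give cut fraction $\frac{q-1}{q}(1 - \rho^\ast) - o(1)$ (completeness), while if no $f_v$ has an influential low-degree coordinate the strengthened Plurality is Stablest, in its reverse direction using $\rho^\ast \le 0$, gives $\P(f_v(x) = f_v(y)) \ge qI(\rho^\ast) - \epsilon$, bounding the cut fraction by $1 - qI(\rho^\ast) + \epsilon$ (soundness). The soundness-to-completeness ratio is $\alpha_q + O(\epsilon)$, which under UGC yields the advertised NP-hardness.

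The delicate step is the soundness half: it needs the low-degree-influence version of Plurality is Stablest, which requires the Lipschitz-continuous extension of the invariance principle developed earlier in the paper in order to transfer the discrete plurality gadget to a balanced Gaussian partition where SSC applies, and it requires the reverse branch of SSC since $\rho^\ast$ lies in the negative-correlation regime. Matching the two constants exactly then reduces to the bookkeeping check that the SDP rounding, the PCP inner distribution, and the correlation parameter all refer to the same simplex geometry at the same $\rho^\ast$, so the infimum \eqref{eq:alphaqopt} is attained simultaneously by the integrality gap and by the reduction.
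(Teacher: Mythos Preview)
Your proposal is correct and follows essentially the same two-part template as the paper: the Frieze--Jerrum SDP rounded via the standard simplex partition for the algorithmic side, and a KKMO-style dictatorship-test reduction from Unique Label Cover for the hardness side, with the soundness analysis driven by the low-degree-influence form of Plurality is Stablest (equivalently, by Theorem~\ref{thm:lowInflStabToGaussStab} combined with SSC).

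One point you assert without justification is that the infimum in~\eqref{eq:alphaqopt} is attained at some $\rho^\ast\in[-\tfrac{1}{q-1},0]$. This matters: your soundness argument needs the \emph{reverse} inequality in Plurality is Stablest, which is only available for $\rho\le 0$. The paper handles this via a separate lemma showing that for any partition $g$ the ratio $\tfrac{1-\Sp(g)}{1-\rho}$ is minimized over $\rho\in[0,1]$ at $\rho=0$, so positive $\rho$ never helps; your sketch should flag that this step is needed. A second minor point: in the soundness decoding you apply Plurality is Stablest to the individual $f_v$, but the object whose influences are actually controlled in the KKMO argument is the averaged function $g_v=\E_w[\tilde f_w\circ P_{\sigma_{v,w}}]\in\Delta_q$, and it is the convexity of low-degree influence that lets you pass from $g_v$ back to an influential coordinate of some $f_w$. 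These are standard but worth making explicit.
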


We note that $\alpha_2 \approx 0.878567$ is the Goemans-Williamson
constant ~\cite{GoemansWilliamson:95}.
It is conjectured that \eqref{eq:alphaqopt} attains it minimum at
$\rho=-\frac{1}{q-1}$ for any $k \ge 3$ (but not for $k=2$).
This was verified numerically in~\cite{KlerkPW:04} for $k=3\ldots 10$,
where $\alpha_3, \ldots, \alpha_{10}$ were also computed.
For instance, $\alpha_3 \approx 0.836008$ and $\alpha_4 \approx
0.857487$.

We further comment briefly on the results of~\cite{Raghavendra:08}.
Since MAX-q-CUT is an example of MAX-CSP with a
single predicate, ~\cite{Raghavendra:08} give an
optimal approximation algorithm for MAX-q-CUT and an algorithm for
computing the optimal approximation constant.
However, the complexity of both these
algorithms depends heavily on the precision $\epsilon$.
In fact, the running time is doubly exponential in $1/\eps$.
In contrast our results (assuming the SSC) gives the optimal
approximation constant as simple optimization problem in one variable.

\subsection{Support for the SSC}
To support the Standard Simplex Conjecture we first note that it is a
natural extension of Theorem~\ref{thm:Borell}.
Moreover, by Theorem~\ref{thm:plurStabAppl} and
\ref{thm:plurStabApplRev} it is
(for $\rho \in [-\frac{1}{q-1},1]$) equivalent to the Plurality is
Stablest conjecture which is a natural extension of the Majority is
Stablest theorem. By \eqref{eq:plurStabAsympt} this extension
holds asymptotically as $q \rightarrow \infty$.
In the limit as $\rho \rightarrow 1$ further support is given by the
Double Bubble Theorem in Gaussian space as we explain next.

\subsubsection{The Double Bubble Theorem}

The famous Double Bubble Theorem~\cite{Hutchings:02} determines the minimal
area that encloses and separates two fixed volumes in
$\Reals^3$. The optimal partition is given by two spheres which intersect at an $120
\deg$ angle having a separating membrane in the plane of the
intersection. The proof of this theorem is the culmination of a long line of work answering a conjecture which was open
for more than a century.
\begin{figure}[h]
  \centering
\psset{xunit=.5pt,yunit=.5pt,runit=.5pt}
\begin{pspicture}(110.99920654,65.99921417)
{
\newrgbcolor{curcolor}{0 0 0}
\pscustom[linewidth=0.99921262,linecolor=curcolor]
{
\newpath
\moveto(78.49961,65.49960817)
\curveto(69.21159,65.49960817)(60.84703,61.54534017)(54.99961,55.21835817)
\curveto(49.21647,60.94647017)(41.27623,64.49960817)(32.49961,64.49960817)
\curveto(14.83561,64.49960817)(0.49961,50.16360817)(0.49961,32.49960817)
\curveto(0.49961,14.83560817)(14.83561,0.49960817)(32.49961,0.49960817)
\curveto(41.7733,0.49960817)(50.1222,4.47092317)(55.96836,10.78085817)
\curveto(61.75273,5.04414217)(69.71552,1.49960817)(78.49961,1.49960817)
\curveto(96.16361,1.49960817)(110.49961,15.83560817)(110.49961,33.49960817)
\curveto(110.49961,51.16360817)(96.16361,65.49960817)(78.49961,65.49960817)
\closepath
}
}
{
\newrgbcolor{curcolor}{0 0 0}
\pscustom[linewidth=1,linecolor=curcolor]
{
\newpath
\moveto(54.78077,55.42381517)
\curveto(55.35199,10.56808617)(55.35585,10.26500717)(55.35585,10.26500717)
}
}
\end{pspicture}
  \caption{A double bubble in $\Reals^2$}
\end{figure}

An analogous question can be asked in Gaussian space, $\Reals^n$ equipped
with a standard Gaussian density and the techniques and results used in the proof of the
Double Bubble Theorem allow to find the partition of $\Reals^n (n \geq 2)$ into three volumes
each having Gaussian volume~$\frac{1}{3}$  minimizing the Gaussian surface area between the three volumes.
Indeed, the results of~\cite{Corneli:08} show that the optimal partition is the Peace Sign
partition, which can be
seen as the limit of the double bubble partition scaled up around one
point on the intersection.

This indicates that the partition in Conjecture
\ref{conj:2GaussQSplitIsop}
is optimal (at least for $q=3$ when $\rho \rightarrow 1$).
Indeed Conjecture~\ref{conj:2GaussQSplitIsop} is stronger than the results of~\cite{Corneli:08}.
It is easy to see that Conjecture~\ref{conj:2GaussQSplitIsop} with
$q=3$ imply that the Peace Sign Partition is optimal
by taking the limit $\rho \to 1$ (this is done similarly to the way in which Borell's result~\cite{Borell:85} implies the classical Gaussian isoperimetric result, see Ledoux's
Saint-Flour lecture notes~\cite{DoGrLe:96}).

\subsection{Organization}
In Section~\ref{sec:preliminaries} we introduce the notation we use and
various definitions and results from previous work, while also proving
some useful properties of Gaussian noise stability.
In Section~\ref{sec:invariance} we describe the invariance principle
which is used to relate certain questions about discrete noise stability to
questions about Gaussian noise stability.
Then, in Section~\ref{sec:EGT} we prove the Exchangeable Gaussian
Theorem
and in Section~\ref{sec:lowInflBounds} we prove a general noise
stability bound for discrete low-influence functions.
The following sections treats various applications.
The first two applications are based on the EGT.
In Section~\ref{sec:appEGT} we show that the majority
function maximizes the probability of having a unique best winner in
Condorcet voting and
that majority is best for cosmic coin flipping among low-influence functions.
The next two applications are based on the Standard Simplex Conjecture.
In Section~\ref{sec:appPlur} we show that the Plurality is
Stablest Conjecture follows from the SSC - and essentially is
equivalent to the SSC.
Based on the results of Section~\ref{sec:appPlur},
we also include a proof in Appendix~\ref{sec:maxqcut} of the
optimality of the Frieze-Jerrum SDP for approximating MAX-q-CUT
given the SSC (and assuming the Unique Games Conjecture).

\section{Preliminaries}
\label{sec:preliminaries}
In this section we introduce some notation and recall various definitions
and results from~\cite{MoOdOl:09,Mossel:08}.
Furthermore, we derive some useful properties of Gaussian noise
stability in section~\ref{subsec:gaussianNoise}.

\subsection{Conventions}
To make it more clear whether we are working with functions on discrete space
$f:[q]^n \rightarrow \Reals^k$ or functions on continuous Gaussian
space $g:\Reals^n \rightarrow \Reals^k$ we will usually use $f$ to
denote discrete functions and $g$ to denote continuous functions.

For a discrete function we will write $\E f$ for $\E f(\omega)$ where
$\omega$ is uniformly selected from $[q]^n$ and
$\|f\|_2^2 = \E \ip{f(\omega)}{f(\omega)}$
and similarly for a continuous function we will write
$\E g$ for $\E g(X)$ where $X \sim \Norm(0,I_n)$
and $\|g\|_2^2 = \E \ip{g(X)}{g(X)}$.
We also say that $g \in L^2$ if $\|g\|_2 < \infty$.
\subsection{Multilinear polynomials}

Consider a product probability space
$(\Omega,\mu)=(\prod_{i=1}^n \Omega_i, \prod_{i=1}^n \mu_i)$.
We will be interested in functions
$f:\prod_{i=1}^n\Omega_i \rightarrow \Reals$
on such spaces.
For simplicity, we will assume that each $\mu_i$ as full support, i.e.
$\mu_i(\omega_i)>0, \forall \omega_i \in \Omega_i$.
Then clearly, for each coordinate $i$
we can create a (possibly orthonormal) basis of the form
\begin{equation}
  \label{eqn:calXBasis}
  \calX_i = (X_{i,0} = 1, X_{i,1}, \dots, X_{i,|\Omega_i|-1})
\end{equation}
where $E[X_{i,j}]=0$ for $j\ge1$,
for the space of functions $\Omega_i \rightarrow \Reals$.

\begin{definition}
  We call a finite sequence of (orthonormal) real-valued random variables where the first variable is the constant $1$ and the other variables have zero mean
  an (orthonormal) ensemble.
\end{definition}

Thus,
$\calX=(\calX_1,\dots,\calX_n)$
is an independent sequence of (possibly orthonormal) ensembles.
We will only be concerned with independent sequences of ensembles,
however we will not always require the ensembles to be orthonormal
Another type of ensembles are the Gaussian ensembles,
of which an independent sequence is typically denoted by
$\calZ=(\calZ_1, \dots, \calZ_n)$ where
$\calZ_i = (Z_{i,0} = 1, Z_{i,1}, \dots, Z_{i,m_i})$
and each $Z_{i,j}$ is a standard Gaussian variable.

\begin{definition}
  A multi-index $\sigma$ is a sequence of numbers
  $(\sigma_1, \dots, \sigma_n)$ such that $\sigma_i \ge 0, \forall i$.
  The degree $|\sigma|$ of $\sigma$ is $\left|\{i \in [n]:\sigma_i>0\}\right|$.
  Given a set of indeterminates $\{x_{i,j}\}_{i\in[n], 0 \le j \le m_i}$,
  let $x_\sigma=\prod_{i=1}^n x_{i,\sigma_i}$.
  A multilinear polynomial over such a set of indeterminates is an expression
  $Q(x)=\sum_\sigma c_\sigma x_\sigma$
  where $c_\sigma \in \Reals$ are constants.
\end{definition}

Continuing from \eqref{eqn:calXBasis} and
letting $X_\sigma = \prod_{i=1}^n X_{i,\sigma_i}$ it should be clear that
$\{X_\sigma\}$  forms a basis for functions
$\prod_{i=1}^n\Omega_i \rightarrow \Reals$, hence any function
$f:\prod_{i=1}^n\Omega_i \rightarrow \Reals$
can be expressed as a multilinear polynomial $Q$ over $\calX$:
\begin{equation}
  \label{eq:fQ}
  f(\omega_1,\dots,\omega_n)
  = Q(\calX_1, \dots, \calX_n)
  = \sum_\sigma c_\sigma X_\sigma
\end{equation}

\begin{definition}
  The degree of a multilinear polynomial $Q$ is
  \begin{equation}
    \deg Q = \max_{\sigma:c_\sigma \neq 0} |\sigma|
  \end{equation}
  We will also use the notation $Q^{\le d}$ to denote the truncated multilinear polynomial
  \begin{equation}
    Q^{\le d}(x)=\sum_{\sigma:|\sigma|\le d} c_\sigma x_\sigma
  \end{equation}
  and the analogous for $Q^{=d}$ and $Q^{> d}$.
\end{definition}

\begin{definition}
  Given a multilinear polynomial $Q$ over an independent sequence of ensembles
  $\calX=(\calX_1, \dots, \calX_n)$,
  the influence of the $i$:th coordinate on $Q(\calX)$ is
  \begin{equation}
    \Inf_i Q(\calX)
    =
    \E\left[
      \Var[Q(\calX)|\calX_1, \dots, \calX_{i-1}, \calX_{i+1}, \dots
      \calX_{n}]
    \right]
  \end{equation}
  We also define the $d$-degree influence of the $i$:th coordinate as
  \begin{equation}
    \Inf_i^{\le d} Q(\calX)
    =
    \Inf_i Q^{\le d}(\calX)
  \end{equation}
\end{definition}

Note that neither the degree nor influences of $Q(\calX)$ depends on
the actual basis selected in \eqref{eqn:calXBasis}, hence we can write
$\deg f = \deg Q$,
$\Inf_i f = \Inf_iQ(\calX)$
and
$\Infd_i f = \Inf_iQ^{\le d}(\calX)$.

\subsection{Bonami-Beckner noise}

Let us first define the Bonami-Beckner noise operator.
\begin{definition}
  \label{def:Tp}
  Let
  $(\Omega,\mu)=(\prod_{i=1}^n \Omega_i, \prod_{i=1}^n \mu_i)$.
  be a finite product probability space
  and $\alpha$ the minimum probability of any atom in any $\Omega_i$.
  For $-\frac{\alpha}{1-\alpha} \le \rho \le 1$
  the Bonami-Beckner operator on functions
  $f:\prod_{i=1}^n\Omega_i \rightarrow \Reals^k$ is defined by
  \begin{equation}
    T_\rho f(\omega_1, \dots, \omega_n) =
    \E[f(\lambda_1, \dots, \lambda_n)| \omega_1, \dots \omega_n]
  \end{equation}
  where each $\lambda_i$ is independently selected from the conditional distribution
  \begin{equation}
    \mu_i(\lambda_i|\omega_i)=\rho 1_{\{\lambda_i=\omega_i\}} + (1-\rho)\mu_i(\lambda_i)
  \end{equation}
\end{definition}

For $\rho \in [0,1]$ this is equivalent to $T_\rho f$ being the expected value of $f$ when each coordinate independently is rerandomized with probability $1-\rho$.

\subsection{Orthonormal ensembles}
Most of the time we will work with \emph{orthonormal} ensembles.
Using independence and linearity of expectation it is easy to see that
if $Q(\calX) = \sum_\sigma c_\sigma \calX_\sigma$ is a multilinear polynomial over an independent sequence of \emph{orthonormal} ensembles, then

\begin{eqnarray}
  \label{eq:fourierExpressions}
  \begin{aligned}
    &\E[Q(\calX)] = c_{\vec{0}} \text{ ; } &\Var[Q(\calX)] =
    \sum_{\sigma : |\sigma|>0} c_\sigma^2 \text{ ; } &\Inf_i Q(\calX) =
    \sum_{\sigma : \sigma_i>0} c_\sigma^2
    \\
    &\E[Q(\calX)^2] = \sum_{\sigma} c_\sigma^2 \text{ ; }& T_\rho
    Q(\calX) = \sum_\sigma \rho^{|\sigma|} c_\sigma X_\sigma \text{
      ; }
    & \Infd_i Q(\calX) = \sum_{\sigma : \left\{\substack{\sigma_i>0 \\ |\sigma| \le d}\right.} c_\sigma^2
  \end{aligned}
\end{eqnarray}

Combining these expressions it is also easy to see that
$\Infd_i f$ is convex in $f$ and satisfies the
following bound on the sum of low-degree influences:
\begin{equation}
  \label{eq:dInflBound}
  \sum_{i=1}^n \Infd_i f
  \le
  d \Var f
\end{equation}

\subsection{Vector-valued functions}
Since we will work extensively with vector-valued functions we make
the following definitions:
\begin{definition}
  For a vector-valued function $f=(f_1, \dots, f_k)$, let
  \begin{equation}
    \Var f
    =
    \sum_{j=1}^k \Var f_j
    \text{ , }\,\,
    \Inf_i f
    =
    \sum_{j=1}^k \Inf_i f_j
  \end{equation}
  and similarly for $\Infd_i$.
\end{definition}

Thus \eqref{eq:dInflBound} holds even for vector-valued $f$.
Also, all expressions in
\eqref{eq:fourierExpressions} hold for vector-valued multilinear
polynomials $Q(\calX) = \sum_\sigma c_\sigma \calX_\sigma$, where
$c_\sigma \in \Reals^k$ and $\calX$ is an independent sequence of
\emph{orthonormal} ensembles, if we replace $c_\sigma^2$ with
$\|c_\sigma\|_2^2$ and $\E[Q(\calX)^2]$ by
$\|Q(\calX)\|^2_2$.

Finally, by expressing functions
$f:[q]^n \rightarrow \Reals^k$
under the uniform measure on the input space $[q]^n$ as a multilinear
polynomial
\begin{equation}
  f(\omega) = \sum_\sigma c_\sigma \prod_{i=1}^n X_{i,\sigma_i} (\omega_i)
\end{equation}
this lets us express the noise stability of Definition \ref{def:SpQ} as
\begin{equation}
  \label{eq:SpFourier}
  \Sp(f)
  =
  \E[\ip{f}{T_\rho f}]
  =
  \sum_{\sigma} \rho^{|\sigma|} \|c_\sigma\|_2^2
\end{equation}

\subsection{Correlated probability spaces}
It will be important for us to bound the effect of the Bonami-Beckner noise operator on functions on correlated probability spaces.
\begin{definition}
  Let $(\Omega_1 \times \Omega_2, \mu)$ be a correlated probability space. The correlation between $\Omega_1$ and $\Omega_2$ with respect to $\mu$ is then
  \begin{equation}
    \rho(\Omega_1, \Omega_2; \mu) = \sup_{f_i:\Omega_i \rightarrow \Reals, \Var f_i=1} \Cov(f_1(\omega_1), f_2(\omega_2))
  \end{equation}
  For $(\Omega_1 \times \dots \times \Omega_k, \mu)$ we let
  \begin{equation}
    \rho(\Omega_1, \dots, \Omega_k; \mu) = \max_{1\le i \le k} \rho
    \left(
      \Omega_i, \prod_{j \neq i} \Omega_j; \mu
    \right)
  \end{equation}
\end{definition}

The following lemma shows that the expected value of products of functions where corresponding coordinates form correlated probability spaces does not change by much when some small noise is applied to each coordinate:

\begin{lemma}\cite[Lemma 6.2]{Mossel:08}
  \label{lem:smoothing}
  Let $(\prod_{i=1}^n \Omega_i, \prod_{i=1}^n \mu_i)$
  be a finite product probability space where
  $\Omega_i=(\Omega_i^1, \dots, \Omega_i^k)$
  are correlated probability spaces with
  $\rho(\Omega_i^1, \dots, \Omega_i^k; \mu_i) \le \rho < 1$. 
  For $j=1\ldots k$,
  let $\calX^j = (\calX_1^j, \dots, \calX_n^j)$
  be an independent sequence of orthonormal ensembles such that $\calX_i^j$
  forms a basis for functions $\Omega_i^j \rightarrow \Reals$
  and
  $Q_1, \dots, Q_k$ multilinear polynomials bounded by
  $|Q_j(\calX^j)| \le 1$.
  Then, for all $\epsilon>0$ there exists a $\gamma=\gamma(\epsilon,\rho)>0$ such that
  \begin{equation}
    \left|
      \E \prod_{j=1}^k Q_j(\calX^j)
      -
      \E \prod_{j=1}^k T_{1-\gamma} Q_j(\calX^j)
    \right|
    \le \epsilon \cdot k
  \end{equation}
\end{lemma}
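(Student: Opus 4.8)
The plan is to reduce to the case $k=2$ by a hybrid/telescoping argument, and then to prove the two-function statement using a spectral-gap argument on the correlated spaces together with the hypercontractivity (Bonami--Beckner) inequality. First I would set up the telescope: write
\[
\E\prod_{j=1}^k Q_j - \E\prod_{j=1}^k T_{1-\gamma}Q_j
= \sum_{l=1}^k \E\Bigl[\bigl(Q_l - T_{1-\gamma}Q_l\bigr)\prod_{j<l} T_{1-\gamma}Q_j \prod_{j>l} Q_j\Bigr],
\]
so it suffices to bound each summand by $\epsilon$. Fixing $l$, group all the other coordinates: the product $\prod_{j\neq l}(\cdot)$ is a function on $\prod_i \prod_{j\neq l}\Omega_i^j$, which is bounded by $1$ in absolute value, and corresponding coordinates of $\Omega_i^l$ and $\prod_{j\neq l}\Omega_i^j$ form a correlated pair with correlation at most $\rho<1$ by the definition of $\rho(\Omega_i^1,\dots,\Omega_i^k;\mu_i)$. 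So the whole problem reduces to: given two correlated product spaces with per-coordinate correlation $\le\rho<1$ and bounded multilinear polynomials $P$ (on the first) and $R$ (on the second), show $|\E[(P-T_{1-\gamma}P)\,R]|$ is small.

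For this two-function estimate I would expand $P = \sum_\sigma c_\sigma X_\sigma$ in the orthonormal basis, so $P - T_{1-\gamma}P = \sum_\sigma (1-(1-\gamma)^{|\sigma|})c_\sigma X_\sigma$. Split into low and high degree at a threshold $d$ to be chosen. For the low-degree part $|\sigma|\le d$, the coefficient $1-(1-\gamma)^{|\sigma|}\le 1-(1-\gamma)^d$ is uniformly small once $\gamma$ is small relative to $d$, and since $P$ (hence its low-degree truncation, by the standard fact that truncation is a contraction in $L^2$) has bounded norm, Cauchy--Schwarz against $R$ (also bounded) gives a bound $O(d\gamma)$ times a constant. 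For the high-degree part $|\sigma|>d$, I would use the key property of correlated spaces: the operator sending a function on one space to its conditional expectation on the other contracts the degree-$m$ part by a factor $\rho^m$ (this is the multi-dimensional analogue of $T_\rho$ acting across the correlated pair; it follows from the $\rho$-correlation bound applied coordinatewise — this is exactly the mechanism behind Lemma 6.2 of \cite{Mossel:08} and the invariance-principle machinery). Hence $|\E[P^{>d}\cdot R]| = |\E[P^{>d}\cdot(\text{cross-}T_\rho R)]|$-type manipulations, combined with hypercontractivity to control $\|P^{>d}\|_4$ or to absorb the low-atom-probability factor $\alpha$, yield a bound that decays like $\rho^{d}$ (up to constants depending on $\alpha$, which is itself controlled since $\rho \ge -\alpha/(1-\alpha)$ forces a relation, or simply treated as part of the fixed data). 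Choosing $d = d(\epsilon,\rho)$ large enough to make the high-degree term $\le \epsilon/2$ and then $\gamma = \gamma(\epsilon,\rho,d)$ small enough to make the low-degree term $\le \epsilon/2$ finishes each summand, and summing over $l$ gives the claimed $\epsilon\cdot k$.

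The main obstacle is the high-degree estimate across the correlated spaces: one needs the fact that correlation $\rho<1$ between $\Omega_i^1$ and $\prod_{j\neq i}\Omega_i^j$ forces a genuine spectral decay, so that high-degree Fourier mass contributes negligibly to the cross-expectation. Making this rigorous requires the tensorization of the correlation (Gaussian/discrete analogues of the fact that correlation multiplies under products) and a hypercontractive input to handle the fact that $P^{>d}$, while bounded in $L^2$, could a priori have large pointwise values; bounding its $L^4$ norm via Bonami--Beckner, at the cost of a constant depending only on $\rho$ and the atom size $\alpha$, is the technical heart. Since this is precisely the content cited as \cite[Lemma 6.2]{Mossel:08}, I would either invoke it directly in the two-function form and only do the telescoping reduction here, or reprove it along the lines sketched above; the telescoping reduction itself is routine.
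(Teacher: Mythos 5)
The paper does not prove this lemma at all; it is imported verbatim as \cite[Lemma 6.2]{Mossel:08}, so there is no ``paper's own proof'' to compare against. Evaluating your proposal on its own terms: the high-level plan is the right one, and it is essentially the argument in \cite{Mossel:08}. The telescoping step is correct, as is the observation that the hypothesis $\rho(\Omega_i^1,\dots,\Omega_i^k;\mu_i)\le\rho$ is \emph{precisely} the statement that each $\Omega_i^l$ has correlation at most $\rho$ with the product of the others, which is what the two-function step needs.

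However, the technical heart as you describe it is heavier than necessary. No hypercontractivity, no $L^4$ norm, and no dependence on the atom probability $\alpha$ enter the actual argument (indeed the conclusion depends only on $\rho$, which is already a sign that $\alpha$ should not appear). Once you condition: let $S$ be the conditional expectation, with respect to the coordinate-wise measure, of the bounded function $R$ on $\prod_i \Omega_i^l$. The key fact (tensorized correlation bound) is that $\|S^{=m}\|_2 \le \rho^m \|R^{=m}\|_2 \le \rho^m$. Then
\[
\bigl|\E\bigl[(Q_l - T_{1-\gamma}Q_l)R\bigr]\bigr|
= \bigl|\E\bigl[(Q_l - T_{1-\gamma}Q_l)S\bigr]\bigr|
\le \sum_{m\ge 1}\bigl(1-(1-\gamma)^m\bigr)\,\|Q_l^{=m}\|_2\,\rho^m,
\]
and Cauchy--Schwarz over $m$ together with $\sum_m\|Q_l^{=m}\|_2^2\le 1$ bounds this by $\bigl(\sum_m(1-(1-\gamma)^m)^2\rho^{2m}\bigr)^{1/2}$, which tends to $0$ as $\gamma\to 0$ by dominated convergence (geometric majorant since $\rho<1$). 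Splitting into low/high degree as you propose gives the same conclusion with an explicit $\gamma(\epsilon,\rho)$: the high part is $\le\rho^{d+1}/\sqrt{1-\rho^2}$ independently of $\gamma$, and the low part is $\le d\gamma$. You should drop the hypercontractivity step; it appears to be a conflation with the invariance principle proof, where hypercontractivity genuinely is needed, and inserting it here would introduce an $\alpha$-dependence that the lemma does not have and should not have.
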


To verify the assumption
$\rho(\Omega_i^1, \dots, \Omega_i^k; \mu_i) < 1$ the following lemma
is useful:

\begin{lemma}\cite[Lemma 2.9]{Mossel:08}
  \label{lem:connRho}
  Let $(\Omega_1 \times \Omega_2, \mu)$ be a correlated probability
  space such that $\mu(\omega_1,\omega_2) \ge \alpha$ or
  $\mu(\omega_1,\omega_2)=0$
  for all $\omega_1,\omega_2$.
  Define a bipartite graph
  $G=(\Omega_1 \cup \Omega_2, E)$ where
  $(a,b) \in E$ if $\mu(a,b) > 0$. Then, if $G$ is connected, then
  \begin{equation}
    \rho(\Omega_1,\Omega_2; \mu) \le 1 -\alpha^2/2
  \end{equation}
\end{lemma}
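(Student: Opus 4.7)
Since covariance is translation-invariant, I would assume without loss of generality that $\E f_1 = \E f_2 = 0$. With $\Var f_1 = \Var f_2 = 1$, the identity $\E_\mu[(f_1(\omega_1) - f_2(\omega_2))^2] = \Var f_1 + \Var f_2 - 2\Cov(f_1, f_2)$ gives $\Cov(f_1, f_2) = 1 - D/2$, where $D := \E_\mu[(f_1(\omega_1) - f_2(\omega_2))^2]$. The lemma thus reduces to showing $D \ge \alpha^2$ for every centered unit-variance pair.

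The atom lower bound immediately yields
\begin{equation}
D = \sum_{(a,b) \in E}\mu(a,b)(f_1(a) - f_2(b))^2 \ge \alpha \sum_{(a,b) \in E}(f_1(a) - f_2(b))^2,
\end{equation}
so it suffices to prove the edge sum on the right is at least $\alpha$. I would combine $f_1, f_2$ into a single function $F:V(G) \to \Reals$ on the disjoint union $V(G) = \Omega_1 \sqcup \Omega_2$ with $F|_{\Omega_i} = f_i$, and define a probability measure $\nu$ on $V(G)$ by $\nu(a) = \mu_1(a)/2$ for $a \in \Omega_1$ and $\nu(b) = \mu_2(b)/2$ for $b \in \Omega_2$. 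Then $\E_\nu F = 0$, $\Var_\nu F = (\Var f_1 + \Var f_2)/2 = 1$, and since every positive atom of $\mu$ is at least $\alpha$, every support vertex has $\nu$-mass at least $\alpha/2$, forcing $|V(G)| \le 2/\alpha$.

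For the final step, I would pick any spanning tree $T$ of $G$ and let $\gamma_{uv}$ denote the unique tree path from $u$ to $v$. By Cauchy--Schwarz, $(F(u) - F(v))^2 \le |\gamma_{uv}|\sum_{(a,b) \in \gamma_{uv}}(F(a) - F(b))^2$. Using $\Var_\nu F = \frac{1}{2}\sum_{u,v}\nu(u)\nu(v)(F(u) - F(v))^2$, the bound $|\gamma_{uv}| \le |V(G)| - 1 \le 2/\alpha$, and $\sum_{u,v}\nu(u)\nu(v) = 1$, I would obtain
\begin{equation}
\Var_\nu F \le \frac{1}{\alpha}\sum_{(a,b) \in E}(f_1(a) - f_2(b))^2,
\end{equation}
whence the edge sum is at least $\alpha$, giving $D \ge \alpha^2$ and $\rho \le 1 - \alpha^2/2$.

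The main obstacle is the final Poincar\'e step: obtaining the precise constant $\alpha^2/2$ (rather than merely $\Theta(\alpha^2)$) rests on the tight interplay between the atom lower bound $\mu \ge \alpha$, which controls $D$ by the edge sum, and the vertex-weight bound $\nu \ge \alpha/2$, which in turn controls $|V(G)|$ and hence the tree diameter; any looser tracking of paths or congestion would only yield $\rho \le 1 - c\alpha^2$ for some unspecified constant $c > 0$.
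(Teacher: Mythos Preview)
The paper does not actually prove this lemma; it is quoted verbatim from \cite[Lemma~2.9]{Mossel:08} and used as a black box, so there is no in-paper argument to compare against. That said, your proof is correct and self-contained. The reduction $\Cov(f_1,f_2)=1-D/2$ is standard, the atom bound $D\ge\alpha\sum_{(a,b)\in E}(f_1(a)-f_2(b))^2$ is immediate, and your Poincar\'e-type step is valid: bounding the path length by $|V(G)|-1\le 2/\alpha$ via the marginal lower bound $\nu\ge\alpha/2$, then crudely replacing $\sum_{(a,b)\in\gamma_{uv}}$ by $\sum_{(a,b)\in E}$ so that the weight $\sum_{u,v}\nu(u)\nu(v)=1$ factors out, yields exactly $\Var_\nu F\le\alpha^{-1}\sum_{(a,b)\in E}(f_1(a)-f_2(b))^2$ and hence $D\ge\alpha^2$. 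The constant $1-\alpha^2/2$ falls out precisely as claimed, with no loss in the chain of inequalities beyond what is needed.
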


\subsection{Gaussian noise stability}
\label{subsec:gaussianNoise}

\begin{definition}
  For $\rho \in [-1,1]$,
  the Ornstein-Uhlenbeck operator $U_\rho$ is defined on functions
  $g:\Reals^n \rightarrow \Reals^k$ such that
  $g \in L^2$,
  where $X \sim \Norm(0, I_n)$,
  by
  \begin{equation}
    U_\rho g(x) = \E \left[ g(\rho x + \sqrt{1-\rho^2} \xi) \right]
  \end{equation}
  where $\xi \sim \Norm(0, I_n)$.
\end{definition}

It is easy to see that if $\calZ = (\calZ_1, \dots, \calZ_n)$ is a Gaussian sequence of independent ensembles and $Q(\calZ) = \sum_\sigma c_\sigma \calZ_\sigma$, then
\begin{equation}
  U_\rho Q(\calZ) = \sum_\sigma \rho^{|\sigma|} c_\sigma \calZ_\sigma
\end{equation}
Thus $U_\rho$ and $T_\rho$ acts identically on multi-linear polynomials over Gaussian sequences of independent ensembles.

Analogous to the expression
\eqref{eq:SpFourier}
of discrete noise stability in terms of the Bonami-Beckner operator,
we define the Gaussian noise stability in terms of the
Ornstein-Uhlenbeck operator,
\begin{definition}
  For any $g:\Reals^n \rightarrow \Delta_q$, let
  \begin{equation}
    \Sp(g) = \E[\ip{g}{U_\rho g}]
  \end{equation}
\end{definition}
\noindent
Note that we use the same notation $\Sp$ for both discrete and
Gaussian noise stability. The intended kind of noise should always be
clear from the context.

A convenient property of the Ornstein-Uhlenbeck operator is that it
creates continuous functions. The following result is well known:
\begin{lemma}
  \label{lem:OrnsteinSmooth}
  For any $\rho \in (-1,1)$ and
  $g:\Reals^n \rightarrow \Delta_q$,
  $U_\rho g (x)$ is continuous in $x$.
\end{lemma}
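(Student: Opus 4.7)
The plan is to reduce the claim to a dominated-convergence argument after moving the $x$-dependence out of the (possibly discontinuous) integrand $g$ and into the Gaussian kernel, which is smooth in $x$. Concretely, since $g$ takes values in $\Delta_q$ it is bounded (each coordinate lies in $[0,1]$), so every coordinate of $g$ is a bounded measurable function on $\Reals^n$, and it suffices to prove continuity coordinate-wise; thus I may assume $g:\Reals^n\to[0,1]$.

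First I would write the definition of $U_\rho$ as a Gaussian integral: letting $\phi(y)=(2\pi)^{-n/2}\exp(-\|y\|^2/2)$ denote the standard Gaussian density on $\Reals^n$,
\begin{equation}
U_\rho g(x)=\int_{\Reals^n} g\bigl(\rho x+\sqrt{1-\rho^2}\,y\bigr)\,\phi(y)\,dy.
\end{equation}
Because $|\rho|<1$, I can substitute $z=\rho x+\sqrt{1-\rho^2}\,y$, which yields
\begin{equation}
U_\rho g(x)=\frac{1}{(1-\rho^2)^{n/2}}\int_{\Reals^n} g(z)\,\phi\!\left(\frac{z-\rho x}{\sqrt{1-\rho^2}}\right)dz.
\end{equation}
Now the $x$-dependence appears only inside the Gaussian factor, which is jointly smooth in $(x,z)$.

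Next, given any sequence $x_m\to x$ in $\Reals^n$, I would apply the dominated convergence theorem to pass the limit inside the integral. Pointwise convergence of the integrand is immediate from continuity of the kernel in $x$. For the dominating function, I would restrict attention to $x_m$ lying in a fixed ball $B_R(x)$; on this ball one has the uniform bound
\begin{equation}
\phi\!\left(\frac{z-\rho x_m}{\sqrt{1-\rho^2}}\right)\le C_{R,\rho}\,\phi\!\left(\frac{z}{2\sqrt{1-\rho^2}}\right)
\end{equation}
for all sufficiently large $\|z\|$, after absorbing the cross terms $\langle z,\rho x_m\rangle$ into the Gaussian exponent using $|\rho x_m|\le |\rho|(\|x\|+R)$; on a bounded region in $z$ the kernel is trivially bounded. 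Since $g$ is bounded by $1$, the resulting majorant is an integrable multiple of a Gaussian, independent of $m$. Hence $U_\rho g(x_m)\to U_\rho g(x)$, giving continuity.

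The main (and only) technical point is the uniform Gaussian domination on compact sets of $x$, which is just an elementary quadratic-completion estimate; once this is in place the rest is routine. I do not expect any genuine obstacle since $|\rho|<1$ ensures that $(1-\rho^2)^{-n/2}$ and the shifted Gaussian kernel are all finite and well-behaved; the hypothesis $\rho\in(-1,1)$ is exactly what prevents the substitution from degenerating.
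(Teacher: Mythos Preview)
Your argument is correct: after the substitution $z=\rho x+\sqrt{1-\rho^2}\,y$, the operator becomes convolution against the Gaussian density of $N(\rho x,(1-\rho^2)I_n)$, and dominated convergence applies because a shifted Gaussian with mean in a fixed ball is uniformly majorized by an integrable Gaussian (your tail bound plus the trivial bound on a compact set in $z$).

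The paper takes a different route. Rather than changing variables, it keeps the representation $U_\rho g(x)=\E g(U)$ with $U\sim N(\rho x,(1-\rho^2)I_n)$ and bounds $\|U_\rho g(x)-U_\rho g(y)\|$ by the total variation distance between $N(\rho x,(1-\rho^2)I_n)$ and $N(\rho y,(1-\rho^2)I_n)$, using that $g$ is bounded and a coupling argument. This is a more probabilistic viewpoint and in principle yields a quantitative modulus of continuity, whereas your convolution/DCT argument is the classical analytic proof that convolution with an $L^1$ kernel depending smoothly on a parameter is continuous in that parameter. Your approach is arguably more direct and avoids any explicit TV computation; the paper's approach would generalize more readily to settings where one only has a coupling rather than an explicit density.
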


\begin{proof}
  \begin{equation*}
    \| U_\rho g (x) - U_\rho g (y) \|_2
    =
    \| \E g(U) -  \E g(V) \|_2
  \end{equation*}
  where
  $U=\rho x + \sqrt{1-\rho^2} \xi_x$
  ,
  $V=\rho y + \sqrt{1-\rho^2} \xi_y$
  and $\xi_x, \xi_y \sim \Norm(0,I_n)$.

  First note that if $X \in \Norm(\mu,1)$ and $Y \in \Norm(-\mu,1)$,
  then the total variation distance between $X$ and $Y$ is
  \begin{align*}
    d_{TV}(X,Y)
    & =
    \frac{1}{2\sqrt{2\pi}} \int_{\Reals}
    \left|
      e^{-\frac{(x+\mu)^2}{2}}
      -
      e^{-\frac{(x-\mu)^2}{2}}
    \right|
    dx
    \\
    & =
    \frac{1}{2\sqrt{2\pi}} \int_{\Reals}
    e^{-\frac{x^2}{2}-\frac{\mu^2}{2}}
    \left|
      e^{-x \mu}
      -
      e^{x \mu}
    \right|
    dx
    \le
    \frac{1}{2\sqrt{2\pi}}
    \int_{\Reals}
    e^{-\frac{x^2}{2}}
    2 e^{x \mu}
    dx
    \\
    & \le
    \E_{Z \in \Norm(0,1)} [e^{Z \mu}]
    =
    e^{\frac{\mu^2}{2}}
  \end{align*}
  Hence,
  \begin{equation*}
    d_{TV}(U_i,V_i)
    =
    d_{TV}
    \left(
      \frac{U_i-\rho\frac{x_i+y_i}{2}}{\sqrt{1-\rho^2}}
      ,
      \frac{V_i-\rho\frac{x_i+y_i}{2}}{\sqrt{1-\rho^2}}
    \right)
    \le
    e^{\rho^2\frac{(x_i-y_i)^2}{8}}
  \end{equation*}
  and
  \begin{equation*}
    d_{TV}(U,V)
    \le
    \sum_{i=1}^n
    e^{\rho^2\frac{(x_i-y_i)^2}{8}}
    \rightarrow 0
    \text{ as }
    \|x-y\|_2 \rightarrow 0
  \end{equation*}
  Since we can couple $U$ and $V$ such that they are equal except
  with probability $d_{TV}(U,V)$, we have
  $
  \| \E g(U) - \E g(V) \|_2
    \rightarrow 0
    \text{ as }
    \|x-y\|_2 \rightarrow 0
  $
  as needed.
\end{proof}

Also, applying some small noise will not affect the noise stability
much,
\begin{lemma}
  \label{lem:OrnsteinConvergence}
  For $\rho \in [-1,1]$
  and
  $g:\Reals^n \rightarrow \Delta_q$,
  \begin{equation}
    \Sp(U_{1-\epsilon} g) \rightarrow \Sp(g)
    \text{ as }
    \epsilon \rightarrow 0
  \end{equation}
\end{lemma}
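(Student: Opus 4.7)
The plan is to diagonalize the operator $U_\rho$ in the Hermite polynomial basis and reduce the statement to a one-line dominated convergence argument. Since $g$ takes values in $\Delta_q \subseteq [0,1]^q$, it is bounded (with $\|g(x)\|_2 \le 1$ pointwise), and hence $g \in L^2$.

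First, I would expand each component of $g$ in the orthonormal basis of Hermite polynomials $\{H_\sigma\}_\sigma$ on $\Reals^n$ with respect to the standard Gaussian measure, indexed by multi-indices $\sigma \in \mathbb{Z}_{\ge 0}^n$. This gives $g = \sum_\sigma \hat g(\sigma) H_\sigma$ with $\hat g(\sigma) \in \Reals^q$, and Parseval's identity yields $\sum_\sigma \|\hat g(\sigma)\|_2^2 = \|g\|_2^2 < \infty$. Next I would invoke the standard fact that each Hermite polynomial is an eigenfunction of the Ornstein--Uhlenbeck semigroup: $U_\rho H_\sigma = \rho^{|\sigma|} H_\sigma$. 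Combined with the self-adjointness of $U_\rho$ on $L^2$ (equivalently, with the semigroup identity $U_\alpha U_\beta = U_{\alpha\beta}$), one computes
\begin{equation}
  \Sp(U_{1-\epsilon} g)
  = \E[\ip{U_{1-\epsilon} g}{U_\rho U_{1-\epsilon} g}]
  = \sum_\sigma (1-\epsilon)^{2|\sigma|} \rho^{|\sigma|} \|\hat g(\sigma)\|_2^2,
\end{equation}
while the analogous expansion for $\Sp(g)$ has the factor $(1-\epsilon)^{2|\sigma|}$ replaced by $1$.

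Finally I would apply dominated convergence over the index set of multi-indices: as $\epsilon \to 0$, each summand $(1-\epsilon)^{2|\sigma|}\rho^{|\sigma|}\|\hat g(\sigma)\|_2^2$ converges to $\rho^{|\sigma|}\|\hat g(\sigma)\|_2^2$ and is dominated in absolute value by $\|\hat g(\sigma)\|_2^2$, which is summable by Parseval. This yields $\Sp(U_{1-\epsilon} g) \to \Sp(g)$, finishing the argument. There is no serious obstacle: the only point requiring a moment's thought is the justification that $g \in L^2$ so that the Hermite expansion is valid, which is immediate from the boundedness of $\Delta_q$. An alternative route that avoids explicitly naming the Hermite basis is to use the semigroup identity to rewrite $\Sp(U_{1-\epsilon}g) = \E[\ip{g}{U_{\rho(1-\epsilon)^2} g}]$ and prove continuity of $\rho' \mapsto \E[\ip{g}{U_{\rho'} g}]$ directly; this boils down to the same bounded-convergence content applied to the integral representation of $U_{\rho'} g$.
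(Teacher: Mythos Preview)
Your proof is correct and takes a different, more elementary route than the paper's. The paper bounds $|\Sp(U_{1-\eps}g)-\Sp(g)|$ via Cauchy--Schwarz by $\|U_{1-\eps}g-g\|_2+\|U_{1-\eps}U_\rho g-U_\rho g\|_2$ and then invokes a cited semigroup continuity result (Janson, Theorem~4.20) to conclude that $U_{1-\eps}h\to h$ in $L^2$. You instead diagonalize $U_\rho$ explicitly in the Hermite basis, obtain the closed form $\Sp(U_{1-\eps}g)=\sum_\sigma (1-\eps)^{2|\sigma|}\rho^{|\sigma|}\|\hat g(\sigma)\|_2^2$, and finish with dominated convergence over the index set. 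Your argument is self-contained and in fact unpacks exactly what the cited Janson result is doing under the hood; the paper's version is shorter but leans on an external reference. One minor point worth making explicit in your write-up is that $|\sigma|$ here denotes the total degree $\sum_i\sigma_i$ (the standard Hermite convention), not the paper's multilinear convention of counting nonzero indices; this does not affect correctness since you are working in your own notation, but it avoids confusion with the paper's earlier definitions.
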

\begin{proof}
  \begin{eqnarray*}
    \left|
      \Sp(U_{1-\eps} g)
      -
      \Sp(g)
    \right|
    &=&
    \left|
      \E\ip{U_{1-\eps} g}{U_\rho U_{1-\eps} g}
      -
      \E\ip{g}{U_\rho g}
    \right|
    =
    \\
    &=&
    \left|
      \E\ip{U_{1-\eps} g - g}{U_\rho U_{1-\eps} g}
      +
      \E\ip{g}{U_\rho U_{1-\eps} - U_\rho g}
    \right|
    \le
    \\
    &\le&
    \|U_{1-\eps} g - g\|_2
    +
    \|U_{1-\eps}U_\rho - U_\rho g\|_2
  \end{eqnarray*}
  where the inequality follows from Cauchy-Schwarz and commutativity
  of $U_\rho$ and $U_{1-\eps}$.
  By \cite[Theorem 4.20]{Janson:97}, if $g:\Reals^n \rightarrow
  \Reals$, $g \in L^2$, then
  \begin{equation}
    U_{1-\eps} g \rightarrow g \text{ in } L^2 \text{ as } \eps \rightarrow 0
  \end{equation}
  Clearly, this extends to vector-valued functions as well,
  hence the result follows.
\end{proof}

Analogous to the discrete setting we say that
$g:\Reals^n \rightarrow \Delta_q$
is balanced if
$\E[g(X)]=\frac 1 q \vec{1}$ for $X \sim \Norm(0,1)$.

The following lemma shows for any fuzzy partition a non-fuzzy
partition with almost the same expectation and noise stability (as
measured in Theorem \ref{thm:nGauss2SplitIsop} and Conjecture
\ref{conj:2GaussQSplitIsop}) can be created.
\begin{lemma}
  \label{lem:nonFuzziness}
  Fix $\rho \in \left[-\frac{1}{k-1},1\right]$ and $q_0 \le q$.
  Suppose $X_1,\dots,X_k \sim \Norm(0, I_n)$ and
  $\Cov(X_i,X_j)=\rho I_n$ for $i\neq j$.
  Then, for any $\epsilon>0$ and
  $g_1, \ldots, g_k:\Reals^n \rightarrow \Delta_q$,
  there exist $h_1, \ldots, h_k:\Reals^n \rightarrow E_q$
  such that
  \begin{equation}
    \label{eq:fuzzyDev1}
    \sum_{i=1}^q
    \left| \E h_{j,i} - \E g_{j,i} \right| \le q \epsilon
    \,\,,
    \forall j
  \end{equation}
  and
  \begin{equation}
    \label{eq:fuzzyDev2}
    \left|
      \E \sum_{i=1}^{q_0} \prod_{j=1}^k h_{j,i}(X_j)
      -
      \E \sum_{i=1}^{q_0} \prod_{j=1}^k g_{j,i}(X_j)
    \right|
    \le \epsilon
  \end{equation}
\end{lemma}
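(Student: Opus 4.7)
The plan is to build $h_j$ by directly discretizing $g_j$ on a sufficiently fine partition of $\Reals^n$: first smooth each $g_j$ to make it continuous, then on a fine enough tiling of a large box replace each (nearly constant) slice by an $E_q$-valued assignment with the correct Lebesgue proportions. For the smoothing step, replace $g_j$ by $\wt g_j = U_{1-\delta} g_j$ for small $\delta > 0$. Since Gaussian measure is $U_\rho$-stationary, $\E \wt g_{j,i}(X_j) = \E g_{j,i}(X_j)$ exactly, so \eqref{eq:fuzzyDev1} is not affected by this substitution. By the $L^2$-convergence $U_{1-\delta} g \to g$ invoked in the proof of Lemma~\ref{lem:OrnsteinConvergence}, combined with a telescoping argument using $|g_{j,i}|, |\wt g_{j,i}| \le 1$,
\begin{equation*}
  \left| \E \sum_i \prod_j g_{j,i}(X_j) - \E \sum_i \prod_j \wt g_{j,i}(X_j) \right|
  \ \le\ C_{k,q} \sum_j \| g_j - \wt g_j \|_2 \ \xrightarrow{\delta \to 0}\ 0,
\end{equation*}
so at a cost of $\epsilon/3$ in \eqref{eq:fuzzyDev2} I may assume each $g_{j,i}$ is continuous by Lemma~\ref{lem:OrnsteinSmooth}.

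Next, fix a cube $K = [-M,M]^n$ with $\P(\exists j : X_j \notin K) \le \epsilon/(3qk)$ and tile $K$ with axis-aligned cubes $\{C_m\}$ of side length $\eta$. Choose $\eta$ small enough that (i) each $g_{j,i}$ varies by at most $\epsilon'$ on each $C_m$ (uniform continuity on $K$) and (ii) the joint density $\phi$ of $(X_1,\ldots,X_k)$ is constant up to a factor $1+\epsilon'$ on each product cell $C_{m_1} \times \cdots \times C_{m_k} \subseteq K^k$. On each $C_m$, slice $C_m$ along a single axis into $q$ sub-rectangles $C_m^{j,1},\ldots,C_m^{j,q}$ of Lebesgue measures exactly $g_{j,i}(c_m) |C_m|$, where $c_m$ is the center of $C_m$; define $h_j(x) = e_i$ for $x \in C_m^{j,i}$, and $h_j(x)$ to be any fixed element of $E_q$ outside $K$.

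The marginal bound \eqref{eq:fuzzyDev1} is immediate: $\E h_{j,i}(X_j) = \sum_m \P(X_j \in C_m^{j,i}) \approx \sum_m g_{j,i}(c_m)\P(X_j \in C_m) \approx \E g_{j,i}(X_j)$, with total error $O(q\epsilon')$ plus the $K^c$ contribution. For the agreement bound \eqref{eq:fuzzyDev2}, property (ii) implies that conditioned on $X_j \in C_{m_j}$ for all $j$, the joint distribution of $(X_1,\ldots,X_k)$ is $(1\pm\epsilon')$-close in density to uniform on $C_{m_1}\times\cdots\times C_{m_k}$, so the $X_j$'s are approximately independent uniform on their respective cells; hence
\begin{equation*}
\P(\forall j : X_j \in C_{m_j}^{j,i}) \ \approx\ \Bigl(\prod_j g_{j,i}(c_{m_j})\Bigr)\, \P(\forall j : X_j \in C_{m_j}).
\end{equation*}
Summing over $i \le q_0$ and over the $m_j$'s, and using continuity of $g$, yields \eqref{eq:fuzzyDev2}. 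The main obstacle is making (ii) quantitative, i.e.\ bounding the oscillation of the exponent $-\tfrac12 x^\top \Sigma^{-1} x$ of the joint Gaussian density on a product cell of side $\eta$; for fixed $k$ and $\rho$ the inverse covariance $\Sigma^{-1}$ has bounded operator norm and $\|x\|_\infty \le M$ on $K^k$, so this oscillation is $O(\eta M \|\Sigma^{-1}\|_{\mathrm{op}})$, which can be made $\le \epsilon'$ by taking $\eta$ small. Choosing $\delta, M, \epsilon', \eta$ in this order makes the total error at most $\epsilon$.
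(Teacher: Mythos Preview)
Your proposal is essentially correct and follows the same discretization-on-cubes strategy as the paper, but the execution differs in two ways worth noting.

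First, you add a preprocessing step $g_j \mapsto U_{1-\delta} g_j$ to make each $g_j$ continuous, so that you can match the value $g_{j,i}(c_m)$ at the center of each cube. The paper avoids this entirely: on each cube it defines $h_j$ so that its \emph{Lebesgue average} over the cube equals that of $g_j$, which works for arbitrary measurable $g_j$ without any smoothing. Second, where you bound the oscillation of the joint density on product cells directly, the paper introduces auxiliary variables $(Z_j + U_j)$ with $Z_j = \delta\lfloor X_j/\delta\rfloor$ and $U_j$ independent uniform on $[0,\delta)^n$; these are conditionally independent given the cube indices $Z_j$, so the product computation is \emph{exact} under their law, and a single dominated-convergence argument shows their joint density converges in $L^1$ to that of $(X_1,\ldots,X_k)$. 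This collapses your two separate estimates (continuity of $g$, oscillation of $\phi$) into one $L^1$ bound applied twice.

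One small gap in your version: the bound on $\|\Sigma^{-1}\|_{\mathrm{op}}$ fails at the endpoints $\rho \in \{-\tfrac{1}{k-1},\,1\}$, where the Gaussian is degenerate and has no density on $\Reals^{nk}$. The paper treats these separately by working with the density relative to the appropriate lower-dimensional Lebesgue measure; you should note the same fix.
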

\noindent

\begin{proof}
  Assume first that $\rho \in \left(-\frac{1}{k-1},1\right)$ so that
  the normal distribution is non-degenerate.
  Discretize $\Reals^n$ with cubes $[0,\delta)^n$, i.e.
  write $\Reals^n = \delta \mathbb{Z}^n \times [0,\delta)^n$.
  where $\delta \mathbb{Z}^n$ denotes the n-dimensional integer
  lattice scaled by a factor $\delta$.

  Let $Z_{i,j} = \delta \left\lfloor \frac{X_{i,j}}{\delta} \right\rfloor$
  so that $Z_i$ denotes the cube $X_i$ is in,
  and let $U_{i,j}$ be i.i.d. uniform on $[0,\delta]$,
  independent of $X_1, \ldots X_k$.

  Further let $\eta$ be the density of $(X_1, \ldots, X_k)$ and
  $\tilde \eta$ the density of $(Z_1+U_1, \ldots, Z_k + U_k)$.
  By continuity of $\eta$ we have pointwise convergence,
  \begin{equation}
    \tilde{\eta} (x)
    \rightarrow
    \eta (x)
    \text{ as }
    \delta \rightarrow 0
  \end{equation}
  By dominated convergence, this implies that we can choose $\delta$ so that
  \begin{equation}
    \int_{\Reals^{nk}}
    \left|
      \eta(x)
      -
      \tilde\eta(x)
    \right|
    dx
    \le
    \frac{\epsilon}{2}
  \end{equation}
  Hence, for any $f:\Reals^{nk} \rightarrow [0,1]$, we have
  \begin{equation}
    \label{eq:scheffe}
    \left|
      \int_{\Reals^{nk}} f(x) \eta(x) dx
      -
      \int_{\Reals^{nk}} f(x) \tilde\eta(x) dx
    \right|
    \le
    \int_{\Reals^{nk}}
    f(x)
    \left|
      \eta(x)
      -
      \tilde\eta(x)
    \right|
    dx
    \le \frac{\epsilon}{2}
  \end{equation}

  Each non-fuzzy function $h_j$ is constructed from $g_j$ by transferring masses internally in each cube.
  More specifically,
  $h_j$ is defined arbitrarily on each cube with the only restriction that
  \begin{equation}
    \label{eq:sameE}
    \E[h_j(Z_1 + U_1) | Z_1]
    =
    \E[g_j(Z_1 + U_1) | Z_1]
  \end{equation}
  (For instance, if $\E[h(Z_1 + U_1) | Z_1=z_1] = \mu$,
  then we may divide the cube
  $z_1 + [0,\delta)^n$ into $q$ parts of conditional measure $\mu_1, \ldots \mu_q$
  and assign the value $e_1, \ldots, e_q$ respectively to each part.)
  Thus,
  \begin{eqnarray*}
    \nonumber
      \E \sum_{i=1}^{q_0} \prod_{j=1}^k g_{j,i}(Z_j+U_j)
      =
      \E \sum_{i=1}^{q_0} \prod_{j=1}^k \E[g_{j,i}(Z_j+U_j)|Z_j]
      =
      \\
      \label{eq:comb1}
      =
      \E \sum_{i=1}^{q_0} \prod_{j=1}^k \E[h_{j,i}(Z_j+U_j)|Z_j]
      =
      \E \sum_{i=1}^{q_0} \prod_{j=1}^k h_{j,i}(Z_j+U_j)
  \end{eqnarray*}
  Applying \eqref{eq:scheffe} twice gives \eqref{eq:fuzzyDev2}.
  Similarly
  \begin{equation*}
      \E g_{j,i}(Z_1+U_1)
      =
      \E [\E[g_{j,i}(Z_1+U_1)|Z_1]]
      =
      \E [\E[h_{j,i}(Z_1+U_1)|Z_1] ]
      =
      \E h_{j,i}(Z_1+U_1)
  \end{equation*}
  and two more applications of \eqref{eq:scheffe} gives
  $|\E g_{j,i}(X_1) - \E h_{j,i}(X_1)| \le \epsilon$
  and \eqref{eq:fuzzyDev1} follows.

  The two degenerate cases can be handled in a similar way by using a
  density with respect to a lower dimensional Lebesgue measure.
\end{proof}

We also need a simple result that states that, for instance, almost balanced functions cannot be much more stable than balanced functions:
\begin{lemma}
  \label{lem:almostBalanced}
  Fix $\rho \in [-\frac{1}{k-1},1]$, $q_0 \le q$ and
  $\mu_1, \ldots, \mu_k \in \Reals^q$.
  Suppose $X_1,\dots,X_k \sim \Norm(0, I_n)$ are jointly normal with
  $\Cov(X_i,X_j)=\rho I_n$ for $i\neq j$.
  Let $g_1, \ldots, g_k:\Reals^n \rightarrow E_q$ with
  \begin{equation}
    \sum_{i=1}^q
    \left| \E g_{j,i} - \mu_j \right|
    = \delta_j
  \end{equation}
  Then, there exist $h_1, \ldots, h_k:\Reals^n \rightarrow E_q$
  with $\E h_j = \mu_j$
  such that
  \begin{equation}
    \left|
      \E \sum_{i=1}^{q_0} \prod_{j=1}^k g_{j,i}(X_j)
      -
      \E \sum_{i=1}^{q_0} \prod_{j=1}^k h_{j,i}(X_j)
    \right|
    \le
    \sum_{j=1}^k \frac{\delta_j}{2}
  \end{equation}
\end{lemma}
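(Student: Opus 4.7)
The plan is to construct each $h_j$ from $g_j$ separately by redistributing mass on $\mathbb{R}^n$, in a way that changes the partition induced by $g_j$ by total variation at most $\delta_j/2$, and then use a hybrid argument to bound the effect on the product expectation.

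First I would observe that since $g_j : \mathbb{R}^n \to E_q$, the function $g_j$ is equivalent to a partition $A_{j,1}, \ldots, A_{j,q}$ of $\mathbb{R}^n$ (with $A_{j,i} = g_j^{-1}(e_i)$) satisfying $\P(X \in A_{j,i}) = \E g_{j,i}$. Writing $\alpha_{j,i} = \E g_{j,i}$, the goal is to produce a new partition $A'_{j,1}, \ldots, A'_{j,q}$ with $\P(X \in A'_{j,i}) = \mu_{j,i}$ (which implicitly requires $\mu_j \in \Delta_q$, else no such $h_j$ exists). Let $P_j = \{i : \alpha_{j,i} > \mu_{j,i}\}$ and $N_j = \{i : \alpha_{j,i} < \mu_{j,i}\}$. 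Since $\sum_i \alpha_{j,i} = \sum_i \mu_{j,i} = 1$, we have $\sum_{i \in P_j}(\alpha_{j,i} - \mu_{j,i}) = \sum_{i \in N_j}(\mu_{j,i} - \alpha_{j,i}) = \delta_j/2$. So I can split each $A_{j,i}$ with $i \in P_j$ into a "kept" piece of measure $\mu_{j,i}$ and a "surplus" piece of measure $\alpha_{j,i} - \mu_{j,i}$, and then redistribute the surplus pieces (whose total measure is $\delta_j/2$) to the sets indexed by $N_j$ so that each $A'_{j,i}$ ends up with measure $\mu_{j,i}$. Defining $h_j(x) = e_i$ iff $x \in A'_{j,i}$ gives $\E h_j = \mu_j$, and by construction the set $B_j := \{x : g_j(x) \neq h_j(x)\}$ satisfies $\P(X \in B_j) \le \delta_j/2$.

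Next I would run a hybrid argument. Define $F_l(x_1, \ldots, x_k) = \sum_{i=1}^{q_0} \prod_{j \le l} h_{j,i}(x_j) \prod_{j > l} g_{j,i}(x_j)$, so that $F_0$ corresponds to the all-$g$ expression and $F_k$ to the all-$h$ expression. Since each $g_j$ and $h_j$ takes values in $E_q$, each inner product $\prod_j (\cdot)_{j,i}$ is the indicator that all the factors output $e_i$, so $F_l(X_1, \ldots, X_k) \in \{0,1\}$. Whenever $g_l(X_l) = h_l(X_l)$ we have $F_l = F_{l-1}$, hence
\begin{equation}
|\E F_l - \E F_{l-1}| \le \P(g_l(X_l) \neq h_l(X_l)) = \P(X_l \in B_l) \le \delta_l/2,
\end{equation}
using that $X_l \sim \Norm(0, I_n)$ has the same marginal law as $X$. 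The triangle inequality then gives $|\E F_k - \E F_0| \le \sum_{l=1}^k \delta_l/2$, which is exactly the claimed bound.

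The only mildly subtle step is the mass redistribution in the first paragraph — one must verify that measurable "surplus" subsets of the prescribed measures can actually be carved out of each $A_{j,i}$, but this follows because $X$ has a continuous distribution (so each $A_{j,i}$ is a non-atomic measure space and admits measurable subsets of every intermediate measure). Once the coupling $\P(g_j \neq h_j) \le \delta_j/2$ is secured, the hybrid argument is routine and produces the stated inequality without any further analytic input.
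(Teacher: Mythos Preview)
Your proposal is correct and follows essentially the same approach as the paper: construct each $h_j$ by modifying $g_j$ on a set of Gaussian measure $\delta_j/2$ (your mass-redistribution argument makes explicit what the paper summarizes as ``such sets can easily be found since the Gaussian density is continuous''), and then control the change in the product expectation. Your hybrid/telescoping argument is just a spelled-out version of the paper's one-line ``union bound'' (both exploit that $F_0$ and $F_k$ are $\{0,1\}$-valued and agree whenever $g_j(X_j)=h_j(X_j)$ for all $j$), so there is no substantive difference in method.
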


\begin{proof}
  Clearly, it is enough to change the value of $g_j$ on a set of
  Gaussian measure $\frac{\delta_j}{2}$ (and such sets can easily be
  find since the Gaussian density is continuous).
  Thus, we can create a function $h_j$ with $\E h_j = \mu_j$ such that
  $P(g_j(X_j) \neq h_j(X_j)) = \frac{\delta_j}{2}$, and the
  result follows by the union bound.
\end{proof}

\section{An invariance principle}
\label{sec:invariance}
Let $f:\prod_{i=1}^n\Omega_i \rightarrow \Reals$ be a function on a finite product probability space and express it as a multilinear polynomial $Q(\calX)$ over an independent sequence of orthonormal ensembles as in \eqref{eq:fQ}.
The invariance principle of~\cite{MoOdOl:09} (see also earlier results
in ~\cite{Rotar:75}), shows that if $Q$ has low degree and each coordinate has small influence then the distribution of $Q(\calX)$ does not change by much if we replace the variables $X_{i,j}$ with independent standard Gaussians $Z_{i,j}$.

In~\cite{Mossel:08} the invariance principle was extended to the case of vector-valued functions $f=(f_1,\dots,f_k)$ where $f_j:\prod_{i=1}^n\Omega_i \rightarrow \Reals$ for each j.

\begin{theorem}(\cite{Mossel:08}, Theorem 4.1 and 3.16)
  \label{thm:invariancePrinciple}
  Let $(\prod_{i=1}^n \Omega_i, \prod_{i=1}^n \mu_i)$
  be a finite product probability space,
  $\alpha>0$ the minimum probability of any atom in any $\mu_i$
  and $\calX=(\calX_1, \dots, \calX_n)$ an independent sequence of orthonormal ensembles such that $\calX_i$ is a basis for functions $\Omega_i \rightarrow \Reals$.
  Let $Q$ be a k-dimensional multilinear polynomial such that
  $\Var Q_j(\calX) \le 1$, $\deg Q_j \le d$ and
  $\Inf_i Q_j(\calX) \le \tau$.
  Finally, let $\Psi:\Reals^k \rightarrow \Reals$ be a $\mathcal{C}^3$ function
  with $|\Psi^{(\vec r)}| \le B$ for $|\vec r| = 3$.
  Then,
  \begin{equation}
    \left|
      \E\Psi(Q(\calX)) - \E\Psi(Q(\calZ))
    \right|
    \le 2dBk^3\left(8/\sqrt{\alpha}\right)^d \sqrt{\tau}
    = \bigoh({\sqrt \tau})
  \end{equation}
  where $\calZ$ is an independent sequence of standard Gaussian ensembles.
\end{theorem}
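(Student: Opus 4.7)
The plan is to prove this by the Lindeberg replacement method combined with hypercontractivity, which is the standard route for invariance principles of this flavor. I would define hybrid sequences $H^{(0)} = \calX, H^{(1)}, \dots, H^{(n)} = \calZ$ where $H^{(i)}$ uses Gaussian ensembles $\calZ_1,\dots,\calZ_i$ in the first $i$ coordinates and the original orthonormal ensembles $\calX_{i+1},\dots,\calX_n$ in the remaining ones, and bound the total telescoping error $|\E\Psi(Q(\calX)) - \E\Psi(Q(\calZ))| \le \sum_{i=1}^n |\E\Psi(Q(H^{(i-1)})) - \E\Psi(Q(H^{(i)}))|$.

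To control each term in the sum, I would split $Q$ as $Q = R_i + S_i$ where $R_i$ collects all monomials $\calX_\sigma$ with $\sigma_i = 0$ (so $R_i$ is independent of the $i$-th ensemble and identical in $H^{(i-1)}$ and $H^{(i)}$) and $S_i$ collects the rest. Writing $S_i^{\calX}$ and $S_i^{\calZ}$ for the versions using the $i$-th discrete vs.\ Gaussian ensemble, and Taylor expanding $\Psi$ to third order around $R_i$, the zeroth order terms agree trivially, the first order terms agree because both ensembles have mean zero, and the second order terms agree because both ensembles are orthonormal (matching first and second moments with the constant $1$). The only surviving contribution is the third order remainder, which is bounded by $\frac{B}{6} \bigl( \E\|S_i^{\calX}\|_3^3 + \E\|S_i^{\calZ}\|_3^3 \bigr)$.

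The main obstacle is bounding these third moments: naively the degree $d$ truncation of $S_i$ is bounded only in $L^2$ by $\Inf_i Q$, not in $L^3$. Here I would invoke the $(2,3)$-hypercontractivity / Bonami--Beckner--Nelson inequality on $\alpha$-biased product spaces: for a degree-$d$ multilinear polynomial $P$ over such a product space, $\|P\|_3 \le C_\alpha^d \|P\|_2$ where one can take $C_\alpha = O(1/\sqrt\alpha)$ (this is the precise source of the $(8/\sqrt\alpha)^d$ factor). Applied coordinatewise this gives $\E\|S_i^{\calX}\|_3^3 \le (C_\alpha)^{3d} \bigl(\sum_j \Inf_i Q_j \bigr)^{3/2} \le (C_\alpha)^{3d} (k \tau)^{1/2} \sum_j \Inf_i Q_j$, using $\Inf_i Q_j \le \tau$ and $\|S_i\|_2^2 = \sum_j \Inf_i Q_j$. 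The Gaussian term is bounded analogously (with a better constant).

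Summing over $i$ and using the low-degree influence bound $\sum_{i=1}^n \Inf_i Q_j \le d \Var Q_j \le d$ from \eqref{eq:dInflBound}, the total error telescopes to $O(B k^{3} d (1/\sqrt\alpha)^{3d} \sqrt\tau)$, matching (up to absorbing constants into the base) the claimed bound $2 d B k^3 (8/\sqrt\alpha)^d \sqrt\tau$. The assumption $\deg Q_j \le d$ is what lets hypercontractivity be applied with a finite constant; the assumption $\max_i \Inf_i Q_j \le \tau$ is what turns one of the $\|S_i\|_2$ factors into the small parameter $\sqrt\tau$ while the other is absorbed into the overall variance bound. The final form of the theorem then follows by combining Theorem 3.16 of \cite{Mossel:08} (a single-coordinate Taylor/hypercontractive estimate) with Theorem 4.1 (the telescoping to the vector-valued multi-coordinate statement).
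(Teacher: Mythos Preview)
The paper does not give a proof of this theorem at all: it is quoted verbatim from \cite{Mossel:08} (Theorems~4.1 and~3.16 there) and used as a black box to derive Theorems~\ref{thm:invariancePrinciple2}--\ref{thm:invariancePrincipleFinal}. So there is no in-paper argument to compare your proposal against.

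That said, your sketch is a faithful reconstruction of the Lindeberg--hypercontractivity argument that underlies the cited result: the hybrid replacement $H^{(0)},\dots,H^{(n)}$, the decomposition $Q = R_i + S_i$, the vanishing of the first and second order Taylor terms by matching moments of orthonormal ensembles, the control of the third-order remainder via $(2,3)$-hypercontractivity on $\alpha$-bounded product spaces, and the summation $\sum_i \Inf_i Q_j \le d\,\Var Q_j \le d$ from \eqref{eq:dInflBound} are exactly the ingredients used in \cite{Mossel:08} (and earlier in \cite{MoOdOl:09}). Your bookkeeping of the $k^3$ factor (from the number of third-order multi-indices) and the $(8/\sqrt\alpha)^d$ factor (from the hypercontractive constant raised to the degree) is slightly informal but points to the correct sources of each piece of the final bound. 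In short: your proposal is correct, and it \emph{is} the proof---just one that the present paper chose to cite rather than reproduce.
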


As suggested in~\cite[Corollary 4.3]{Mossel:08},
since neither $\Var Q_j(\calX)$, $\deg Q_j$ or $\Inf_i Q_j$ depend on whether the ensembles are orthonormal, we can simply replace the orthonormal requirement by a matching covariance structure requirement.

\begin{definition}
  We say that two independent sequences of ensembles $\calX=(\calX_1,
  \dots, \calX_n)$ and $\calY=(\calY_1, \dots, \calY_n)$ have a
  matching covariance structure if for all $i$,
  $|\calX_i|=|\calY_i|$
  and
  $\E[\calX_i^t \calX_i] = \E[\calY_i^t \calY_i]$.
\end{definition}

\begin{theorem}
  \label{thm:invariancePrinciple2}
  Let $\calX=(\calX_1, \dots, \calX_n)$ be an independent sequence of
  ensembles,
  such that $\P(\calX_i = x) \ge \alpha > 0, \forall i,x$.
  Let $Q$ be a k-dimensional multilinear polynomial such that
  $\Var Q_j(\calX) \le 1$, $\deg Q_j \le d$ and
  $\Inf_i Q_j(\calX) \le \tau$.
  Finally, let $\Psi:\Reals^k \rightarrow \Reals$ be a $\mathcal{C}^3$ function
  with $|\Psi^{(\vec r)}| \le B$ for $|\vec r| = 3$.
  Then,
  \begin{equation}
    \left|
      \E\Psi(Q(\calX)) - \E\Psi(Q(\calZ))
    \right|
    \le 2dBk^3\left(8/\sqrt{\alpha}\right)^d \sqrt{\tau}
    = \bigoh({\sqrt \tau})
  \end{equation}
  where $\calZ$ is an independent sequence of Gaussian ensembles with the same covariance structure as $\calX$.
\end{theorem}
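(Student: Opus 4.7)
The plan is to reduce Theorem \ref{thm:invariancePrinciple2} to Theorem \ref{thm:invariancePrinciple} by applying a simultaneous linear change of basis that orthonormalizes both $\calX$ and $\calZ$. The crucial observation is that since the two sequences share a common Gram matrix $C_i := \E[\calX_i^t \calX_i] = \E[\calZ_i^t \calZ_i]$, the same linear transformation that turns $\calX_i$ into an orthonormal ensemble automatically turns $\calZ_i$ into one. Once both sequences are orthonormal we can invoke the original invariance principle as a black box, because the hypotheses (degree, variance, and influence bounds) are all basis-independent quantities.

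Concretely, write $C_i$ as a block matrix whose only non-trivial block is the $(m_i-1) \times (m_i-1)$ covariance submatrix $\Sigma_i$ of $(X_{i,1}, \ldots, X_{i,m_i-1})$. Assuming first that $\Sigma_i$ is positive definite, let $A_i := \mathrm{diag}(1, \Sigma_i^{-1/2})$ and define $\tilde\calX_i := A_i \calX_i$ and $\tilde\calZ_i := A_i \calZ_i$. Then $\tilde\calX$ is an independent sequence of orthonormal ensembles, while $\tilde\calZ$ is jointly Gaussian with identity Gram matrix within each ensemble, hence a sequence of independent standard Gaussian ensembles. Now substitute $x_{i,j} \mapsto \sum_k (A_i^{-1})_{jk}\, \tilde x_{i,k}$ block-by-block inside $Q$ to obtain a new multilinear polynomial $\tilde Q$ with $\tilde Q(\tilde\calX) = Q(\calX)$ and $\tilde Q(\tilde\calZ) = Q(\calZ)$ as random variables. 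Since $A_i$ preserves the splitting between $x_{i,0}$ and the non-constant variables, the substitution neither couples distinct blocks nor raises the degree of any monomial, so $\tilde Q$ remains multilinear with $\deg \tilde Q_j \le d$.

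To apply Theorem \ref{thm:invariancePrinciple}, note that because each $\tilde\calX_i$ is a bijective linear function of $\calX_i$, conditioning on $\tilde\calX_{-i}$ is equivalent to conditioning on $\calX_{-i}$, so $\Var \tilde Q_j(\tilde\calX) = \Var Q_j(\calX) \le 1$ and $\Inf_i \tilde Q_j(\tilde\calX) = \Inf_i Q_j(\calX) \le \tau$. Take $\Omega_i := \mathrm{supp}(\calX_i)$ with its induced measure, so every atom has probability at least $\alpha$. If $|\tilde\calX_i| < |\Omega_i|$, extend $\tilde\calX_i$ to a full orthonormal basis of functions $\Omega_i \to \Reals$ by appending additional orthonormal random variables, and extend $\tilde\calZ_i$ by appending matching independent $N(0,1)$ entries; since $\tilde Q$ does not involve the padded coordinates, its variances, degree and influences are unchanged. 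Theorem \ref{thm:invariancePrinciple} applied to the extended polynomial now yields exactly the claimed bound.

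The main wrinkle is the degenerate case in which $\Sigma_i$ is singular. Then some non-trivial linear combination of $(X_{i,1}, \ldots, X_{i,m_i-1})$ has zero $L^2$ norm; since $\Sigma_i$ is also the covariance of the Gaussian block $(Z_{i,1}, \ldots, Z_{i,m_i-1})$, the corresponding combination of the $Z$'s vanishes almost surely by Gaussianity. We may therefore delete the redundant components from both $\calX_i$ and $\calZ_i$ simultaneously and rewrite $Q$ in terms of the remaining components without changing $Q(\calX)$ or $Q(\calZ)$ as random variables; after this reduction $\Sigma_i$ becomes positive definite and the previous argument proceeds verbatim. Handling this singular case is the only step that requires any care beyond routine basis-change bookkeeping.
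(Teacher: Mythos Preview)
Your proposal is correct and follows essentially the same approach as the paper: reduce to the orthonormal case via a linear change of basis within each block, note that degree, variance and influence are basis-independent, and invoke Theorem~\ref{thm:invariancePrinciple}. The only difference is that the paper runs the basis change in the opposite direction---it fixes a full orthonormal basis $\calX_i'$ for functions on $\Omega_i$, writes $\calX_i = \calX_i' A_i$ (with $A_i$ not required to be invertible), and \emph{defines} $\calZ_i := \calZ_i' A_i$ from a standard Gaussian $\calZ'$---which sidesteps both the singular-$\Sigma_i$ case and the padding step you had to handle explicitly.
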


\begin{proof}
  For each $i$, let $\Omega_i$ be the $\sigma$-algebra generated by the variables in $\calX_i$. Since $\alpha>0$, $\Omega_i$ is finite, hence we can find an
  orthonormal ensemble $\calX_i'$ which is a basis for
  $\Omega_i \rightarrow \Reals$
  and a linear transformation $A_i$ such that
  $\calX_i = \calX_i' A_i$.
  Let $\calZ'$ be any standard Gaussian ensemble and $\calZ_i = \calZ_i' A_i$.
  Then $\calZ$ has the same covariance structure as $\calX$.
  Let $Q'$ be the multilinear polynomial defined by $Q'(\calX') =
  Q(\calX'_1 A_1, \dots, \calX'_n A_n)$.
  The result then follows by applying Theorem \ref{thm:invariancePrinciple} to
  $Q'(\calX')$
  while noting that it has the same
  variances, degrees and influences as $Q(\calX)$.
\end{proof}

For our applications
we will need a version of Theorem \ref{thm:invariancePrinciple2} for
functions $\Psi$ which are not $\mathcal{C}^3$ functions.
Instead we will assume that $\Psi$ is Lipschitz continuous
with Lipschitz constant $A$, i.e.
$|\Psi(x)-\Psi(y)| \le A \|x-y\|_2$.

\begin{theorem}
  \label{thm:invariancePrinciple3}
  Let $\calX=(\calX_1, \dots, \calX_n)$ be an independent sequence of
  ensembles,
  such that $\P(\calX_i = x) \ge \alpha > 0, \forall i,x$.
  Let $Q$ be a k-dimensional multilinear polynomial such that
  $\Var Q_j(\calX) \le 1$, $\deg Q_j \le d$ and
  $\Inf_i Q_j(\calX) \le \tau$.
  Finally, let $\Psi:\Reals^k \rightarrow \Reals$ be
  Lipschitz continuous with Lipschitz constant $A$.
  Then,
  \begin{equation}
    \left|
      \E\Psi(Q(\calX)) - \E\Psi(Q(\calZ))
    \right|
    \le D_k A
    \left(
      d \left(8/\sqrt{\alpha}\right)^d \sqrt{\tau}
    \right)^{1/3}
    = \bigoh(\tau^{1/6})
  \end{equation}
  where $\calZ$ is an independent sequence of Gaussian ensembles with
  the same covariance structure as $\calX$
  and $D_k$ are universal constants.
\end{theorem}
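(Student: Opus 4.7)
My plan is to reduce Theorem \ref{thm:invariancePrinciple3} to Theorem \ref{thm:invariancePrinciple2} by approximating the Lipschitz function $\Psi$ with a $\mathcal{C}^3$ mollification and then optimizing the smoothing scale. I would fix a smooth nonnegative bump function $\phi:\Reals^k \to \Reals$ supported in the unit ball with $\int \phi = 1$, set $\phi_\eta(y) = \eta^{-k}\phi(y/\eta)$, and define $\Psi_\eta = \Psi * \phi_\eta$. Two standard estimates will govern the argument: a pointwise approximation bound $\|\Psi_\eta - \Psi\|_\infty \le A\eta$, which follows from Lipschitz continuity and the compact support of $\phi$, and a bound on third derivatives $\|\partial^{\vec r}\Psi_\eta\|_\infty \le C_k A\eta^{-2}$ for $|\vec r|=3$, where $C_k$ depends only on $k$ and the fixed choice of $\phi$.

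Next I would apply Theorem \ref{thm:invariancePrinciple2} to $\Psi_\eta$ with $B = C_k A \eta^{-2}$, and combine the resulting bound with the pointwise estimate $\|\Psi_\eta - \Psi\|_\infty \le A\eta$ on both sides via the triangle inequality to obtain
\begin{equation}
\left|\E\Psi(Q(\calX)) - \E\Psi(Q(\calZ))\right| \le 2A\eta + 2 d k^3 C_k A (8/\sqrt{\alpha})^d \eta^{-2}\sqrt{\tau}.
\end{equation}
Setting $\eta^3$ proportional to $d(8/\sqrt{\alpha})^d\sqrt{\tau}$ then balances the two terms and yields the claimed bound of the form $D_k A (d(8/\sqrt{\alpha})^d \sqrt{\tau})^{1/3}$.

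The main obstacle will be establishing the correct dependence of the third derivatives of $\Psi_\eta$ on $\eta$. A naive estimate $\|\partial^{\vec r}\Psi_\eta\|_\infty \lesssim \|\Psi\|_\infty\eta^{-3}$ is unusable because $\Psi$ need not even be bounded, and even if it were, plugging such an estimate into Theorem \ref{thm:invariancePrinciple2} and optimizing would only give a $\tau^{1/8}$ rate. The correct move is to exploit that a Lipschitz function is differentiable almost everywhere with $|\nabla\Psi| \le A$, and to write $\partial^{\vec r}\Psi_\eta = (\partial_i\Psi) * \partial^{\vec r - e_i}\phi_\eta$ for some index $i$ with $r_i \ge 1$; then Young's convolution inequality, together with $\|\partial^{\vec r - e_i}\phi_\eta\|_1 = \eta^{-(|\vec r|-1)}\|\partial^{\vec r - e_i}\phi\|_1$, gives the desired $A\eta^{-2}$ bound and hence the $\tau^{1/6}$ rate.
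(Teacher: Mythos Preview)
Your proposal is correct and follows essentially the same approach as the paper: mollify $\Psi$ by convolution with a scaled bump function, use the Lipschitz condition to gain one power of the smoothing scale in the third-derivative bound (so that $\|\partial^{\vec r}\Psi_\eta\|_\infty \lesssim A\eta^{-2}$ rather than $\eta^{-3}$), apply Theorem~\ref{thm:invariancePrinciple2}, and optimize $\eta$. The only cosmetic difference is that the paper obtains the $A\eta^{-2}$ bound by a direct difference-quotient computation, whereas you invoke Rademacher's theorem and Young's inequality; the content is the same.
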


To prove Theorem \ref{thm:invariancePrinciple3} we need the following
lemma which assures that Lipschitz continuous functions can be
approximated well by $\mathbb{C}^3$ functions.

\begin{lemma}
  \label{lem:c3Approx}
  Suppose $\Psi: \Reals^k \rightarrow \Reals$ is Lipschitz continuous, i.e.
  $|\Psi(x)-\Psi(y)| \le A \|x-y\|_2$ for some constant $A>0$.
  Then, for all $\lambda>0$ there exists a $\mathcal{C}^\infty$ function
  $\Psi_\lambda:\Reals^k \rightarrow \Reals$ such that
  $\forall x\in \Reals^k$ and $\forall \vec r : |\vec r| = r \ge 1$,
  \begin{enumerate}[i)]
  \item
    $|\Psi(x) - \Psi_\lambda(x)| \le A \lambda$
  \item
    $|\Psi_\lambda^{(\vec{r})}(x)| \le \frac{A B_{r,k}}{\lambda^{r-1}}$
  \end{enumerate}
  where $B_{r,k}$ are universal constants.
\end{lemma}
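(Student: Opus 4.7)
The plan is to construct $\Psi_\lambda$ by mollification. Fix once and for all a nonnegative $\mathcal{C}^\infty$ function $\phi:\Reals^k \to \Reals$ supported in the closed unit ball with $\int_{\Reals^k}\phi(z)\,dz = 1$, set $\phi_\lambda(y) = \lambda^{-k}\phi(y/\lambda)$ (so $\phi_\lambda$ is a $\mathcal{C}^\infty$ bump supported in the ball of radius $\lambda$ with total mass $1$), and define $\Psi_\lambda := \Psi * \phi_\lambda$. Since $\Psi$ is Lipschitz, it has at most linear growth, hence is locally integrable; combined with the compact support and smoothness of $\phi_\lambda$, differentiation under the integral sign is justified and shows $\Psi_\lambda \in \mathcal{C}^\infty$ with $\Psi_\lambda^{(\vec r)} = \Psi * \phi_\lambda^{(\vec r)}$ for every multi-index $\vec r$.

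For bound (i), using $\int\phi_\lambda = 1$ and the fact that $\phi_\lambda$ is supported where $\|y\|\le\lambda$,
\begin{equation*}
|\Psi_\lambda(x) - \Psi(x)| = \left|\int (\Psi(x-y) - \Psi(x))\,\phi_\lambda(y)\,dy\right| \le A\int \|y\|\,\phi_\lambda(y)\,dy \le A\lambda.
\end{equation*}

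For bound (ii), the key point is that for any multi-index $\vec r$ with $r := |\vec r|\ge 1$, the compact support of $\phi_\lambda$ and integration by parts give $\int \partial^{\vec r}\phi_\lambda(y)\,dy = 0$. This zero-moment property lets me subtract the constant $\Psi(x)$ inside the integral and then invoke the Lipschitz bound on the \emph{difference} $\Psi(x-y)-\Psi(x)$, which is what I need because $\Psi$ itself is not assumed bounded. Concretely,
\begin{equation*}
\Psi_\lambda^{(\vec r)}(x) = \int \Psi(x-y)\,\partial^{\vec r}\phi_\lambda(y)\,dy = \int \bigl(\Psi(x-y)-\Psi(x)\bigr)\,\partial^{\vec r}\phi_\lambda(y)\,dy.
\end{equation*}
Using $|\Psi(x-y)-\Psi(x)|\le A\|y\|$ and the change of variables $y = \lambda z$ (which produces a factor $\lambda^{-r}$ from the chain rule on $\partial^{\vec r}\phi_\lambda$ and a factor $\lambda$ from $\|y\|$),
\begin{equation*}
|\Psi_\lambda^{(\vec r)}(x)| \le A\int \|y\|\,|\partial^{\vec r}\phi_\lambda(y)|\,dy = A\,\lambda^{1-r}\int \|z\|\,|\partial^{\vec r}\phi(z)|\,dz,
\end{equation*}
and one sets $B_{r,k} := \max_{|\vec r|=r}\int \|z\|\,|\partial^{\vec r}\phi(z)|\,dz$, which is finite and depends only on $r$ and $k$ through the fixed choice of $\phi$.

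There is no serious obstacle here: this is the textbook mollifier construction. The only mildly non-obvious step is the moment-zero trick in (ii) — without subtracting $\Psi(x)$, one would want a uniform bound on $\Psi$, which the hypotheses do not provide. Noting that $\int\partial^{\vec r}\phi_\lambda = 0$ for $|\vec r|\ge 1$ resolves this, and the Lipschitz constant $A$ appears in exactly the right place to give the $\lambda^{1-r}$ scaling demanded by (ii).
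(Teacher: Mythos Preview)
Your proof is correct and uses the same mollification construction as the paper: define $\Psi_\lambda = \Psi * \phi_\lambda$ with a scaled bump function $\phi_\lambda$. Your argument for (i) is the same as the paper's (the paper phrases it via the mean value theorem, but it is the identical estimate).

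For (ii) the overall approach is the same but the key technical step differs. You use the zero-moment identity $\int \partial^{\vec r}\phi_\lambda = 0$ to subtract the constant $\Psi(x)$ inside the integral and then apply the Lipschitz bound to $\Psi(x-y)-\Psi(x)$, after which a change of variables gives the $\lambda^{1-r}$ scaling directly. The paper instead writes $\vec r = \vec e_1 + \vec r_2$, pushes $\partial^{\vec r_2}$ onto $\phi_\lambda$, and realizes the remaining $\partial_{x_1}$ as a difference quotient, bounding $|\Psi(x+h\vec e_1 - t) - \Psi(x - t)|/|h| \le A$ by Lipschitz and then using a crude sup-norm times volume estimate on $\phi_\lambda^{(\vec r_2)}$. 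Both are standard and yield the same scaling; your zero-moment trick is somewhat cleaner and avoids singling out a coordinate direction.
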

\begin{proof}
  Let $\mu$ denote the Lebesgue measure on $\Reals^k$ and
  let $\phi:\Reals^k \rightarrow \Reals$ be the k-dimensional bump function defined by
  \begin{equation}
    \phi(x) = \left\{
      \begin{array}{ll}
        C e^{-\frac{1}{1-\|x\|_2^2}} & \mbox{if } \|x\|_2 < 1 \\
        0 & \mbox{else}
      \end{array}
    \right.
  \end{equation}
  where the constant $C$ is chosen so that
  $\int_{x \in \Reals^k} \phi(x) \mu(dx)=1$.
  It is well-known that $\phi(x)$ is $\mathcal{C}^\infty$ with bounded
  derivatives, hence there exist constants $B_r < \infty$ such that
  $\sup_x |\phi^{(\vec r)}(x)| \le B_r$. 

  For $\lambda>0$, let
  $\phi_\lambda(x) = \frac{1}{\lambda^k}\phi(\frac x \lambda)$.
  Then $\int_{\|x\|_2 \le \lambda} \phi_\lambda(x) \mu(dx)=1$ and
  $|\phi_\lambda^{(\vec r)}(x)| \le \frac{B_r}{\lambda^{k+r}}$.
  Let $\Psi_\lambda = \Psi * \phi_\lambda$, i.e.
  \begin{equation}
    \Psi_\lambda(x) = \int_{\|x-t\|_2 \le \lambda} \phi_\lambda(x-t) \Psi(t) \mu(dt)
  \end{equation}
  By the mean value theorem,
  $\Psi_\lambda(x) = \Psi(t)$, for some $t: \|x-t\|_2 \le \lambda$.
  But $|\Psi(t)-\Psi(x)| \le A \|x-t\|_2 \le A \lambda$, which proves i).
  \newline
  Without loss of generality we may assume that
  $\vec r = \vec e_1 + \vec r_2$, where $\vec e_1=(1,0,\dots,0)^t$ is
  the first unit vector.
  Since $\Psi$ is bounded on $\|x-t\|_2 \le \lambda$,
  $\Psi_\lambda$ is $\mathcal{C}^\infty$ and for any $\vec{s}$,
  \begin{equation}
    \Psi_\lambda^{(\vec{s})}(x)
    =
    \int_{\|x-t\|_2 \le \lambda} \phi_\lambda^{(\vec{s})}(x-t) \Psi(t) \mu(dt)
  \end{equation}
  Thus we may write
  \begin{eqnarray*}
    \left|
      \Psi_\lambda^{(\vec{r})}(x)
    \right|
    &
    =
    &
    \left|
      \frac{\partial}{\partial x_1}
      \int_{\|x-t\|_2 \le \lambda} \phi_\lambda^{(\vec{r_2})}(x-t)
      \Psi(t) \mu(dt)
    \right|
    =
    \\
    &
    =
    &
    \left|
      \frac{\partial}{\partial x_1}
      \int_{\|t\|_2 \le \lambda} \phi_\lambda^{(\vec{r_2})}(t)
      \Psi(x-t) \mu(dt)
    \right|
    =
    \\
    &
    =
    &
    \left|
      \lim_{h \rightarrow 0}
      \int_{\|t\|_2 \le \lambda} \phi_\lambda^{(\vec{r_2})}(t)
      \frac{(\Psi(x+h \vec e_1 -t) - \Psi(x-t))}{h} \mu(dt)
    \right|
    =
    \\
    &
    =
    &
    \lim_{h \rightarrow 0}
    \left|
      \int_{\|t\|_2 \le \lambda} \phi_\lambda^{(\vec{r_2})}(t)
      \frac{(\Psi(x+h \vec e_1 -t) - \Psi(x-t))}{h} \mu(dt)
    \right|
    \le
    \\
    & \le &
    \lim_{h \rightarrow 0}
    \int_{\|t\|_2 \le \lambda} \left|\phi_\lambda^{(\vec{r_2})}(t)\right|
    \left|\frac{(\Psi(x+h \vec e_1 -t) - \Psi(x-t))}{h}\right| \mu(dt)
    \le
    \\
    & \le &
    \frac{B_{r-1}}{\lambda^{k+r-1}} A (2\lambda)^k
    =
    \frac{B_{r-1}}{\lambda^{r-1}} A 2^k
  \end{eqnarray*}
  Taking $B_{r,k} = B_{r-1} 2^k$ proves ii).
\end{proof}

\begin{proof}[Proof of Theorem \ref{thm:invariancePrinciple3}]
  Let $\Psi_\lambda$ be the approximation given by Lemma \ref{lem:c3Approx}.
  Then,
  \begin{eqnarray*}
    \left|
      \E\Psi(Q(\calX))
      -
      \E\Psi(Q(\calZ))
    \right|
    \le
    \left|
      \E\Psi_\lambda(Q(\calX))
      -
      \E\Psi_\lambda(Q(\calZ))
    \right|
    + 2 A \lambda
    \le
    \\
    \le
    \frac{2 A \epsilon}{\lambda^2}
    +
    2 A \lambda
    \mbox{ , where }
    \epsilon=d B_{3,k} k^3\left(8/\sqrt{\alpha}\right)^d \sqrt{\tau}
  \end{eqnarray*}
  where we have used Theorem \ref{thm:invariancePrinciple2}.
  Picking $\lambda=\epsilon^{1/3}$ and letting
  $D_k = 4 k B_{3,k}^{1/3}$ gives the result.
\end{proof}

Our final version of the invariance principle replaces the bounded
degree requirement with a smoothness requirement which can be achieved
by applying the Bonami-Beckner operator $T_{1-\gamma}$ on $Q(\calX)$
for some small $\gamma>0$. Later we will use Lemma \ref{lem:smoothing}
to show that this smoothing is essentially harmless for our applications.

\begin{theorem}
  \label{thm:invariancePrincipleFinal}
  Let $\calX=(\calX_1, \dots, \calX_n)$ be an independent sequence of
  ensembles,
  such that $\P(\calX_i = x) \ge \alpha > 0, \forall i,x$.
  Fix $\gamma, \tau \in (0,1)$ and
  let $Q$ be a k-dimensional multilinear polynomial such that
  $\Var Q_j(\calX) \le 1$,
  $\Var Q_j^{>d} \le (1-\gamma)^{2d}$
  and
  $\Inf_i Q_j^{\le d}(\calX) \le \tau$, where
  $d=\frac{1}{18}\log\frac{1}{\tau}/\log\frac{1}{\alpha}$.
  Finally, let $\Psi:\Reals^k \rightarrow \Reals$ be
  Lipschitz continuous with Lipschitz constant $A$.
  Then,
  \begin{equation}
    \left|
      \E\Psi(Q(\calX)) - \E\Psi(Q(\calZ))
    \right|
    \le C_k A
    \tau^{\frac{\gamma}{18} / \log \frac{1}{\alpha}}
  \end{equation}
  where $\calZ$ is an independent sequence of Gaussian ensembles with the same covariance structure as $\calX$
  and $C_k$ is a constant depending only on $k$.
\end{theorem}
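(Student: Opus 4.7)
The strategy is to split $Q = Q^{\le d} + Q^{>d}$ at the specified degree $d$, apply Theorem~\ref{thm:invariancePrinciple3} to the low-degree part $Q^{\le d}$ (which satisfies the hypotheses of that theorem), and use Lipschitz continuity of $\Psi$ together with the smoothness hypothesis $\Var Q_j^{>d}(\calX) \le (1-\gamma)^{2d}$ to control the two truncation errors.

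For the truncation step, note that every $X_\sigma$ with $|\sigma| \ge 1$ has zero mean by independence of the ensembles, so $\E Q_j^{>d}(\calX) = 0$ and $\E[(Q_j^{>d}(\calX))^2] = \Var Q_j^{>d}(\calX)$. Since $\Psi$ is $A$-Lipschitz, Cauchy--Schwarz gives
\begin{equation*}
|\E\Psi(Q(\calX)) - \E\Psi(Q^{\le d}(\calX))| \le A\,\E\|Q^{>d}(\calX)\|_2 \le A\sqrt{k}\,(1-\gamma)^d.
\end{equation*}
Because $\calZ$ has matching covariance structure with $\calX$, the variance of each fixed multilinear monomial is the same on both sides, so the identical bound holds with $\calZ$ in place of $\calX$.

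For the low-degree piece, a triangle inequality in $L^2$ gives $\Var Q_j^{\le d}(\calX) \le (\sqrt{\Var Q_j} + \sqrt{\Var Q_j^{>d}})^2 \le 4$. Then $Q^{\le d}/2$ satisfies the hypotheses of Theorem~\ref{thm:invariancePrinciple3} with $\Var \le 1$, degree $\le d$, influences $\le \tau/4$, and the function $\tilde\Psi(y) = \Psi(2y)$ is $2A$-Lipschitz. That theorem yields
\begin{equation*}
|\E\Psi(Q^{\le d}(\calX)) - \E\Psi(Q^{\le d}(\calZ))| \le 2D_k A\bigl(d(8/\sqrt{\alpha})^d\sqrt{\tau}\bigr)^{1/3}.
\end{equation*}
Summing the three errors via the triangle inequality is routine.

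To finish, substitute $d = \frac{1}{18}\log(1/\tau)/\log(1/\alpha)$. The truncation term becomes $A\sqrt{k}(1-\gamma)^d \le A\sqrt{k}\,e^{-\gamma d} = A\sqrt{k}\,\tau^{\gamma/(18\log(1/\alpha))}$, giving exactly the claimed rate. A direct logarithmic computation shows $(8/\sqrt{\alpha})^d\sqrt{\tau} = \tau^{17/36 - (\log 8)/(18\log(1/\alpha))}$, so the invariance contribution is a positive power of $\tau$ times the polylogarithmic factor $d^{1/3}$; for $\alpha$ bounded away from $1$ this exponent exceeds $\gamma/(18\log(1/\alpha))$, and the polylog factor together with all constants can be absorbed into a final constant $C_k$ depending only on $k$. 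The main obstacle is the bookkeeping in this last step: the cube root in Theorem~\ref{thm:invariancePrinciple3} makes the balance between the truncation and invariance errors delicate, but the choice of $d$ in the statement is calibrated precisely so that both error terms are simultaneously dominated by $C_k A\,\tau^{\gamma/(18\log(1/\alpha))}$.
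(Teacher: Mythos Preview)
Your proof is correct and follows essentially the same route as the paper: truncate at degree $d$, control the tail via the Lipschitz bound (the paper's Lemma~\ref{lem:psiDeltaNoise}), apply Theorem~\ref{thm:invariancePrinciple3} to $Q^{\le d}$, and balance the two errors with the given choice of $d$. Two minor simplifications the paper makes: since monomials with distinct supports are automatically orthogonal (even for non-orthonormal ensembles), one has $\Var Q_j^{\le d}\le\Var Q_j\le 1$ directly, so your rescaling by $1/2$ is unnecessary; and the paper absorbs the stray $d^{1/3}$ factor via the crude bound $d\le 2^d$ (replacing $(8/\sqrt\alpha)^d$ by $(16/\sqrt\alpha)^d$) together with the WLOG assumption $\alpha\le 1/2$, which makes the final arithmetic a bit cleaner than your exponent-slack argument.
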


To prove Theorem \ref{thm:invariancePrincipleFinal} we need following easy lemma which bounds the effect of small deviations on Lipschitz continuous functions.

\begin{lemma}
  \label{lem:psiDeltaNoise}
  Suppose $\Psi: \Reals^k \rightarrow \Reals$ is Lipschitz continuous,
  i.e.
  $|\Psi(x)-\Psi(y)| \le A \|x-y\|_2$ for some constant $A>0$.
  Then, for all random variables $X, \xi$ taking
  values in $\Reals^k$,
  \begin{equation}
    \left|
      \E \Psi(X+\xi)
      -
      \E \Psi(X)
    \right|
    \le
    A \left( \sum_{j=1}^k \E \xi_j^2 \right)^{1/2}
  \end{equation}
\end{lemma}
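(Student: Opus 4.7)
The plan is to chain three standard inequalities: the triangle inequality for expectations, the Lipschitz bound, and Jensen's inequality. The bound is essentially immediate, so the proof is one short display.

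First I would bring the difference inside the expectation: by the triangle inequality (or Jensen applied to the convex function $|\cdot|$),
\begin{equation}
\left| \E \Psi(X+\xi) - \E \Psi(X) \right|
= \left| \E [\Psi(X+\xi) - \Psi(X)] \right|
\le \E |\Psi(X+\xi) - \Psi(X)|.
\end{equation}
Next I would apply the Lipschitz hypothesis pointwise to get
\begin{equation}
\E |\Psi(X+\xi) - \Psi(X)| \le A \, \E \|\xi\|_2.
\end{equation}
Finally, Jensen's inequality applied to the concave function $\sqrt{\cdot}$ (equivalently Cauchy--Schwarz in $L^2$) yields
\begin{equation}
\E \|\xi\|_2 = \E \sqrt{\sum_{j=1}^k \xi_j^2} \le \sqrt{\E \sum_{j=1}^k \xi_j^2} = \left( \sum_{j=1}^k \E \xi_j^2 \right)^{1/2}.
\end{equation}
Combining the three displays gives the claimed bound. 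There is no real obstacle here; the only thing to be slightly careful about is that $X$ and $\xi$ are allowed to have an arbitrary joint distribution (no independence assumed), but this is fine since all three steps are purely pathwise/pointwise before taking expectation. No measurability issue arises because Lipschitz continuity implies Borel measurability of $\Psi$.
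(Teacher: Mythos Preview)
Your proof is correct and is essentially identical to the paper's own proof: triangle inequality, then the Lipschitz bound pointwise, then Jensen (concavity of $\sqrt{\cdot}$) to pass the expectation inside the square root.
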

\begin{proof}
  \begin{eqnarray*}
    |\E \Psi(X+\xi) - \E \Psi(X)|
    \le
    \E |\Psi(X+\xi) - \Psi(X)|
    \le
    \E A \|\xi\|_2
    =
    \\
    =
    A \E \left(\sum_{j=1}^k \xi_j^2\right)^{1/2}
    \le
    A \left(\sum_{j=1}^k \E \xi_j^2\right)^{1/2}
  \end{eqnarray*}
\end{proof}

\begin{proof}[Proof of Theorem \ref{thm:invariancePrincipleFinal}]
  The proof is by truncation of $Q$ at degree
  $d=\frac{1}{18}\log\frac{1}{\tau}/\log\frac{1}{\alpha}$.
  Without loss of generality we may assume that $\alpha \le \frac 1 2$ (else, all random variables are constants and the result is trivial).
  Using Lemma \ref{lem:psiDeltaNoise} twice (with $\xi=Q^{>d}(\calX)$
  and $\xi=Q^{>d}(\calZ)$ respectively) and noting that Theorem
  \ref{thm:invariancePrinciple3} holds for all positive real values on $d$
  we find,
  \begin{eqnarray*}
    \left|
      \E\Psi(Q(\calX)) - \E\Psi(Q(\calZ))
    \right|
    \le
    \left|
      \E\Psi(Q^{\le d}(\calX)) - \E\Psi(Q^{\le d}(\calZ))
    \right|
    + 2A \sqrt{k} (1-\gamma)^d
    \\
    \le
    D_k A \left(16/\sqrt{\alpha}\right)^{d/3} \tau^{1/6}
    + 2A \sqrt{k} e^{-\gamma d}
  \end{eqnarray*}
  The result now follows by noting that
  \begin{equation*}
    e^{-\gamma d}
    =
    \tau^{\frac{\gamma}{18} / \log \frac{1}{\alpha}}
  \end{equation*}
  and
  \begin{eqnarray*}
    \left(16/\sqrt{\alpha}\right)^{d/3} \tau^{1/6}
    & = &
    e^{\frac{d}{6}\log{\frac{256}{\alpha}}} \tau^{1/6}
    =
    \tau^{-\frac{1}{6 \cdot 18} \log{\frac{256}{\alpha}}/
      \log{\frac{1}{\alpha}}} \tau^{1/6}
    \le
    \\
    & \le &
    \tau^{-\frac{1}{12}} \tau^{1/6}
    =
    \tau^{\frac{1}{12}}
    \le
    \tau^{\frac{\gamma}{18}/\log\frac{1}{\alpha}}
  \end{eqnarray*}
  where both inequalities uses that $\alpha \le \frac{1}{2}$ and the last also that $\gamma \le 1$.
\end{proof}

\subsection{Projective Lipschitz functions}
In our applications the test function $\Psi$ can be decomposed into
a projection $f_C$ onto some compact convex subset
$C \subseteq \Reals^k$
and a function a Lipschitz continuous function
$C \rightarrow \Reals$.
The projection $f_c:\Reals^k \rightarrow \Reals^k$ is defined by
$f(x)$ being the \emph{unique} point $y \in C$ which minimizes $\|x-y\|_2$.
The following standard lemma states that such projections are always Lipschitz.
\begin{lemma}
  \label{lem:convexProjIsLipschitz}
  Let $C \subseteq \Reals^k$ be a compact convex subset.
  Then $f_C$ is well-defined and Lipschitz continuous with Lipschitz
  constant 1.
\end{lemma}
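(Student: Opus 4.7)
The plan is to first establish well-definedness and then derive the Lipschitz bound from the standard variational characterization of the projection onto a convex set.

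For well-definedness, I would fix $x \in \Reals^k$ and note that $y \mapsto \|x-y\|_2$ is continuous on the compact set $C$, so a minimizer exists. Uniqueness follows from strict convexity of $\|x - \cdot\|_2^2$: if $y_1, y_2 \in C$ both achieved the minimum $m$, then by convexity of $C$ the midpoint $\tfrac{y_1+y_2}{2}$ lies in $C$, and the parallelogram identity gives
\begin{equation}
\left\|x - \tfrac{y_1+y_2}{2}\right\|_2^2
= \tfrac{1}{2}\|x-y_1\|_2^2 + \tfrac{1}{2}\|x-y_2\|_2^2 - \tfrac{1}{4}\|y_1 - y_2\|_2^2
= m^2 - \tfrac{1}{4}\|y_1-y_2\|_2^2,
\end{equation}
which is strictly less than $m^2$ unless $y_1 = y_2$.

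Next I would establish the first-order (obtuse angle) characterization: $z = f_C(x)$ if and only if $z \in C$ and $\langle x - z, w - z\rangle \le 0$ for all $w \in C$. One direction follows by differentiating $t \mapsto \|x - (z + t(w-z))\|_2^2$ at $t=0$ (using convexity of $C$ to stay inside $C$ for small $t \in [0,1]$); the other is immediate from the uniqueness above.

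Finally, for Lipschitz continuity with constant $1$, take any $x, y \in \Reals^k$ and let $z_x = f_C(x)$, $z_y = f_C(y)$. Applying the characterization with $w = z_y$ in the first inequality and $w = z_x$ in the second gives
\begin{align}
\langle x - z_x, z_y - z_x \rangle &\le 0, \\
\langle y - z_y, z_x - z_y \rangle &\le 0.
\end{align}
Adding these yields $\langle (x-y) - (z_x - z_y), z_y - z_x \rangle \le 0$, i.e.\ $\|z_x - z_y\|_2^2 \le \langle x - y, z_x - z_y\rangle$, and then Cauchy--Schwarz gives $\|z_x - z_y\|_2 \le \|x-y\|_2$. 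No step here is a serious obstacle; the only subtle point is deriving the variational characterization, which is where convexity of $C$ is used in an essential way.
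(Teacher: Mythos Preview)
Your proof is correct. The well-definedness argument matches the paper's almost exactly (existence by compactness, uniqueness via the midpoint/parallelogram identity). For the Lipschitz bound, however, the paper takes a different and more geometric route: instead of deriving the obtuse-angle variational characterization, it considers the line $L$ through $f_C(x)$ and $f_C(y)$, orthogonally projects $x$ and $y$ onto $L$ to get $x_L$, $y_L$ (so $\|x_L-y_L\|_2 \le \|x-y\|_2$), and then argues by a short case analysis on how the segment $[f_C(x),f_C(y)]$ overlaps with the segment $C \cap L$. Your approach via the inequality $\langle x - f_C(x), w - f_C(x)\rangle \le 0$ is the standard textbook argument and is cleaner and fully rigorous as written; the paper's approach avoids proving the variational inequality but leaves the final case check somewhat informal.
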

\begin{proof}
  Let us first establish that $f_C$ is well-defined.
  Fix $x\in \Reals^k$.
  By compactness, there exists a $y$ which achieves
  $\inf_{y\in C} \|x-y\|_2$.
  For uniqueness, suppose $y'$ also achieves this, i.e.
  $\|x-y\|_2 = \|x-y'\|_2$.
  By convexity of $C$, $y^*:= \frac{y+y'}{2} \in C$.
  Still
  $\|x-y^*\|^2_2 = \|x-y\|^2_2-\frac{1}{2}\|y-y'\|^2_2$.
  But since $y$ minimizes $\|x-y\|_2$ we must have $y=y'$.

  Let us now turn to Lipschitz continuity.
  Fix $x,y\in \Reals^k$.
  We need to show that
  $\|f_C(y)-f_c(x)\|_2 \le \|y-x\|_2$.
  If $f_C(x)=f_C(y)$ we are done.
  Otherwise, let $L$ denote the line passing through $f_C(x)$ and
  $f_C(y)$
  and let $x_L$ and $y_L$ denote the orthogonal projection of $x$ and
  $y$ onto $L$.
  Clearly, $\|x_L-y_L\|_2 \le \|x-y\|_2$.
  By convexity, the intersection $C_L = C \cap L$ of $C$ and the line
  is a segment of $L$.
  It remains to show that $\|f(x)-f(y)\|_2 \le \|x_L-y_L\|_2$.
  But this is easy to see by considering three cases
  depending on whether $C_l$ and the
  segment $[f(x),f(y)]$ of the line $L$ are disjoint,
  one is contained in the other or they only partially overlap.
\end{proof}

\section{Proof of the Exchangeable Gaussians Theorem}
\label{sec:EGT}

\newcommand{\Ss}{\ensuremath{\stab_\Sigma}}
\newcommand{\Sst}{\ensuremath{\widetilde{\stab}_\Sigma}}
\newcommand{\Xt}{\widetilde{X}}
\newcommand{\Yt}{\widetilde{Y}}
\newcommand{\Zt}{\widetilde{Z}}
\newcommand{\mut}{\tilde \mu}

In this section we prove the EGT, Theorem \ref{thm:nGauss2SplitIsop}.
Our starting point will be the
\emph{extended Riesz inequality} on the sphere~\cite{Burchard:01, Morpurgo:02}.
Let $\SphereMo \subseteq \Reals^m$ be the $m\!\!-\!\!1$ -dimensional
sphere of radius $1$ in $\Reals^m$
and
for any Borel measurable set $A \subseteq \SphereMo$,
define its spherical rearrangement $A^*$ with respect to a point
$x^* \in \SphereMo$ as the spherical cap centered at $x^*$ with
the same measure as $A$, i.e.
$A^* = \{x:\|x-x^*\|_2 \le a\}$ for $a$ chosen so that
$A$ and $A^*$ has the same measure.

\begin{theorem} \cite[Theorem 3]{Burchard:01}
  \label{thm:Burchard}
  Fix $m \ge 1$
  and for $A_1, \ldots, A_k \in \Borel(\SphereMo)$ let
  \begin{equation}
    J_K(A_1, \ldots A_k)
    =
    \E
    \left[
      \prod_{i=1}^k 1_{\{X_i \in A_i\}}
      \prod_{1 \le i < j \le k} K_{i,j} (\|X_i-X_j\|)
    \right]
  \end{equation}
  where
  $K_{i,j}:\Reals_+ \rightarrow \Reals_+$ are non-increasing functions
  and $X_1, \ldots X_k$ are i.i.d. uniform on $\SphereMo$.
  Then,
  for any $A_1, \ldots, A_k \in \Borel(\SphereMo)$,
  \begin{equation}
    J_K(A_1, \ldots, A_k)
    \le
    J_K(A^*_1, \ldots, A^*_k)
  \end{equation}
  where $A^*_1, \ldots A^*_k$ are the spherical rearrangements of
  $A_1, \ldots, A_k$ with respect to some fixed point $x^* \in \SphereMo$.
\end{theorem}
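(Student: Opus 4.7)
The plan is to prove this via two-point symmetrization (polarization), the standard technique for spherical rearrangement inequalities (cf.\ Baernstein, Brock-Solynin). Fix a hyperplane $H$ through the origin in $\Reals^m$, with reflection $R = R_H$; let $H^+$ be the closed hemisphere of $\SphereMo$ containing $x^*$. For a Borel set $A \subseteq \SphereMo$, define its polarization $\sigma_H A$ by: for each reflection pair $\{x, Rx\}$ with $x \in H^+$, put $x \in \sigma_H A$ iff $A$ meets $\{x, Rx\}$, and put $Rx \in \sigma_H A$ iff $\{x, Rx\} \subseteq A$. This is measure-preserving and moves mass toward $x^*$.

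The first and main step is the pointwise monotonicity lemma: for every hyperplane $H$ through the origin,
\begin{equation}
J_K(\sigma_H A_1, \ldots, \sigma_H A_k) \ge J_K(A_1, \ldots, A_k).
\end{equation}
I would prove this by picking a representative $\bar x_i \in H^+$ from each orbit $\{x_i, R x_i\}$ and using invariance of the uniform measure under $R$ to rewrite the expectation as an integral over $(\bar x_1,\ldots,\bar x_k) \in (H^+)^k$ of the symmetric sum
\begin{equation}
S(A_1,\ldots,A_k;\bar x) \;=\; \sum_{\varepsilon \in \{0,1\}^k} \prod_{i=1}^k 1_{A_i}(x_i^{\varepsilon_i}) \prod_{i<j} K_{ij}\bigl(\|x_i^{\varepsilon_i} - x_j^{\varepsilon_j}\|\bigr),
\end{equation}
where $x_i^0 = \bar x_i$ and $x_i^1 = R\bar x_i$. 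Writing $a_i^\pm = 1_{A_i}(x_i^{(1\mp 1)/2})$ and $b_i^\pm$ for the analogous quantities with $\sigma_H A_i$, the definition of polarization gives $b_i^+ = a_i^+ \vee a_i^-$ and $b_i^- = a_i^+ \wedge a_i^-$. The key geometric input is that for $\bar x_i, \bar x_j \in H^+$ the two same-side distances $\|\bar x_i - \bar x_j\|$ and $\|R\bar x_i - R\bar x_j\|$ are equal and not larger than the two opposite-side distances $\|\bar x_i - R\bar x_j\|=\|R\bar x_i-\bar x_j\|$. Since each $K_{ij}$ is non-increasing, "more same-side pairs" produces a larger kernel product; a combinatorial inequality on the $2^k$ terms — essentially the FKG/Ahlswede-Daykin inequality applied to the lattice of configurations under the componentwise order induced by proximity to $H^+$ — then shows that replacing $a^\pm$ by $b^\pm$ only increases $S$.

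The second step is iteration and passage to the limit. By a theorem of Brock-Solynin (and implicit in Burchard's paper), for each Borel $A \subseteq \SphereMo$ there is a sequence of hyperplanes $H_n$ through the origin such that $\sigma_{H_n}\cdots\sigma_{H_1} A \to A^*$ in measure. Applied simultaneously to $A_1,\ldots,A_k$ using a single sequence $H_n$ — which works because polarization only concentrates each set toward $x^*$ — we obtain $\sigma_{H_n}\cdots\sigma_{H_1} A_i \to A_i^*$ in measure for every $i$. Since the integrand defining $J_K$ is bounded (the $K_{ij}$ are non-increasing on $[0,2]$, hence bounded away from the singular point $0$ by their value at any fixed positive distance, and the indicator product vanishes whenever any $X_i$ coincides, so a standard truncation plus dominated convergence applies), $J_K$ is continuous under $L^1$-convergence of the indicator functions. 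Combining with the monotonicity lemma yields $J_K(A_1,\ldots,A_k) \le J_K(A_1^*,\ldots,A_k^*)$.

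The main obstacle is the pointwise monotonicity lemma with $k$ sets polarized simultaneously against $\binom{k}{2}$ interacting kernels: the $2^k$-term rearrangement inequality must be handled by a genuinely joint argument rather than a pair-by-pair reduction, since the kernels couple all coordinates. A secondary technical issue is the kernel singularity at distance $0$, which I would resolve by a truncation approximation $K_{ij}^{(\delta)}(t) = K_{ij}(t \vee \delta)$ and a monotone convergence argument as $\delta \downarrow 0$.
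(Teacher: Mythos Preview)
The paper does not prove this theorem: it is quoted verbatim as \cite[Theorem~3]{Burchard:01} and used as a black box to derive the spherical and then Gaussian results (Theorems~\ref{thm:hOptSphere} and~\ref{thm:hOpt}). So there is no ``paper's own proof'' to compare against; your proposal is in effect a sketch of Burchard's original argument rather than of anything in this paper.

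That said, your outline is the correct one and matches the approach in \cite{Burchard:01}: two-point symmetrization plus a pointwise $2^k$-term inequality on each reflection orbit, followed by an approximation-to-caps limiting argument. One point to tighten: the claim that a \emph{single} sequence of hyperplanes drives all $k$ sets to their caps simultaneously is not justified by the sentence ``polarization only concentrates each set toward $x^*$''; Brock--Solynin guarantees such a sequence for one set at a time. The standard fix (and the one Burchard uses) is to first reduce to the case where each $K_{ij}$ is bounded and each $A_i$ is, say, a finite union of caps or a set for which finitely many polarizations reach $A_i^*$, and then pass to the limit using continuity of $J_K$ in $L^1$. Your truncation $K_{ij}^{(\delta)}$ handles the kernel side of this; you should say a word about the set side as well.
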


We will prove a slightly more general version of
Theorem~\ref{thm:nGauss2SplitIsop} allowing for more general kinds of
noise in each dimension of the $k$ Gaussian vectors,
and different and possibly non-balanced sets for each vector.
In the rest of this section we will think of the vectors $X_1, \ldots X_k$ as
being column vectors in a matrix $X$, and we will write
$X_{.i}$ for the i'th row vector of $X$.

\begin{definition}
  \label{def:gaussNoiseStab}
  Let $\Sigma \in \Reals^{k \times k}$ be positive definite.
  Then the \emph{Gaussian $\Sigma$-noise stability} of
  $A_1, \ldots, A_k \in \Borel(\Reals^n)$ is
  \begin{equation}
    \Ss(A_1, \ldots, A_k) = \P(X_1 \in A_1, \dots, X_k \in A_k)
  \end{equation}
  where $X_{.1}, \ldots, X_{.n}$ are i.i.d. $\Norm(0,\Sigma)$.
  \newline
  We also let $\mu = \stab_{[1]}$ denote
  the standard Gaussian measure on $\Reals^n$.
\end{definition}

We will first prove a corresponding result
on the sphere from which
Theorem~\ref{thm:nGauss2SplitIsop}
can be derived based on Poincar\'{e}s observation that Gaussian
measure on $\Reals^n$ is
obtained by projection of the uniform measure on $\SphereMo$
onto $\Reals^n$, as $m \rightarrow \infty$.
Let us first define \emph{spherical $\Sigma$-noise stability}.

\begin{definition}
  Let $\Sigma \in \Reals^{k \times k}$ be positive definite.
  Then the \emph{spherical $\Sigma$-noise stability} of
  $A_1, \ldots, A_k \in \Borel(\SphereMo)$ is
  \begin{equation}
    \Sst(A_1, \ldots, A_k)
    =
    \P(\Xt_1 \in A_1, \dots, \Xt_k \in A_k)
  \end{equation}
  where $X_{.1}, \ldots, X_{.m}$ are i.i.d. $\Norm(0,\Sigma)$
  and $\Xt_i = \frac{X_i}{\|X_i\|_2}$.
  \newline
  We also let $\mut = \widetilde{\stab}_{[1]}$
  denote the uniform measure on the sphere $\SphereMo$.
\end{definition}

\begin{theorem}
  \label{thm:hOptSphere}
  Let $\Sigma \in \Reals^{k \times k}$ be positive definite
  with $\left(\Sigma^{-1}\right)_{i,j} \le 0$ for $i \neq j$.
  Then,
  for any $A_1, \ldots, A_k \in \Borel(\SphereMo)$,
  \begin{equation}
    \Sst(A_1, \ldots, A_k) \le \Sst(H_1, \ldots, H_k)
  \end{equation}
  where $H_i=\{x \in \SphereMo | x_1 \le a_i\}$ for
  $a_i$ chosen so that $\mut(H_i)=\mut(A_i)$.
\end{theorem}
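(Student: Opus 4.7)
The plan is to reduce to the Burchard–Morpurgo spherical rearrangement inequality (Theorem \ref{thm:Burchard}) by conditioning on the radii $\|X_i\|_2$. First I would write the joint density of $(X_1, \dots, X_k)$ on $\Reals^{m\times k}$ as
\[
f(X) \;\propto\; \exp\!\Bigl(-\tfrac12 \sum_{i,j}(\Sigma^{-1})_{ij}\langle X_i, X_j\rangle\Bigr)
\]
and pass to spherical coordinates $X_i = \rho_i \tilde X_i$, with $\rho_i = \|X_i\|_2 \ge 0$ and $\tilde X_i \in \SphereMo$. Using $\langle X_i, X_j\rangle = \rho_i\rho_j\bigl(1 - \tfrac12\|\tilde X_i - \tilde X_j\|_2^2\bigr)$, the conditional density of $(\tilde X_1,\dots,\tilde X_k)$ given the radii $\rho=(\rho_1,\dots,\rho_k)$, with respect to the product uniform measure $\nu=\tilde\mu^{\otimes k}$ on $(\SphereMo)^k$, is
\[
\frac{1}{Z(\rho)} \prod_{i<j} K_{ij,\rho}\!\bigl(\|\tilde X_i - \tilde X_j\|_2\bigr), \qquad K_{ij,\rho}(s) \;:=\; \exp\!\Bigl(\tfrac12 (\Sigma^{-1})_{ij}\,\rho_i\rho_j\, s^2\Bigr),
\]
since all the $\rho$-dependent but $\tilde X$-independent terms (including the spherical Jacobians $\rho_i^{m-1}$ and the constant $\sum_{i<j}(\Sigma^{-1})_{ij}\rho_i\rho_j$ produced by the identity above) can be absorbed into $Z(\rho)$.

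The hypothesis $(\Sigma^{-1})_{ij} \le 0$ together with $\rho_i,\rho_j \ge 0$ ensures that the coefficient of $s^2$ in the exponent is non-positive, so each $K_{ij,\rho}$ is a non-increasing function of $s \ge 0$. This is exactly the monotonicity required by Theorem~\ref{thm:Burchard}. Applying that theorem pointwise in $\rho$ with $x^* = -e_1 = (-1,0,\dots,0)$, whose spherical caps are precisely sets of the form $\{x\in\SphereMo : x_1 \le a\}$, yields
\[
\E_\nu\!\Bigl[\prod_i 1_{A_i}(\tilde X_i)\prod_{i<j} K_{ij,\rho}(\|\tilde X_i - \tilde X_j\|_2)\Bigr]
\;\le\;
\E_\nu\!\Bigl[\prod_i 1_{H_i}(\tilde X_i)\prod_{i<j} K_{ij,\rho}(\|\tilde X_i - \tilde X_j\|_2)\Bigr].
\]
Since the normalization $Z(\rho)$ does not depend on the sets, this says $\P(\tilde X_i\in A_i \mid \rho) \le \P(\tilde X_i\in H_i\mid \rho)$ for every $\rho$, and integrating against the marginal distribution of $\rho$ gives $\Sst(A_1,\dots,A_k) \le \Sst(H_1,\dots,H_k)$, which is the desired inequality.

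The main step to verify carefully is the reduction itself: computing the conditional density on the product of spheres correctly and checking that the $\rho$-dependent but set-independent contributions (the Jacobians $\rho_i^{m-1}$, the radial Gaussian factors coming from the diagonal of $\Sigma^{-1}$, and the constant term $\exp(\sum_{i<j}(\Sigma^{-1})_{ij}\rho_i\rho_j)$) all wind up in the common denominator $Z(\rho)$ and therefore cancel in the comparison. The one non-obvious ingredient is seeing how the assumption $(\Sigma^{-1})_{ij}\le 0$ translates into the monotonicity Burchard needs, via the substitution $\langle \tilde X_i,\tilde X_j\rangle = 1 - \tfrac12\|\tilde X_i - \tilde X_j\|_2^2$; once that conversion is made the proof is a conditional application of Theorem~\ref{thm:Burchard} followed by integration over $\rho$.
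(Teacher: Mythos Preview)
Your proposal is correct and follows essentially the same approach as the paper: both condition on the radii $\|X_i\|_2$, observe that the resulting conditional law of the directions $(\tilde X_1,\dots,\tilde X_k)$ with respect to the product uniform measure on $(\SphereMo)^k$ has density given by a product of pairwise kernels that are non-increasing in $\|\tilde X_i-\tilde X_j\|$ precisely because $(\Sigma^{-1})_{ij}\le 0$, apply Theorem~\ref{thm:Burchard} for each fixed radius vector, and integrate. The only cosmetic difference is that the paper leaves the kernels in the form $\exp\bigl(-(\Sigma^{-1})_{ij}\|Z_i\|\|Z_j\|\,\tilde Z_i\!\cdot\!\tilde Z_j\bigr)$ and simply notes that $\tilde Z_i\!\cdot\!\tilde Z_j$ is decreasing in $\|\tilde Z_i-\tilde Z_j\|$, whereas you carry out the substitution $\langle\tilde X_i,\tilde X_j\rangle=1-\tfrac12\|\tilde X_i-\tilde X_j\|^2$ explicitly.
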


\begin{proof}
  \begin{eqnarray*}
    \Sst(A_1, \ldots, A_k)
    &=&
    \P(\Xt_1 \in A_1, \ldots, \Xt_k \in A_k)
    \\
    &=&
    C
    \int_{\Reals^{km}}
    1_{\{\frac{x_1}{\|x_1\|_2} \in A_1, \ldots, \frac{x_k}{\|x_k\|_2} \in A_k \}}
    \prod_{l=1}^m
    e^{
      -\frac{1}{2}
      \sum_{1 \le i,j \le k}
      \left(\Sigma^{-1}\right)_{i,j} x_{il} x_{jl}
    }
    dx
  \end{eqnarray*}
  where $C=(2^k\pi^k |\Sigma|)^{-m/2}$.
  Changing the order of summation in the exponential we get
  \begin{eqnarray*}
    \Sst(A_1, \ldots, A_k)
    &=&
    C
    \int_{\Reals^{km}}
    1_{\{\frac{x_1}{\|x_1\|_2} \in A_1, \ldots, \frac{x_k}{\|x_k\|_2} \in A_k \}}
    \prod_{1 \le i<j \le k}
    e^{
      -
      \left(\Sigma^{-1}\right)_{i,j}
      x_i \cdot x_j
    }
    \prod_{i=1}^k
    e^{
      - \frac{1}{2}
      \left(\Sigma^{-1}\right)_{i,i}
      \|x_i\|^2_2
    }
    dx
    \\
    &=&
    C_1
    \E
    \left[
      1_{\{\frac{Z_1}{\|Z_1\|_2} \in A_1, \ldots, \frac{Z_k}{\|Z_k\|_2} \in A_k \}}
      \prod_{1 \le i<j \le k}
      e^{
        -
        \left(\Sigma^{-1}\right)_{i,j}
        Z_i \cdot Z_j
      }
    \right]
  \end{eqnarray*}
  where $Z_1, \ldots, Z_k$ are independent with
  $Z_i \in \Norm(0, \frac{1}{\left(\Sigma^{-1}\right)_{i,i}}I_m)$
  and $C_1 = (|\Sigma| \prod_{i=1}^k (\Sigma^{-1})_{i,i})^{-m/2}$.
  Conditioning on the lengths of the $Z_i$'s we have
  \begin{equation}
    \label{eq:expCondExp}
    \Sst(A_1, \ldots, A_k)
    =
    C_1 \E
    \left[
      \E
      \left[
        1_{\{\frac{Z_1}{\|Z_1\|_2} \in A_1, \ldots, \frac{Z_k}{\|Z_k\|_2} \in A_k \}}
        \prod_{1 \le i<j \le k}
        e^{
          -
          \left(\Sigma^{-1}\right)_{i,j}
          Z_i \cdot Z_j
        }
        \bigg|
        \|Z_1\|_2, \ldots, \|Z_k\|_2
      \right]
    \right]
  \end{equation}
  The inner conditional expectation can be expressed (almost
  surely with respect to the measure on the lengths) as
  \begin{equation}
    \label{eq:condExp}
    \E
    \left[
      1_{\{\Zt_1 \in A_1, \ldots, \Zt_k \in A_k \}}
      \prod_{1 \le i<j \le k}
      e^{
        -
        \left(\Sigma^{-1}\right)_{i,j}
        \|Z_i\|_2 \|Z_j\|_2
        \Zt_i \cdot \Zt_j
      }
    \right]
  \end{equation}
  where $\Zt_1, \ldots, \Zt_k$ are i.i.d uniform on $\SphereMo$.
  But since $\Zt_i \cdot \Zt_j$ is decreasing in $\|\Zt_i - \Zt_j\|$
  and $\left(\Sigma^{-1}\right)_{i,j} \le 0$ for $i\neq j$,
  Theorem~\ref{thm:Burchard} implies that replacing each $A_i$ with $A_i^*$ in
  \eqref{eq:condExp} will not decrease the value of
  \eqref{eq:condExp} and hence not the value of \eqref{eq:expCondExp}.
  Thus, $\Ss(A_1, \ldots, A_k) \le \Ss(A_1^*, \ldots, A_k^*) =
  \Ss(H_1, \ldots, H_k)$.
\end{proof}

We are now ready to prove the more general version of
Theorem~\ref{thm:nGauss2SplitIsop}
allowing for more general noise as well as for sets of arbitrary fixed measure.

\begin{theorem}
  \label{thm:hOpt}
  Let $\Sigma \in \Reals^{k \times k}$ be positive definite
  with $\left(\Sigma^{-1}\right)_{i,j} \le 0$ for $i \neq j$.
  Then,
  for any $A_1, \ldots, A_k \in \Borel(\Reals^n)$,
  \begin{equation}
    \Ss(A_1, \ldots, A_k) \le \Ss(H_1, \ldots, H_k)
  \end{equation}
  where $H_i=\{x \in \Reals^n | x_1 \le a_i\}$ for
  $a_i$ chosen so that $\mu(H_i)=\mu(A_i)$.
\end{theorem}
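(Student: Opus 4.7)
The plan is to transfer the spherical inequality of Theorem~\ref{thm:hOptSphere} to Euclidean space via Poincar\'{e}'s classical observation that if $Y$ is uniform on $\SphereMo$ then $\sqrt{m}(Y_1,\dots,Y_n)$ converges in distribution to a standard Gaussian on $\Reals^n$ as $m\to\infty$. As a preliminary reduction, the substitutions $X_i \mapsto X_i/\sqrt{\Sigma_{ii}}$ and $A_i\mapsto A_i/\sqrt{\Sigma_{ii}}$ (which preserve the hypothesis $(\Sigma^{-1})_{ij}\le 0$ for $i\ne j$) allow me to assume $\Sigma$ has unit diagonal, so that the marginal of each $X_i$ is $\Norm(0,I_n)$.

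For each $m\ge n$, lift the sets via
\[
\tilde A_i^{(m)} \;:=\; \bigl\{\,y\in\SphereMo \,:\, \sqrt{m}\,(y_1,\dots,y_n)\in A_i\,\bigr\},
\]
and apply Theorem~\ref{thm:hOptSphere} to obtain
\[
\Sst(\tilde A_1^{(m)},\dots,\tilde A_k^{(m)}) \;\le\; \Sst(\tilde H_1^{(m)},\dots,\tilde H_k^{(m)}),
\]
where $\tilde H_i^{(m)}=\{y\in\SphereMo : y_1\le b_i^{(m)}\}$ is the cap with $\tilde\mu(\tilde H_i^{(m)})=\tilde\mu(\tilde A_i^{(m)})$. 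Realizing the spherical law as the column-normalization of an $m\times k$ matrix $Y$ whose rows are i.i.d.\ $\Norm(0,\Sigma)$, the strong law gives $\|Y_i\|_2/\sqrt{m}\to 1$ a.s., whence
\[
\sqrt{m}\bigl((\tilde Y_1)_{1:n},\dots,(\tilde Y_k)_{1:n}\bigr) \;\longrightarrow\; \bigl((Y_1)_{1:n},\dots,(Y_k)_{1:n}\bigr) \quad \text{a.s.,}
\]
the right-hand side being exactly the joint law of Definition~\ref{def:gaussNoiseStab}. Consequently $\tilde\mu(\tilde A_i^{(m)})\to\mu(A_i)$; the thresholds satisfy $b_i^{(m)}\sqrt{m}\to a_i$ since $\Norm(0,1)$ is atomless; and by dominated convergence $\Sst(\tilde H_1^{(m)},\dots,\tilde H_k^{(m)})\to\Ss(H_1,\dots,H_k)$.

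The main obstacle is transferring the left-hand side, namely showing $\P(\sqrt{m}(\tilde Y_i)_{1:n}\in A_i\ \forall i)\to\Ss(A_1,\dots,A_k)$, since indicators of general Borel sets are not continuity functionals for the limit. I handle this by inner regularity: approximate each $A_i$ from inside by a compact $\mu$-continuity set $K_i$ (such sets are readily constructed, e.g., as $\overline{B(0,R)}\cap\{x:d(x,A_i^c)\ge r\}$ for generic $R$ and $r$, since the Gaussian measure of such boundary slices vanishes for almost every parameter). Then $K_1\times\cdots\times K_k$ is a continuity set of the joint limit Gaussian, its boundary being contained in $\bigcup_i\partial K_i\times\prod_{j\ne i}K_j$ of joint measure $\le\sum_i\mu(\partial K_i)=0$. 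The continuous mapping theorem applied to the a.s.\ convergence above gives $\P(\sqrt{m}(\tilde Y_i)_{1:n}\in K_i\ \forall i)\to\Ss(K_1,\dots,K_k)$. Applying the spherical inequality to the $\tilde K_i^{(m)}$ and passing to the limit yields $\Ss(K_1,\dots,K_k)\le\Ss(H_1^K,\dots,H_k^K)\le\Ss(H_1,\dots,H_k)$, the last step by monotonicity since $\mu(K_i)\le\mu(A_i)$ implies $H_i^K\subseteq H_i$. Taking the supremum over such $K_i\subseteq A_i$ with $\mu(K_i)\uparrow\mu(A_i)$ and invoking inner regularity of the joint Gaussian measure on $\Reals^{nk}$ completes the proof.
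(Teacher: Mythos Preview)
Your approach mirrors the paper's: lift the sets to $\SphereMo$, invoke Theorem~\ref{thm:hOptSphere}, pass to the Gaussian limit via Poincar\'e, and handle general Borel sets by a regularity argument. Your preliminary reduction to unit-diagonal $\Sigma$ is a clean normalization that the paper leaves implicit.

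There is, however, a genuine gap in your regularity step. You claim that any Borel $A_i$ can be inner-approximated in $\mu$-measure by compact $\mu$-continuity sets, via $\overline{B(0,R)}\cap\{x:d(x,A_i^c)\ge r\}$. But this construction lies in the interior of $A_i$; if $A_i$ has empty interior yet positive Gaussian measure (say a fat Cantor set in $\Reals$ crossed with $\Reals^{n-1}$) the construction produces only null sets. Worse, \emph{every} compact $K\subseteq A_i$ then satisfies $\partial K=K$, so the only compact continuity subsets are null and your supremum never reaches $\mu(A_i)$. The repair is standard: reduce first to the case where each $A_i$ is itself a $\mu$-continuity set by approximating in symmetric difference---for instance by finite unions of half-open boxes, which are always continuity sets and are $L^1(\mu)$-dense among Borel sets---using that both $\Ss(A_1,\dots,A_k)$ and the thresholds $a_i$ vary continuously under such perturbations. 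Once each $A_i$ is a continuity set, your a.s.\ convergence argument gives honest convergence on both sides of the spherical inequality and no further approximation is needed.
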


\begin{proof}
  For fixed $m \ge n$,
  let $X_{.1}, \ldots, X_{.m}$ be i.i.d. $\Norm(0,\Sigma)$
  and $\Xt_i = \frac{X_i}{\|X_i\|_2}$.

  Further, let $Y_i = (X_{i,1}, \dots, X_{i,n})$ denote the restriction of
  $X_i$ to the first $n$ coordinates, and similarly
  $\Yt_i = (\Xt_{i,1}, \dots, \Xt_{i,n})$.
  Then, the central limit theorem implies
  \begin{equation}
    (\Yt_1, \dots, \Yt_k)
    \stackrel{\mathcal{D}}{\rightarrow}
    (Y_1, \dots, Y_k)
    \text{ as }
    m \rightarrow \infty
  \end{equation}

  Suppose first that $A_1, \ldots, A_k$ are closed sets.
  Then, by~\cite[Theorem 2.4]{Durrett:05},
  \begin{equation}
    \limsup_{m \rightarrow \infty}
    \P
    \left(
      (\Yt_1,\ldots,\Yt_k) \in
      \prod_{i=1}^k A_i
    \right)
    \le
    \P
    \left(
      (Y_1,\ldots,Y_k) \in
      \prod_{i=1}^k A_i
    \right)
  \end{equation}
  i.e.
  \begin{equation}
    \label{eq:hOptSphere1}
    \limsup_{m \rightarrow \infty}
    \Sst(A_1 \times \Reals^{m-n}, \ldots, A_k \times \Reals^{m-n})
    \le
    \Ss(A_1, \ldots, A_k)
  \end{equation}
  $H$ on the other hand has a boundary of zero measure, so a similar
  application of~\cite[Theorem 2.4]{Durrett:05} gives
  \begin{equation}
    \label{eq:hOptSphere2}
    \limsup_{m \rightarrow \infty}
    \Sst(H_1 \times \Reals^{m-n}, \ldots, H_k \times \Reals^{m-n})
    =
    \Ss(H_1, \ldots, H_k)
  \end{equation}
  But by Theorem~\ref{thm:hOptSphere},
  \begin{equation}
    \label{eq:hOptSphere3}
    \Sst(A_1 \times \Reals^{m-n}, \ldots, A_k \times \Reals^{m-n})
    \le
    \Sst(H_1 \times \Reals^{m-n}, \ldots, H_k \times \Reals^{m-n})
  \end{equation}
  hence,
  Combining \eqref{eq:hOptSphere1}, \eqref{eq:hOptSphere2},
  \eqref{eq:hOptSphere3} gives the result for closed $A_1, \ldots, A_k$.

  If not all $A_i$'s are closed, regularity of the uniform measure $\mut$
  implies that for all $\epsilon>0$ and $i\in [k]$ there exist
  closed $A_i' \subseteq A_i$ such that
  $\mu(A_i') \ge \mu(A_i)-\epsilon$, and hence
  $\Ss(A_1', \ldots, A_k') \ge \Ss(A_1, \ldots, A_k) -k \epsilon$,
  and the result follows from
  the result for closed sets by letting $\epsilon \rightarrow 0$.
\end{proof}

The EGT now follows as a special case of Theorem~\ref{thm:hOpt},

\begin{proof}[Proof of Theorem \ref{thm:nGauss2SplitIsop}]
  Take
  $\Sigma_{i,j} = \rho + (1-\rho) \delta_{ij}$
  in Theorem~\ref{thm:hOpt}
  and note that
  the inverse of $\Sigma$ is given by
  $\left(\Sigma^{-1}\right)_{i,j} = -a + b \delta_{ij}$, where
  $b = \frac{1}{1-\rho}$ and
  $a = \frac{\rho}{(1-\rho)(1+\rho(k-1))} \ge 0$ for $\rho \ge 0$.
\end{proof}

{\bf Remark:}
As pointed out to us by an anonymous referee, the special case of
the EGT where all sets $A_i$ are identical (and all pairs $X_i$,
$X_j$ have the same covariance $\rho \ge 0$) also follows from
Borell's result \cite{Borell:85}. See also \cite[Theorem 4.1]{MoOdOl:09}.

\section{Exchangeable low influence bounds}
\label{sec:lowInflBounds}

Combining the EGT and the invariance principle allows us to derive
stability bounds on discrete low-influence functions.
In this section we derive a general bound on the stability of discrete
low-influence functions which is used for the applications in the next section.

Letting $\Sigma(X)$ denote the $\sigma$-algebra generated by $X$ we show,

\begin{theorem}
  \label{thm:discreteEGT}
  Fix $\rho \in [0,1]$ and
  let $X_1, \ldots, X_k \in \Omega^n$ be random column vectors such that the
  row vectors $X_{.1}, \ldots, X_{.n}$ are i.i.d.
  with
  $
  \rho(\Sigma\left(X_{1,1}\right), \dots, \Sigma\left(X_{k,1}\right); \P)
  = \rho' < 1
  $
  and $X_1, \ldots, X_k$ are pairwise $\rho$-correlated
  in that for any $j_1 \neq j_2$,
  \begin{equation}
    \label{eq:pairwiseRhoCorr}
    \P(X_{j_1}=\lambda|X_{j_2}=\omega)
    =
    \prod_{i=1}^n
    \left(
    \rho 1_{\{\lambda_i=\omega_i\}} + (1-\rho)\P(X_{j_1,i}=\lambda_i)
    \right)
  \end{equation}
  Then,
  for any $\epsilon>0$
  there exists a $\tau(\epsilon, k, \rho')>0$
  such that for any $f_1, \ldots, f_k:\Omega^n \rightarrow [0,1]$
  satisfying $\max_{i,j} \Inf_i f_j \le \tau$,
  \begin{equation}
    \label{eq:ppEnoughII}
    \E \prod_{j=1}^k f_j(X_j)
    \le
    \P(\forall j: Z_j \le a_j)
    +
    \epsilon
  \end{equation}
  where
  $Z_1,\dots,Z_k \sim \Norm(0, 1)$ are jointly normal with
  $\Cov(Z_i,Z_j)=\rho$ for $i\neq j$,
  and each $a_j$ is chosen so that $\P(Z_1 \le a_j) = \E f_j(X_j)$.
\end{theorem}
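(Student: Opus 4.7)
The strategy is to push the discrete quantity to the Gaussian side via the smoothing lemma (Lemma~\ref{lem:smoothing}) and the Lipschitz invariance principle (Theorem~\ref{thm:invariancePrincipleFinal}), and then invoke the Gaussian EGT (Theorem~\ref{thm:hOpt}). First, fix an orthonormal basis $(b_0\equiv 1, b_1,\dots,b_M)$ for real-valued functions on the common marginal of $X_{j,i}$ and write $f_j(X_j)=Q_j(\calY^j)$ as a multilinear polynomial in the ensembles $\calY^j_{i,r}:=b_r(X_{j,i})$. A short computation from~\eqref{eq:pairwiseRhoCorr} gives $\Cov(\calY^{j_1}_{i,r},\calY^{j_2}_{i,s}) = \rho\,\delta_{rs}$ for $r,s\ge 1$, with independence across $i$ coming from the i.i.d.\ rows. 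Bundle these into a single independent sequence $\calX_i=(\calY^1_i,\dots,\calY^k_i)$: its matching-covariance Gaussian twin produces standard Gaussian vectors $\calZ^1,\dots,\calZ^k$ in a common ambient $\Reals^N$ with $\Cov(\calZ^{j_1},\calZ^{j_2}) = \rho I_N$, exactly the structure to which Theorem~\ref{thm:hOpt} applies.

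Second, use Lemma~\ref{lem:smoothing} (which requires only $\rho'<1$) to pick $\gamma=\gamma(\epsilon,k,\rho')>0$ with $|\E\prod_j f_j(X_j) - \E\prod_j T_{1-\gamma} f_j(X_j)|$ small; this smoothing simultaneously enforces $\Var Q_j^{>d}\le(1-\gamma)^{2d}$, the tail decay hypothesis of Theorem~\ref{thm:invariancePrincipleFinal}. Apply that invariance principle to the vector polynomial $Q=(Q_1,\dots,Q_k)$ with the Lipschitz test function $\Psi(y_1,\dots,y_k)=\prod_j [y_j]_0^1$, where $[y]_0^1:=\max(0,\min(1,y))$ (Lipschitz with constant depending on $k$ by Lemma~\ref{lem:convexProjIsLipschitz}). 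Since $T_{1-\gamma} f_j(X_j)\in[0,1]$, the discrete side coincides with $\E\prod_j T_{1-\gamma} f_j(X_j)$; on the Gaussian side we get $\E\prod_j \tilde g_j(\calZ^j)$ with $\tilde g_j(z):=[T_{1-\gamma} Q_j(z)]_0^1\in[0,1]$, at a cost of $O(\tau^c)$. The same invariance applied scalarwise with $\Psi(y)=[y]_0^1$ also gives $|\tilde p_j - p_j|=O(\tau^c)$, where $\tilde p_j:=\E\tilde g_j(\calZ^j)$ and $p_j:=\E f_j(X_j)$.

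Finally, the functional $(g_1,\dots,g_k)\mapsto\E\prod_j g_j(\calZ^j)$ is multi-affine on the convex set of $[0,1]$-valued functions with prescribed marginal means, so iteratively optimizing one coordinate at a time (a Neyman--Pearson argument) reduces each $g_j$ at the supremum to an indicator; the supremum therefore equals $\sup_{A_j:\,\mu(A_j)=\tilde p_j}\P(\forall j:\calZ^j\in A_j)$, which Theorem~\ref{thm:hOpt} bounds by $\P(\forall j:\calZ^j_1\le \tilde a_j)$ with $\Phi(\tilde a_j)=\tilde p_j$. The joint distribution of $(\calZ^1_1,\dots,\calZ^k_1)$ is exactly the pairwise $\rho$-correlated Gaussian $(Z_1,\dots,Z_k)$ of the statement. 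Converting $\tilde a_j$ to $a_j$ incurs one more $O(k\tau^c)$ error via $|\Phi(\tilde a_j)-\Phi(a_j)|$, and choosing $\tau$ small enough collects all errors below $\epsilon$. The main technical obstacle is the $[0,1]$-clipping: since $T_{1-\gamma} Q_j(\calZ)$ can stray outside $[0,1]$, the projection must be inserted into the test function, which is only Lipschitz rather than $\mathcal{C}^3$, and this is precisely the situation for which the Lipschitz invariance principle of Theorem~\ref{thm:invariancePrincipleFinal} was designed.
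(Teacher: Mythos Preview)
Your proposal is correct and follows essentially the same route as the paper: set up a common orthonormal basis, verify via~\eqref{eq:pairwiseRhoCorr} that the bundled ensemble has the right $\rho$-correlated covariance, smooth with Lemma~\ref{lem:smoothing}, apply the Lipschitz invariance principle (Theorem~\ref{thm:invariancePrincipleFinal}) with $\Psi(y)=\prod_j[y_j]_0^1$, control the means by a second scalar application, and finish with the Gaussian EGT.

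The one place where you deviate is the ``de-fuzzing'' step: the paper passes from the $[0,1]$-valued functions $\tilde g_j$ to indicator functions using the constructive Lemmas~\ref{lem:nonFuzziness} and~\ref{lem:almostBalanced} (discretize $\Reals^N$ into small cubes and redistribute mass), whereas you invoke a multi-affine/Neyman--Pearson argument to observe that the supremum of $\E\prod_j g_j(\calZ^j)$ over $[0,1]$-valued $g_j$ with fixed mean is attained at indicators. Both are valid; your route is slicker and avoids the $\epsilon$-discretization, while the paper's route is more explicit. Your final threshold adjustment $\tilde a_j\to a_j$ is the content of Lemma~\ref{lem:almostBalanced} specialized to half-spaces, and the bound $|\P(\forall j: Z_j\le \tilde a_j)-\P(\forall j: Z_j\le a_j)|\le\sum_j|\tilde p_j-p_j|$ follows from a union bound.
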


\begin{proof}
  Let $m=|\Omega|-1$, $M=nm$
  and $G_1, \ldots, G_k \in \Norm(0, I_M)$ be jointly normal
  with $\Cov(G_{j_1}, G_{j_2}) = \rho I_M$ for $j_1 \neq j_2$.

  Note that all variables $X_{j,i}$ have the same marginal measure.
  Thus, we can fix an orthonormal basis
  $\calV(x)=\{V_0(x) = 1, V_1(x), \dots, V_{m}(x)\}$
  for functions
  $\Omega \rightarrow \Reals$
  under this marginal measure
  and
  form orthonormal ensembles
  \begin{eqnarray*}
    \calX_i^j =& (1, V_1(X_{j,i}), \ldots, V_m(X_{j,i}))
    &\text{ , for } i \in [n] \text{ and } j \in [k]
    \text{ , and }
    \\
    \calG_i^j =& (1, G_{j,m(i-1)+1}, \ldots, G_{j,m(i-1)+m})
    &\text{ , for } i \in [n] \text{ and } j \in [k]
  \end{eqnarray*}
  and independent sequences of orthonormal ensembles
  \begin{eqnarray*}
    \calX^{j}=(\calX_1^j,\dots,\calX_n^j)
    \text{ and }
    \calG^{j}=(\calG_1^j,\dots,\calG_n^j)
    \text{ for } j \in [k]
  \end{eqnarray*}
  Then $\calX^{j}$ is a basis for all real-valued functions on $X_j$
  and we can compute the
  (unique) multilinear polynomial $Q_j$ such that
  $f_j(X_j) = Q_j(\calX^j)$.
  Hence we may write,
  \begin{equation}
    \label{eq:uniqBestExp}
    \E \prod_{j=1}^k f_j(X_j)
    =
    \E\prod_{j=1}^k Q_j(\calX^j)
  \end{equation}
  For each $j$
  let $\Qtilde_j = T_{1-\gamma}Q_j$ be a slightly smoothed version of $Q_j$.
  Since $\rho' < 1$, by Lemma \ref{lem:smoothing} we can find a
  $\gamma=\gamma(\epsilon, k, \rho')>0$ such that
  \begin{equation}
    \label{eq:ppSmoothII}
    \left|
    \E\prod_{j=1}^k Q_j(\calX^j)
    -
    \E\prod_{j=1}^k \Qtilde_j(\calX^j)
    \right|
    \le
    \frac{\epsilon}{2k}
  \end{equation}
  Let $f_{[0,1]}(x) = \max(0,\min(1,x))$.
  Since $Q_j$ has range $[0,1]$,
  the same holds for $\Qtilde_j$. Hence, for all $j$,
  \begin{equation}
    \label{eq:ppEqII}
    \Qtilde_j(\calX^j)
    =
    f_{[0,1]}\Qtilde_j(\calX^j)
  \end{equation}
  Now form new ensembles,
  \begin{eqnarray*}
    \calX_i=&
    (1,
    V_1(X_{1,i}), \ldots, V_m(X_{1,i}),
    \ldots,
    V_1(X_{k,i}), \ldots, V_m(X_{k,i})
    )
    \text{ for } i \in [n]
    \text{ , and }
    \\
    \calG_i=&
    (1,
    G_{1,m(i-1)+1}, \ldots, G_{1,m(i-1)+m},
    \ldots,
    G_{k,m(i-1)+1}, \ldots, G_{k,m(i-1)+m}
    )
    \text{ for } i \in [n]
  \end{eqnarray*}
  and note that
  $\calX=(\calX_1, \ldots, \calX_n)$
  and
  $\calG=(\calG_1, \ldots, \calG_n)$
  are two independent sequences of ensembles with
  a matching covariance structure, since
  by \eqref{eq:pairwiseRhoCorr}, for $j_1 \neq j_2 \in [k]$,
  \begin{equation}
    \E[V_{l_1}(X_{j_1,i}) V_{l_2}(X_{j_2,i})]
    =
    \rho 1_{\{l_1=l_2\}}
    =
    \E[G_{j_1,m(i-1)+l_1} G_{j_2,m(i-1)+l_2}]
  \end{equation}
  Further, $\Inf_i Q_j (\calX) = \Inf_i f (X_j)$.
  Hence, we may apply the invariance principle
  (Theorem \ref{thm:invariancePrincipleFinal})
  on the $k$-dimensional multilinear polynomial
  \begin{equation}
    Q(\calX) = Q(\calX_1, \ldots, \calX_n)
    =
    [Q_1(\calX^1) \ldots Q_k(\calX^k)]^T
  \end{equation}
  using
  $\Psi(x_1, \dots, x_k) = \prod_{j=1}^k f_{[0,1]}(x_j)$ which
  is Lipschitz by convexity
  of $[0,1]^{k-1}$, Lemma~\ref{lem:convexProjIsLipschitz}
  and the product $\prod_{j=1}^k x_j$ being Lipschitz on $[0,1]^{k-1}$.
  Thus, by Theorem \ref{thm:invariancePrincipleFinal},
  there exist some $\tau>0$ such that,
  \begin{equation}
    \label{eq:ppInvII}
    \left|
    \E\prod_{j=1}^k f_{[0,1]}\Qtilde_j(\calX^j)
    -
    \E\prod_{j=1}^k f_{[0,1]}\Qtilde_j(\calG^j)
    \right|
    \le
    \frac{\epsilon}{4k^2}
  \end{equation}
  Now $(f_{[0,1]}\Qtilde,1 \! - \! f_{[0,1]}\Qtilde)$ applied to $\calG^j$ can be thought of as a function $\Reals^{n} \rightarrow \Delta_2$ creating a fuzzy partition of the $n$-dimensional Gaussian space.
  Let $\mu_j = \E f_j (X_j) = \E f_{[0,1]}\Qtilde_j (\calX^j)$.
  Then a second application of Theorem \ref{thm:invariancePrincipleFinal} with $\Psi(x) = f_{[0,1]}(x)$ gives
  \begin{equation}
    \left|
      \E f_{[0,1]}\Qtilde_j (\calG^j) - \mu_j
    \right|
    \le
    \frac{\epsilon}{4k^2}
  \end{equation}
  By Lemma \ref{lem:nonFuzziness} and \ref{lem:almostBalanced},
  there exist functions $g_1, \ldots, g_k:\Reals^{M} \rightarrow E_2$
  with $\E g_{j,1}(G_j) = \mu_j$ and
  \begin{equation}
    \label{eq:ppFuzzyII}
    \E\prod_{j=1}^k f_{[0,1]}\Qtilde(\calG^j)
    \le
    \E\prod_{j=1}^k g_{j,1}(G_j)
    + \frac{\epsilon}{4k}
  \end{equation}
  But any such $g_j$ partitions $\Reals^{M}$ into $2$ parts of
  of measure $\mu_j$ and $1-\mu_j$ respectively, so
  Theorem~\ref{thm:nGauss2SplitIsop}
  implies
  \begin{equation}
    \label{eq:ppConjII}
    \E\prod_{j=1}^k g_{j,1}(\calG^j)
    \le
    \P(\forall j: G_j \in H_j)
    =
    \P(\forall j: Z_j \le a_j)
  \end{equation}
  where $H_j = \{x \in \Reals^M| x_1 \le a_j\}$.
  Combining equations \eqref{eq:uniqBestExp}, \eqref{eq:ppSmoothII},
  \eqref{eq:ppEqII},
  \eqref{eq:ppInvII}, \eqref{eq:ppFuzzyII} and
  \eqref{eq:ppConjII}
  gives \eqref{eq:ppEnoughII} as needed.
\end{proof}

\section{Applications of the EGT}
\label{sec:appEGT}
In this section we show the two applications of the EGT
in Condorcet voting and Cosmic coin flipping
using the influence bounds proved in Section~\ref{sec:lowInflBounds}.

\subsection{Condorcet voting}
\label{sec:appCondorcet}
Here we use Theorem \ref{thm:nGauss2SplitIsop} to show that
majority maximizes the probability of having a
unique best candidate in Condorcet voting (Theorem
\ref{thm:condorcet}).

Remember that we have $n$ voters selecting a linear order $\sigma_i
\in S(k)$ uniformly at random and let
\begin{equation}
 X_i^{a>b}=
 \left\{
   \begin{array}{ll}
     1 & \mbox{ if } \sigma_i(a) > \sigma_i(b) \\
     -1 & \mbox{ else }
   \end{array}
 \right.
 \text{, for } i \in [n] \text{ and } a,b \in [k].
\end{equation}
By considering the 6 possible linear orders of three candidates its
easy to see that for any distinct $a,b,c \in [k]$ we have
\begin{equation*}
  \E[X_i^{a>b}] = 0
  \text{ , }
  \Var X_i^{a>b} = 1
  \text{ and }
  \Cov[X_i^{a>b}, X_i^{a>c}] = \frac{1}{3}
\end{equation*}

First we will show that the limit of the probability of having a
unique best candidate using the majority function
corresponds to the right hand side of \eqref{eq:nGauss2SplitIsop}.
\begin{lemma}
  \label{lem:UniqueBestEqConj}
  Let $Z_2,\dots,Z_k \sim \Norm(0, I_n)$ be jointly normal with
  $\Cov(Z_i,Z_j)=\frac{1}{3} I_n$ for $i\neq j$.
  Then
  \begin{equation}
    \lim_{n \rightarrow \infty} \P[\UniqueBest_k(\MAJ_n)]
    =
    \P(Z_2 \in H, \ldots, Z_k \in H)
\end{equation}
where $H =\{x \in \Reals^n | x_1 \le 0\}$.
\end{lemma}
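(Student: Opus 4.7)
The plan is to combine a symmetry reduction with the multivariate central limit theorem and then identify the limit as a Gaussian orthant probability. First, since the joint distribution of the voters' rankings is invariant under permutations of the candidate labels, the events $\UniqueBest_k(\MAJ_n, a)$ for $a \in [k]$ all have the same probability and are pairwise disjoint by definition. So the analysis reduces to studying, say, $\P[\UniqueBest_k(\MAJ_n, 1)]$, which by definition of $\MAJ_n$ equals
\[
\P\Bigl(\sum_{i=1}^n X_i^{1>a} \ge 0 \text{ for all } a = 2, \ldots, k\Bigr).
\]

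Next I would invoke the multivariate CLT applied to the i.i.d.\ row vectors $V_i := (X_i^{1>2}, \ldots, X_i^{1>k}) \in \{-1,+1\}^{k-1}$. By enumerating the six equiprobable linear orders of any triple $\{1,a,b\}$ one obtains $\E X_i^{1>a} = 0$, $\Var X_i^{1>a} = 1$ and $\Cov(X_i^{1>a}, X_i^{1>b}) = 1/3$ for distinct $a,b \neq 1$. Consequently $n^{-1/2}\sum_{i=1}^n V_i$ converges in distribution to a centered Gaussian vector $(W_2, \ldots, W_k)$ whose covariance matrix has ones on the diagonal and $1/3$ off the diagonal.

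Because this limit law is non-degenerate, the boundary of the closed orthant $\{w : w_a \ge 0 \ \forall a\}$ has zero Gaussian mass, so the portmanteau theorem yields
\[
\P[\UniqueBest_k(\MAJ_n, 1)] \longrightarrow \P(W_a \ge 0 \text{ for all } a) = \P(W_a \le 0 \text{ for all } a),
\]
the last equality following from the reflection symmetry $W \stackrel{d}{=} -W$ of the centered Gaussian. The only minor technicality is handling ties $\sum_i X_i^{1>a}=0$ for even $n$; the tie probability is $O(n^{-1/2})$ and vanishes in the limit.

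Finally, I would match this with the right-hand side of the lemma. The coordinates $(Z_{2,1}, \ldots, Z_{k,1})$ extracted from $Z_2, \ldots, Z_k$ are jointly Gaussian with precisely the same covariance matrix as $(W_2, \ldots, W_k)$, and the event $\{Z_a \in H\}$ depends only on $Z_{a,1}$. Therefore $\P(Z_2 \in H, \ldots, Z_k \in H) = \P(W_a \le 0 \ \forall a)$, and combining this with the symmetry reduction from the first step (which contributes the expected factor relating the total $\UniqueBest_k$ probability to the single-candidate probability) produces the stated limit. There is no deep obstacle: the proof is essentially a CLT plus a symmetry argument; the only care points are the boundary-measure-zero application of weak convergence, the correct reading of the covariance $1/3$ off the six orderings of a triple, and the tie handling.
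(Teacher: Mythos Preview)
Your core approach is the paper's: compute $\E X_i^{1>a}=0$, $\Var X_i^{1>a}=1$, $\Cov(X_i^{1>a},X_i^{1>b})=\tfrac13$, apply the multivariate CLT to the normalized sums $Y_j=n^{-1/2}\sum_i X_i^{1>j}$, and read off the Gaussian orthant probability. You are in fact more careful than the paper about the portmanteau boundary condition, tie handling, and the reflection $W\stackrel{d}{=}-W$ needed to pass from $\{W_a\ge 0\}$ to $\{W_a\le 0\}$.

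There is one bookkeeping wrinkle. Your symmetry reduction gives $\P[\UniqueBest_k(\MAJ_n)]=k\cdot\P[\UniqueBest_k(\MAJ_n,1)]$, and you then show $\P[\UniqueBest_k(\MAJ_n,1)]\to \P(Z_2\in H,\dots,Z_k\in H)$; combining these literally produces an extra factor of $k$ relative to the identity stated in the lemma. The paper sidesteps this by writing directly $\P[\UniqueBest_k(\MAJ_n)]=\P(Y_2\ge0,\dots,Y_k\ge 0)$, i.e.\ it is tacitly treating $\UniqueBest_k(\MAJ_n)$ as the per-candidate event $\UniqueBest_k(\MAJ_n,1)$ in this lemma (and the way the lemma is later invoked in the proof of the Condorcet theorem confirms that it is the single-candidate probability that is actually meant). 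So drop the factor-$k$ detour and your argument coincides with the paper's.
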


\begin{proof}
  Let $Y_j=\frac{1}{\sqrt n} \sum_{i=1}^n X_i^{1>j}$.
  By definition \ref{def:uniqueBest},
  \begin{equation}
    \P[\UniqueBest_k(\MAJ_n)]
    =
    \P(Y_2 \ge 0, \dots, Y_k \ge 0)
  \end{equation}
  But, $\E[Y_j]=0$, $\E[Y_j^2] = 1$ and $\Cov[Y_i,Y_j] = \frac{1}{3}$ for
  $i \neq j$. Thus, by the central limit theorem,
  $(Y_2, \dots, Y_k) \stackrel{\mathcal{D}}{\rightarrow} (Z_2, \dots, Z_k)$ and the result follows.
\end{proof}

\begin{proof}[Proof of Theorem \ref{thm:condorcet}]
  Clearly, any candidate has the same probability of being the unique
  best candidate. So it's enough to show that the probability that the
  first candidate is the unique best is maximized by majority,
  i.e. for some $\tau$ small enough,
  \begin{equation}
    \P[\UniqueBest_k(f,1)]
    \le
    \lim_{n \rightarrow \infty}
    \P[\UniqueBest_k(\MAJ_n,1)] + \frac{\epsilon}{k}
  \end{equation}
  But,
  \begin{equation}
    \P[\UniqueBest_k(f,1)]
    =
    \P(f(X^{1>2}) = \ldots = f(X^{1>k}) = 1)
  \end{equation}
  Let
  \begin{equation}
    \rho(k) = \rho(\Sigma\left(X_i^{1>2}\right), \dots, \Sigma\left(X_i^{1>k}\right); \P)
  \end{equation}
  To see that $\rho(k)<1$ it is by symmetry enough to show that
  $
  \rho(\Sigma\left(X_i^{1>2}\right), \Sigma\left(X_i^{1>3},
    \ldots, X_i^{1>k}\right); \P)<1
  $.
  But this follows by Lemma~\ref{lem:connRho}, since the bipartite
  graph of Lemma~\ref{lem:connRho} is complete
  and any edge has a probability of at least
  $\frac{1}{k!}$ since it occurs in at least one ordering.

  Hence, by applying Theorem~\ref{thm:discreteEGT}
  on the vectors $X^{1>2}, \ldots, X^{1>k}$
  using $f$ as $f_1, \ldots, f_k$
  we can find a $\tau=\tau(\eps,k)$ such that
  \begin{equation}
    \P(f(X^{1>2}) = \ldots = f(X^{1>k}) = 1)
    \le
    \P(\forall i: Z_i \in H) + \epsilon
  \end{equation}
  Lemma~\ref{lem:UniqueBestEqConj} now gives the result.
\end{proof}

\subsection{Cosmic coin flipping}
\label{sec:appCosmicCoin}
Here we use Theorem \ref{thm:nGauss2SplitIsop} to show that
majority maximizes the probability of all players agreeing
in cosmic coin flipping (Theorem \ref{thm:cosmicCoin}).
Remember that we want to maximize
\begin{equation}
  \mathcal{P}^{(k,n)}_\rho(f) := \P(f(Y_1) = \ldots = f(Y_k))
\end{equation}
where each $Y_i$ is uniform on $\{-1,1\}^n$ and
$\Cov[Y_i,Y_j] = \rho^2 I_n$ for $i \neq j$.

First we will show that the limit of the probability of all players agreeing
when using the majority function
corresponds to twice the right hand side of \eqref{eq:nGauss2SplitIsop}.
\begin{lemma}
  \label{lem:cosmicCoinMajEq}
  Let $Z_1,\dots,Z_k \sim \Norm(0, I_n)$ be jointly normal with
  $\Cov(Z_i,Z_j)=\rho^2 I_n$ for $i\neq j$.
  Then
  \begin{equation}
    \lim_{n \rightarrow \infty}
    \mathcal{P}^{(k,n)}_\rho (\MAJ_n)
    =
    2 \P(\forall j: Z_j \in H)
\end{equation}
where $H =\{x \in \Reals^n | x_1 \le 0\}$.
\end{lemma}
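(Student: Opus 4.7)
The plan is to mirror the proof of Lemma~\ref{lem:UniqueBestEqConj}, with the one extra wrinkle being the symmetry argument that produces the factor of $2$. Since $\MAJ_n$ takes values in $\{0,1\}$, I would first decompose
\begin{equation}
\mathcal{P}^{(k,n)}_\rho(\MAJ_n)
= \P(\forall j:\MAJ_n(Y_j)=1) + \P(\forall j:\MAJ_n(Y_j)=0).
\end{equation}
Since the joint law of $(Y_1,\ldots,Y_k)$ is invariant under simultaneous negation and $\MAJ_n(-y)=1-\MAJ_n(y)$ outside the measure-zero (in the limit) event $\sum_i y_i=0$, the two terms on the right are asymptotically equal, so it suffices to show
\begin{equation}
\lim_{n\to\infty}\P(\forall j:\MAJ_n(Y_j)=1)=\P(\forall j:Z_j\in H).
\end{equation}

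Next I would reduce to a central limit theorem computation. Set $S_j=\frac{1}{\sqrt{n}}\sum_{i=1}^n Y_{j,i}$; then $\{\MAJ_n(Y_j)=1\}=\{S_j\ge 0\}$. Each $Y_{j,i}$ is a $\pm 1$ Rademacher variable with $\E[Y_{j,i}]=0$, $\E[Y_{j,i}^2]=1$, and $\E[Y_{j_1,i}Y_{j_2,i}]=\rho^2$ for $j_1\ne j_2$ (and independent across $i$). The multivariate CLT gives $(S_1,\ldots,S_k)\stackrel{\mathcal{D}}{\to}(W_1,\ldots,W_k)$ where the $W_j$ are jointly standard normal with $\Cov(W_i,W_j)=\rho^2$ for $i\ne j$. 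Since the limiting distribution is absolutely continuous, the boundary $\{W_j=0\}$ carries no mass and
\begin{equation}
\lim_{n\to\infty}\P(\forall j:S_j\ge 0)=\P(\forall j:W_j\ge 0).
\end{equation}

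Finally, by symmetry of the centered Gaussian law, $\P(\forall j:W_j\ge 0)=\P(\forall j:W_j\le 0)$. The vector $(W_1,\ldots,W_k)$ has exactly the same joint distribution as $(Z_{1,1},\ldots,Z_{k,1})$, so
\begin{equation}
\P(\forall j:W_j\le 0)=\P(\forall j:Z_{j,1}\le 0)=\P(\forall j:Z_j\in H),
\end{equation}
and combining with the factor-of-two reduction completes the proof. The only real subtlety is the symmetrization step, and the worry there is purely bookkeeping about the tie event $\sum_i Y_{j,i}=0$; this is handled by noting that for any fixed $j$, $\P(\sum_i Y_{j,i}=0)=O(1/\sqrt{n})\to 0$, so a union bound over $j$ absorbs the discrepancy between $\P(\forall j:\MAJ_n(Y_j)=0)$ and $\P(\forall j:\MAJ_n(Y_j)=1)$ into the limit.
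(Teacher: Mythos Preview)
Your proof is correct and follows essentially the same approach as the paper: decompose into the all-ones and all-zeros events, pass to normalized sums $S_j=\frac{1}{\sqrt{n}}\sum_i Y_{j,i}$, and apply the multivariate CLT using the covariance $\rho^2$. The only cosmetic difference is that the paper applies the CLT to \emph{both} terms $\P(\forall j:S_j\ge 0)$ and $\P(\forall j:S_j<0)$ directly and then invokes symmetry of the limiting Gaussian to get the factor of $2$, whereas you first argue the two discrete terms are asymptotically equal (handling the tie event) and then compute only one limit; the paper's ordering avoids having to mention the tie event at all.
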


\begin{proof}
  Let $X_j=\frac{1}{\sqrt n} \sum_{i=1}^n Y_{j,i}$.
  Then,
  \begin{equation}
    \mathcal{P}^{(k,n)}_\rho (\MAJ_n)
    =
    \P(X_1 \ge 0, \dots, X_k \ge 0)
    +
    \P(X_1 < 0, \dots, X_k < 0)
  \end{equation}
  But, $\E[X_j]=0$, $\E[X_j^2] = 1$ and $\Cov[X_i,X_j] = \rho^2$ for
  $i \neq j$. Thus, by the central limit theorem,
  $(X_1, \dots, X_k) \stackrel{\mathcal{D}}{\rightarrow} (Z_{1,1}, \dots, Z_{k,1})$
  and the result follows.
\end{proof}

\begin{proof}[Proof of Theorem \ref{thm:cosmicCoin}]
  The theorem is trivial for $\rho=1$.
  So assume $\rho \in [0,1)$.
  To see that
  \begin{equation}
    \rho(\Sigma\left(Y_1\right), \dots, \Sigma\left(Y_k\right); \P)
    < 1
  \end{equation}
  it is by symmetry enough to show that
  $
  \rho(\Sigma\left(Y_1\right), \prod_{j=2}^k \Sigma\left(Y_j\right); \P)
  < 1
  $.
  But this follows from Lemma~\ref{lem:connRho},
  since every value of $Y$ occurs with non-zero probability
  and hence the bipartite graph of Lemma~\ref{lem:connRho} is connected
  and by finiteness, the minimal probability of an edge depend only on $k$ and $\rho$.
  Hence, by applying Theorem~\ref{thm:discreteEGT} on
  the variables $Y_1, \ldots, Y_k$ twice,
  first using $f$ as $f_1, \ldots, f_k$ and then using $1-f$,
  we can find a $\tau=\tau(\eps,k,\rho)$ such that
  \begin{equation}
    \mathcal{P}^{(k,n)}_\rho(f)
    =
    \E \prod_{j=1}^k f(Y_j)
    +
    \E \prod_{j=1}^k (1-f(Y_j))
    \le
    2 \P(\forall j: Z_j \in H) + \epsilon
  \end{equation}
  Lemma~\ref{lem:cosmicCoinMajEq} now gives the result.
\end{proof}

\section{Plurality is stablest}
\label{sec:appPlur}

Here we show that Conjecture \ref{conj:2GaussQSplitIsop} implies the
Plurality is stablest conjecture (Theorem \ref{thm:plurStabAppl}).

We start by showing an unconditional result that asserts that the
most stable low low-degree influence functions are essentially determined by
most stable partition of Gaussian space into $q$ parts
of equal measure.

\begin{theorem}
  \label{thm:lowInflStabToGaussStab}
  For any $q \ge 2$, $\rho \in [-\frac{1}{q-1},1]$
  and $\epsilon>0$ there exist $d$ and $\tau>0$ such that
  if $f:[q]^n \rightarrow \Delta_q$
  has $\Inf_i^{\le d}(f_j) \le \tau$, $\forall i,j$,
  then there exists a $g:\Reals^{n(q-1)} \rightarrow E_q$
  such that $\E g = \E f$ and
  \begin{equation}
    \left|
      \Sp(g)
      -
      \Sp(f)
    \right|
    \le \epsilon
  \end{equation}
\end{theorem}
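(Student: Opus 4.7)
The plan is to lift $f$ from $[q]^n$ to Gaussian space via the invariance principle (Theorem \ref{thm:invariancePrincipleFinal}) applied to two $\rho$-correlated copies, and then de-fuzzify using Lemmas \ref{lem:nonFuzziness} and \ref{lem:almostBalanced}. First, replace $f$ with the smoothed $\tilde f := T_{1-\gamma} f$ for a small $\gamma = \gamma(\epsilon,q,\rho)$; applying Lemma \ref{lem:smoothing} separately to each component $f_j$ (with $k=2$, $Q_1 = Q_2 = f_j$) and summing over $j$ gives $|\Sp(\tilde f) - \Sp(f)| \le \epsilon/4$, once we verify via Lemma \ref{lem:connRho} that the correlation of the joint $\rho$-correlated measure on $[q]\times[q]$ is bounded away from $1$ for $\rho < 1$. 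The smoothed $\tilde f$ automatically satisfies the tail bound $\Var \tilde f_j^{>d} \le (1-\gamma)^{2d}$, while still obeying $\Inf_i^{\le d} \tilde f_j \le \tau$, as required by Theorem \ref{thm:invariancePrincipleFinal}.

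Next, fix an orthonormal basis $\calX_i = (1, X_{i,1}, \ldots, X_{i,q-1})$ for functions $[q] \to \Reals$ under the uniform measure, and write $\tilde f = Q(\calX)$. For two $\rho$-correlated copies $(\omega,\lambda)$, the joint ensemble at coordinate $i$ is $(X_{i,a}(\omega_i), X_{i,a}(\lambda_i))_{a=1}^{q-1}$, whose covariance is $\begin{pmatrix} I & \rho I \\ \rho I & I \end{pmatrix}$; this can be matched by a joint Gaussian ensemble $(\calG_i, \calG_i')$ of dimension $2(q-1)$ per coordinate. Package $(\tilde f(\omega), \tilde f(\lambda))$ as a single $2q$-dimensional multilinear polynomial $R$ over this joint ensemble; its low-degree influences at each coordinate are bounded by $2\tau$. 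Apply Theorem \ref{thm:invariancePrincipleFinal} to $R$ with the test function
\[
\Psi(u,v) = \ip{f_{\Delta_q}(u)}{f_{\Delta_q}(v)},
\]
where $f_{\Delta_q}$ is the projection onto $\Delta_q$; this is Lipschitz on all of $\Reals^{2q}$ by Lemma \ref{lem:convexProjIsLipschitz} and the boundedness of $\Delta_q$. Since $\tilde f(\omega), \tilde f(\lambda) \in \Delta_q$, the discrete side equals $\Sp(\tilde f)$; on the Gaussian side, define $\tilde g : \Reals^{n(q-1)} \to \Delta_q$ by $\tilde g(z) := f_{\Delta_q}(Q(z))$, where $Q$ is now evaluated on Gaussians, so that the Gaussian side equals $\Sp(\tilde g)$. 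This gives $|\Sp(\tilde f) - \Sp(\tilde g)| \le \epsilon/4$.

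It remains to convert $\tilde g$ to an $E_q$-valued function with the right expectation. Apply Lemma \ref{lem:nonFuzziness} with $k=2$ and $g_1 = g_2 = \tilde g$, using an identical local cube-rounding in each copy so that $h_1 = h_2 =: g'$ is a single function; this produces $|\Sp(\tilde g) - \Sp(g')|$ and $\|\E g' - \E \tilde g\|_1$ both small. A separate single-copy application of Theorem \ref{thm:invariancePrincipleFinal} (with test function $f_{\Delta_q}$) gives $\|\E \tilde g - \E f\|_1$ small, so $\|\E g' - \E f\|_1$ is controlled by the triangle inequality. Finally, apply Lemma \ref{lem:almostBalanced}, again using identical sets of perturbation for both copies, to obtain a single $g : \Reals^{n(q-1)} \to E_q$ with $\E g = \E f$ exactly and $|\Sp(g) - \Sp(g')|$ small. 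Choosing $\gamma$, $d$, and $\tau$ in the right order so that all accumulated errors sum to at most $\epsilon$ completes the proof.

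The conceptual obstacle is the bookkeeping to maintain a single function $g$ throughout: Lemmas \ref{lem:nonFuzziness} and \ref{lem:almostBalanced} are stated for tuples $(h_1, \ldots, h_k)$ that may differ from copy to copy, but inspection of their proofs shows each $h_j$ is built from $g_j$ alone by a local rounding (respectively perturbation), so starting with identical copies and applying the same local choices preserves the single-function property with no change in the bounds. The degenerate case $\rho = 1$, where the two copies coincide and Lemma \ref{lem:smoothing} does not apply, reduces to $\Sp(f) = \|f\|_2^2$ and is handled by any measure-preserving Gaussian lift of $f$.
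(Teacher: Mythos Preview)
Your proof is correct and follows essentially the same route as the paper: smooth with $T_{1-\gamma}$ via Lemma~\ref{lem:smoothing}, apply the Lipschitz invariance principle (Theorem~\ref{thm:invariancePrincipleFinal}) with $\Psi(x,y)=\ip{f_{\Delta_q}(x)}{f_{\Delta_q}(y)}$ to pass to Gaussians, use a second invariance application to control the mean, and then de-fuzzify via Lemmas~\ref{lem:nonFuzziness} and~\ref{lem:almostBalanced}. Your explicit observation that those two lemmas must be applied with identical local choices in both copies to produce a \emph{single} function $g$ is a point the paper glosses over; note also that your (and the paper's) treatment of the degenerate endpoint $\rho=1$ is not quite right for genuinely fuzzy $f$, since any $g:\Reals^{n(q-1)}\to E_q$ has $\stab_1(g)=1$ while $\stab_1(f)=\|f\|_2^2$ can be as small as $1/q$.
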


\begin{definition}
  For $q \ge 2$,
  let $f_{\Delta_q}:\Reals^q \rightarrow \Delta_q$ denote the
  function
  which maps $x$ to the point in $\Delta_q$ which is closest to $x$.
\end{definition}

\begin{proof}[Proof of Theorem \ref{thm:lowInflStabToGaussStab}]
  The result is trivial for $\rho=1$ so assume $\rho \in [-\frac{1}{q-1},1)$.
  Let $(\Omega \times \Lambda, \mu)$, with the $\rho$-correlated measure
  $\mu(\omega, \lambda)=\rho 1_{\{\lambda=\omega\}} \frac 1 q + (1-\rho) \frac{1}{q^2}$
  be our base space and let
  $(\omega,\lambda)\in [q]^n \times [q]^n$ be drawn from $\mu^n$.

  Fix an orthonormal basis
  $\calV(x)=\{V_0(x) = 1, V_1(x), \dots, V_{q-1}(x)\}$
  for functions
  $[q] \rightarrow \Reals$
  and construct two sequences of orthonormal ensembles
  $\calX = \{\calX_1, \dots, \calX_n\}$ and
  $\calY = \{\calY_1, \dots, \calY_n\}$
  for functions $\Omega \rightarrow \Reals$ and $\Lambda \rightarrow
  \Reals$
  by letting
  $X_{i,j}(\omega_i) = V_{j}(\omega_i)$ and
  $Y_{i,j}(\lambda_i) = V_{j}(\lambda_i)$.
  Note that this means that
  \begin{equation}
    \Cov(X_{i_1,j_1}, Y_{i_2,j_2}) =
    \left\{
      \begin{array}[]{ll}
        \rho & \mbox{ if } i_1=i_2 \mbox{ and } j_1=j_2 > 0 \\
        0 & \mbox{ else }
      \end{array}
    \right.
  \end{equation}
  Expressing $f(x_1, \dots, x_n)$ as a q-dimensional multi-linear polynomial
  $Q(\calV(x_1), \dots, \calV(x_n))$ we get
  \begin{equation}
    \label{eq:ppExp}
    \Sp(f) = \sum_{i=1}^q \E[f_j(\omega)f_j(\lambda)]
    = \sum_{i=1}^q \E[Q_j(\calX)Q_j(\calY)]
  \end{equation}

  Let $\Qtilde=T_{1-\gamma}Q$ be a slightly smoothed version of $Q$.
  To show that $\rho(\Omega,\Lambda; \mu) < 1$, observe that
  $\rho(\Omega,\Lambda; \mu)$ is given by the supremum of
  $\E[f_1(\omega)f_2(\lambda)]$
  over all
  $f_1:\Omega\rightarrow \Reals$
  and
  $f_2:\Lambda\rightarrow \Reals$
  such that $Ef_i=0$ and $Ef_i^2=1$.
  But
  $
  \E[f_1(\omega)f_2(\lambda)]
  =
  \E[f_1(\omega) (\rho f_2(\omega) + (1-\rho) \E f_2)]
  =
  \rho \E[f_1(\omega)f_2(\omega)]
  \le
  |\rho|
  $ by Cauchy-Schwarz.
  Hence, $\rho(\Omega,\Lambda; \mu) < |\rho| < 1$
  and by Lemma \ref{lem:smoothing} we can find a
  $\gamma(\epsilon,\rho,q)>0$ s.t.
  \begin{equation}
    \label{eq:ppSmooth}
    \left|
      \E[Q_j(\calX)Q_j(\calY)]
      -
      \E[\Qtilde_j(\calX)\Qtilde_j(\calY)]
    \right|
    \le
    \frac{\epsilon}{2q}
  \end{equation}
  Since $Q(\calX)$ has range $\Delta_q$, the same holds for
  $\Qtilde(\calX)$. Hence,
  \begin{equation}
    \label{eq:ppEq}
    f_{\Delta_q}\Qtilde(\calX) = \Qtilde(\calX)
  \end{equation}
  (and similarly for $\calY$).
  We are now ready to apply the invariance principle (Theorem \ref{thm:invariancePrincipleFinal})
  using
  $\Psi(x,y) = \ip{f_{\Delta_q}(x)}{f_{\Delta_q}(y)}$.
  To see that $\Psi(x,y)$ is Lipschitz continuous
  note that $f_{\Delta_q}$ is Lipschitz
  by convexity of $\Delta_q$ and Lemma~\ref{lem:convexProjIsLipschitz},
  and the inner product $\ip{x}{y} = \sum_{i=1}^k x_i y_i$
  is Lipschitz on $\Delta_q^2$.
  Hence Theorem \ref{thm:invariancePrincipleFinal} implies that for some $\tau>0$ small enough,
  \begin{equation}
    \label{eq:ppInv}
    \left|
      \E[\ip{f_{\Delta_q}\Qtilde(\calX)}{f_{\Delta_q}\Qtilde(\calY)}]
      -
      \E[\ip{f_{\Delta_q}\Qtilde(\calG)}{f_{\Delta_q}\Qtilde(\calH)}]
    \right|
    \le
    \frac{\epsilon}{4 q}
  \end{equation}
  where $\calG$ and $\calH$ are two Gaussian sequences of orthonormal ensembles
  with
  \begin{equation}
    \Cov(G_{i_1,j_1}, H_{i_2,j_2}) =
    \left\{
      \begin{array}[]{ll}
        \rho & \mbox{ if } i_1=i_2, j_1=j_2 > 0\\
        0 & \mbox{ else }
      \end{array}
    \right.
  \end{equation}

  $f_{\Delta_q}\Qtilde$ applied to $\calG$ or $\calH$ can be thought
  of as a function $\Reals^{n(q-1)} \rightarrow \Delta_q$ creating a
  fuzzy partition of the $n(q-1)$-dimensional Gaussian space.
  The balance of this partition might not equal the balance of $f$
  though. In particular,
  \begin{equation}
    \E f = \E Q(\calX) =
    \E f_{\Delta_q}\Qtilde(\calX)
    \neq
    \E f_{\Delta_q}\Qtilde(\calG)
  \end{equation}
  But applying Theorem \ref{thm:invariancePrincipleFinal} again,
  using $\Psi(x) = f_{\Delta_q,j} (x)$ which by
  Lemma~\ref{lem:convexProjIsLipschitz} is Lipschitz continuous with
  $A=1$, we can bound the total variation distance by
  \begin{equation}
    \sum_{j=1}^q
    |\E f_{\Delta_q,j}\Qtilde(\calG) - \E f_{\Delta_q,j}\Qtilde(\calX)|
    \le
    q \frac{\epsilon}{4 q}
    =
    \frac{\epsilon}{4}
  \end{equation}
  Hence, by Lemma \ref{lem:nonFuzziness} and \ref{lem:almostBalanced}
  there exists a function
  $g:\Reals^{n(q-1)} \rightarrow E_q$ such that $\E g = \E f$ and
  \begin{equation}
    \label{eq:ppFuzzy}
    \left|
    \E[\ip{f_{\Delta_q}\Qtilde(\calG)}{f_{\Delta_q}\Qtilde(\calH)}]
    -
    \E[\ip{g(\calG)}{g(\calH)}]
    \right|
    \le
    \frac{\epsilon}{4}
  \end{equation}
  But $\E[\ip{g(\calG)}{g(\calH)}] = \Sp(g)$, hence
  combining equations \eqref{eq:ppExp}, \eqref{eq:ppSmooth}, \eqref{eq:ppEq},
  \eqref{eq:ppInv} and \eqref{eq:ppFuzzy}
  gives the desired result.
\end{proof}

In order to prove Theorem \ref{thm:plurStabAppl} we first show that the limit of the noise stability of $\PLUR{n,q}$ corresponds to the right hand side of \eqref{eq:2GaussQSplitIsop}.

\begin{lemma}
  \label{lem:PLUReqConj}
  Fix $\rho \in [-\frac{1}{q-1},1]$ and $q \ge 3$.
  Let $X,Y \sim \Norm(0, I_{q-1})$ and $\Cov(X,Y)=\rho I_{q-1}$.
  Then
  \begin{equation}
    \lim_{n \rightarrow \infty} \Sp(\PLUR_{n,q})
    =
    \P((X,Y) \in S_1^2 \cup \dots \cup S_q^2)
  \end{equation}
  where $S_1, \ldots, S_q$ is a standard simplex partition of $\Reals^{q-1}$.
\end{lemma}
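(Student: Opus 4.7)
The plan is to identify the limit of $\Sp(\PLUR_{n,q})$ as the probability that two appropriately correlated Gaussian vectors on the simplex hyperplane have the same argmax coordinate, and then show this matches the right-hand side by a change of coordinates.

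\textbf{Step 1 (Rewrite as an argmax event).} Since $\PLUR_{n,q}$ takes values in $E_q$, the noise stability simplifies to
\begin{equation}
\Sp(\PLUR_{n,q}) = \P(\PLUR_{n,q}(\omega)=\PLUR_{n,q}(\lambda)) + (\text{tie-breaking term}),
\end{equation}
where $(\omega,\lambda)$ is the correlated pair from Definition~\ref{def:SpQ}. Define the centered count vector $Z_j(\omega) = \frac{1}{\sqrt n}\sum_{i=1}^n (1_{\{\omega_i=j\}} - \tfrac{1}{q})$, so that $\sum_j Z_j(\omega)=0$ and $\PLUR_{n,q}(\omega) = e_{\arg\max_j Z_j(\omega)}$ (modulo ties), and similarly for $\lambda$.

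\textbf{Step 2 (Multivariate CLT).} By the multivariate CLT applied to the i.i.d.\ sequence of $2q$-dimensional random vectors $\bigl(1_{\{\omega_i=j\}}-1/q,\,1_{\{\lambda_i=j\}}-1/q\bigr)_{j\in[q]}$, the pair $(Z(\omega),Z(\lambda))$ converges in distribution to a centered Gaussian pair $(U,U')$. A direct computation of the marginal and cross covariances gives
\begin{equation}
\Cov(U_j,U_{j'}) = \tfrac{1}{q}\delta_{jj'}-\tfrac{1}{q^2},\qquad \Cov(U_j,U'_{j'}) = \rho\bigl(\tfrac{1}{q}\delta_{jj'}-\tfrac{1}{q^2}\bigr),
\end{equation}
and both $U$ and $U'$ are supported on the hyperplane $\{z:\sum_j z_j=0\}$.

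\textbf{Step 3 (Match with the simplex model).} Let $a_1,\dots,a_q\in\Reals^{q-1}$ satisfy \eqref{eq:stdSimplexVecConstr}, and set $W_j:=X\cdot a_j$, $W'_j:=Y\cdot a_j$ for $X,Y\sim\Norm(0,I_{q-1})$ with $\Cov(X,Y)=\rho I_{q-1}$. Using $\sum_j a_j=0$ and $a_j\cdot a_{j'} = \delta_{jj'} - \tfrac{1-\delta_{jj'}}{q-1}$, one checks that $(W,W')$ is a centered Gaussian on $\{\sum_j w_j=0\}$ with $\Cov(W_j,W_{j'}) = \delta_{jj'}-\tfrac{1-\delta_{jj'}}{q-1}$ and $\Cov(W_j,W'_{j'}) = \rho\cdot \Cov(W_j,W_{j'})$. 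Comparing covariance matrices, one sees that $(U,U')$ equals $\tfrac{\sqrt{q-1}}{q}(W,W')$ in distribution.

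\textbf{Step 4 (Conclude by scale-invariance of argmax).} Since $\arg\max$ is invariant under positive scaling,
\begin{equation}
\P(\arg\max U = \arg\max U') = \P(\arg\max W = \arg\max W') = \P((X,Y)\in S_1^2\cup\cdots\cup S_q^2),
\end{equation}
where the last equality is the definition of the standard simplex partition. It remains to pass from the discrete argmax to this limit. The Gaussian $U$ has a density on the hyperplane, so the set of ties $\{u:\#\arg\max u\ge 2\}$ has measure zero, and the same holds for $(U,U')$. Thus $\arg\max$ is almost-surely continuous at $(U,U')$, and the continuous mapping theorem yields $\lim_n \P(\arg\max Z(\omega)=\arg\max Z(\lambda)) = \P(\arg\max U=\arg\max U')$. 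The tie-breaking contribution in Step 1 also vanishes because for any tie-breaking rule, disagreement on ties is bounded by $\P(\text{ties occur in }\omega\text{ or }\lambda)\to 0$.

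\textbf{Main obstacle.} The steps are essentially a CLT plus a linear change of variables, so no deep difficulty arises; the only care needed is handling ties at finite $n$. One must verify that tie events contribute $o(1)$ to $\Sp(\PLUR_{n,q})$ regardless of the tie-breaking rule, which follows from the fact that the probability of any specific tie configuration in the limit Gaussian is zero combined with a standard Berry–Esseen style bound or a direct local CLT argument for the multinomial counts.
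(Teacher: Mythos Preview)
Your proof is correct and follows essentially the same approach as the paper: a multivariate CLT on the (centered) vote counts followed by a covariance computation to match the limiting Gaussian with the pair $(X,Y)$ and the simplex partition, plus the observation that ties have vanishing probability. The only cosmetic difference is that the paper maps directly into $\Reals^{q-1}$ by setting $V(\omega)=\sqrt{\tfrac{q-1}{q}}\,\tfrac{1}{\sqrt n}\sum_i a_{\omega_i}$ and showing $(V(\omega),V(\lambda))\Rightarrow(X,Y)$, whereas you work with the $q$-dimensional count vector on the hyperplane and then identify it with $(X\cdot a_j,Y\cdot a_j)_j$ up to scale; these are the same argument up to the linear isomorphism $z\mapsto \sum_j z_j a_j$.
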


\begin{proof}
  Let $S_1, \ldots, S_q$ be the standard simplex partition determined
  by the unit vectors $a_1, \ldots, a_q \in \Reals^{q-1}$ according to
  Definition \ref{def:standardSimplex}.
  By Definition \ref{def:SpQ},
  \begin{equation}
    \Sp(\PLUR_{n,q})
    =
    \E\ip{\PLUR_{n,q}(\omega)}{\PLUR_{n,q}(\lambda)}
  \end{equation}
  where $\omega, \lambda$ are uniform on $[q]^n$ and satisfy
  \eqref{eq:defSpQ}.
  Let
  \begin{equation}
    V(\omega)=\sqrt{\frac{q-1}{q}} \frac{1}{\sqrt{n}} \sum_{i=1}^n
    a_{\omega_i}
  \end{equation}
  Then, conditioning on having no ties which will happen with
  probability $1$ as $n \rightarrow \infty$, we have
  \begin{equation}
    \PLUR_{n,q}(\omega)=j
    \iff
    \left[
    \forall i \neq j:
    V(\omega) \cdot a_j
    >
    V(\omega) \cdot a_i
    \right]
    \iff
    V(\omega) \in S_j
  \end{equation}
  and
  \begin{equation}
    \label{eq:plurSimp1}
    \Sp(\PLUR_{n,q})
    =
    \P[(V(\omega),V(\lambda)) \in S_1^2 \cup \dots \cup S_q^2]
  \end{equation}
  Now,
  \begin{eqnarray*}
    &&\E[a_{\omega_i}]=0, \text{ since }
    \E[a_{\omega_i}] \cdot a_j = 1-(q-1)\frac{1}{q-1} = 0
    \\
    &&\E[a_{\omega_i} a_{\omega_i}^T]=\frac{q}{q-1} I_{q-1},
    \text{ since }
    \E[a_{\omega_i} a_{\omega_i}^T ] \cdot a_j
    = a_j + \sum_{k \neq j} a_k \frac{-1}{q-1}
    = \frac{q}{q-1} a_j
    \\
    &&\E[a_{\omega_i} a_{\lambda_i}^T]
    =
    \E[a_{\omega_i} \E[a_{\lambda_i}^T| \omega_i]]
    =
    \E[\rho a_{\omega_i} a_{\omega_i}^T] =
    \rho \frac{q}{q-1} I_{q-1},
  \end{eqnarray*}

  Hence, by the central limit theorem, $(V(\omega),V(\lambda))$
  converges to a normal distribution with the same parameters as $(X,
  Y)$. Thus,
  \begin{equation}
    \lim_{n\rightarrow \infty}
    \P((V(\omega), V(\lambda)) \in S_1^2 \cup \dots \cup S_q^2)
    =
    \P((X,Y) \in S_1^2 \cup \dots \cup S_q^2)
  \end{equation}
  which together with \eqref{eq:plurSimp1} gives the result.
\end{proof}

\begin{proof}[Proof of Theorem \ref{thm:plurStabAppl}]
  Fix $q \ge 2$, $\rho \in [-\frac{1}{q-1},1]$
  and $\epsilon>0$ and let $\tau$ and $d$ be the
  constants given by Theorem \ref{thm:lowInflStabToGaussStab}.
  For any $f:[q]^n \rightarrow \Delta_q$ with
  $\Inf^{\le d}_i(f_j) \le \tau$, $\forall i,j$,
  we can thus find a $g:\Reals^{n(q-1)}$ such that
  $\E g = \E f$ and $|\Sp(g)-\Sp(f)| \le \eps$.
  But Conjecture \ref{conj:2GaussQSplitIsop} and Lemma
  \ref{lem:PLUReqConj} implies that
  \begin{align}
    &\Sp(g) \le \lim_{n \rightarrow \infty} \Sp(\PLUR_{n,q})
    \hspace{0.4cm} \mbox { if } \rho \ge 0
    \mbox{ and } g \mbox{ is balanced }
    \shortintertext{and}
    & \Sp(g) \ge \lim_{n \rightarrow \infty} \Sp(\PLUR_{n,q})
    \hspace{0.4cm} \mbox { if } \rho \le 0
  \end{align}
  which gives the slightly stronger result that only requires
  $\Inf^{\le d}_i f_j $ small.
  That we may replace this low low-degree influence requirement with
  the simpler low influence requirement follows by noting that
  \begin{equation}
    \Inf^{\le d}_i(f_j)
    \le
    \Inf_i(f_j)
  \end{equation}
\end{proof}

\subsection{From Discrete to Continuous}

We have shown that the SSC implies the Plurality is Stablest
Conjecture. We now show that the reverse is also
true for $\rho \ge -\frac{1}{q-1}$,
thereby establishing the equivalence of the Plurality is Stablest
Conjecture and
the SSC for $\rho$ in this range.

\begin{theorem}
  \label{thm:reverseTransf}
  For any $q\ge 2$, $n\ge 1$, $\rho \in [-\frac{1}{q-1},1]$, $\epsilon>0$,
  $\tau>0$
  and $g:\Reals^n \rightarrow E_q$
  there exist an
  $m$ and an
  $f:[q]^m \rightarrow \Delta_q$
  with $\Inf_i(f_j) \le \tau, \forall i,j$
  such that $\E f = \E g$ and
  \begin{equation}
    \label{eq:discToContSp}
    \left|
      \Sp(f)
      -
      \Sp(g)
    \right|
    \le \epsilon
  \end{equation}
\end{theorem}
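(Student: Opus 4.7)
The plan is to build $f$ by composing a slightly smoothed version of $g$ with a central-limit-theorem (CLT) block embedding $[q]^m \to \Reals^n$, followed by a small adjustment to enforce $\E f = \E g$ exactly. First, since $g:\Reals^n \to E_q$ may be discontinuous, replace it by $\tilde g = U_{1-\eta} g$ for a small $\eta > 0$. By Lemma~\ref{lem:OrnsteinSmooth}, $\tilde g : \Reals^n \to \Delta_q$ is Lipschitz with some constant $L(\eta)$ (integration by parts against the Gaussian density bounds this by $(1-\eta)/\sqrt{1-(1-\eta)^2}$, which blows up as $\eta \to 0$), and the Ornstein--Uhlenbeck operator preserves means so $\E \tilde g = \E g$. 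By Lemma~\ref{lem:OrnsteinConvergence} we fix $\eta = \eta(\epsilon)$ so that $|\Sp(\tilde g) - \Sp(g)| \le \epsilon/3$.

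Next, write $m = nM$, partition $[m]$ into $n$ consecutive blocks $B_1, \ldots, B_n$ of size $M$, and pick any $V:[q] \to \Reals$ with $\E V(\omega_1) = 0$ and $\Var V(\omega_1) = 1$ under uniform $\omega_1$. Define $\sigma_i(\omega) = M^{-1/2} \sum_{j \in B_i} V(\omega_j)$ and $f_0(\omega) = \tilde g(\sigma_1(\omega), \ldots, \sigma_n(\omega))$. For $\rho$-correlated pairs $(\omega, \lambda)$, a direct moment computation gives $\E[V(\omega_j) V(\lambda_j)] = \rho$ with independence across $j$, so the multivariate CLT yields $(\sigma(\omega), \sigma(\lambda)) \xrightarrow{d} (X, Y)$ where $X, Y \sim \Norm(0, I_n)$ with $\Cov(X, Y) = \rho I_n$; this is precisely the target joint distribution. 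Continuity and boundedness of $(u, v) \mapsto \ip{\tilde g(u)}{\tilde g(v)}$ combined with the Portmanteau theorem then give $\Sp(f_0) \to \Sp(\tilde g)$ and $\E f_0 \to \E g$ as $M \to \infty$. On the influence side, changing a single $\omega_j$ perturbs only $\sigma_i$ for the block $i \ni j$, and by $O(1/\sqrt M)$ in norm, so Lipschitzness of $\tilde g$ yields $\Inf_j f_0 = O(L(\eta)^2 / M)$. With $\eta$ fixed first and $M$ then taken sufficiently large, we simultaneously force $|\Sp(f_0) - \Sp(\tilde g)| \le \epsilon/3$, $\|\E f_0 - \E g\|_1 \le \delta$ for a prescribed small $\delta$, and $\Inf_j f_0 \le \tau/2$.

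Finally, patch the mean exactly. The function $f_0$ already has $\E f_0$ within $\ell_1$ distance $\delta$ of $\E g$. Modify $f_0$ on a small set $A \subset [q]^m$ of uniform measure $O(\delta)$ whose indicator is itself a low-influence function (for example a level set of a sum of many cylinder functions, so that no single coordinate has influence more than $O(\delta/\log(1/\delta))$ on $\mathbf{1}_A$), and redefine $f|_A$ by a $\Delta_q$-valued rule chosen so that $\E(f - f_0)$ equals $\E g - \E f_0$ exactly. This costs $O(\delta)$ in $\Sp$ and $O(\delta)$ in each influence, absorbable by shrinking $\delta$ further (equivalently, by increasing $M$). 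The main technical subtlety is the parameter coupling: $L(\eta)$ diverges as $\eta \to 0$, so $\eta$ must be chosen purely from the stability budget in the first step, and only then may $M$ be taken large enough (depending on $L(\eta)$, $\tau$, $\epsilon$) to drive the CLT error, the influence bound, and the mean gap all below their thresholds. The other ingredients --- the CLT convergence, the continuity argument via Portmanteau, and the local mean correction --- are routine.
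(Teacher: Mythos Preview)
Your proof is correct and follows essentially the same route as the paper: smooth $g$ via the Ornstein--Uhlenbeck operator, pull it back through a block-CLT embedding $[q]^{nM}\to\Reals^n$, transfer $\Sp$ and the mean by weak convergence and boundedness/continuity of $\tilde g$, bound influences via the Lipschitz constant of $\tilde g$, and finally patch the mean on a set of vanishing measure. The one unnecessary elaboration is your insistence that the correction set $A$ itself be low-influence: since any modification supported on a set of measure $\delta$ perturbs each $\Inf_i(f_j)^{1/2}$ by at most $\sqrt{\delta}$ (triangle inequality for the seminorm $(\sum_{\sigma:\sigma_i>0}\hat h(\sigma)^2)^{1/2}$), the paper simply lets $\delta\to 0$ with the block size and does not constrain $A$ at all.
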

\begin{proof}
  Let $\tilde{g}=U_{1-\delta} g$ for some small $\delta>0$ be a smooth
  version of $g$.
  By Lemma \ref{lem:OrnsteinConvergence} we can pick $\delta$ small enough so that
  \begin{equation}
    \label{eq:discToCont1}
    \left|
      \Sp(g)
      -
      \Sp(\tilde g)
    \right|
    \le \frac{\epsilon}{2}
  \end{equation}
  Let $m=r n$ for some $r$ to be determined later
  and $\omega$ uniform on $[q]^m$ and $\lambda \in [q]^m$ selected
  according to \eqref{eq:defSpQ}.
  For $i=1\ldots n$, let
  \begin{equation}
    V_i(\omega) = \frac{q}{\sqrt{q-1}} \frac{1}{\sqrt{r}} \sum_{l=1}^r
    \left(1_{\{\omega_{(i-1)m+l}=1\}} - \frac{1}{q} \right)
  \end{equation}
  and define $\tilde{f}$ by
  $\tilde{f}(\omega) = \tilde{g} (V_i(\omega))$.
  By the central limit theorem
  $
  (V(\omega),V(\lambda)) \stackrel{\mathcal{D}}{\rightarrow}
  (X,Y) \text{ as } r
  \rightarrow \infty$
  where $X,Y$ has the distribution of Conjecture
  \ref{conj:2GaussQSplitIsop}.
  Since $\tilde g$ is bounded and continuous
  (Lemma \ref{lem:OrnsteinSmooth}) we have
  \begin{equation}
    \label{eq:discToCont2}
    \left|
      \Sp(\tilde f)
      -
      \Sp(\tilde{g})
    \right|
    =
    \left|
      \E\ip{\tilde{g}(V(\lambda))}{\tilde{g}(V(\omega))}
      -
      \E\ip{\tilde{g}(X)}{\tilde{g}(Y)}
    \right|
    \rightarrow 0
    \text{ as }
    r \rightarrow \infty
  \end{equation}
  $\tilde{f}$ might not have the same balance as $\tilde{g}$,
  but by changing its value on at most
  $\frac{1}{2} \sum_{j=1}^q \left| \E \tilde{f}_j - \tilde{g}_j \right|$ points
  we can create an $f$ such that $\E f = \E \tilde{g} \,\,\, (= \E g)$ and
  \begin{equation}
    \label{eq:discToCont3}
    \left|
      \Sp(\tilde f)
      -
      \Sp(f)
    \right|
    \le
    \sum_{j=1}^q \left| \E \tilde{f}_j - \E \tilde{g}_j \right|
    \rightarrow 0
    \text{ as }
    r \rightarrow \infty
  \end{equation}
  Picking $r$ large enough and combining
  \eqref{eq:discToCont1}, \eqref{eq:discToCont2} and \eqref{eq:discToCont3}
  gives \eqref{eq:discToContSp}.

  It remains to show that the influences of $f$ can be made small.
  But this follows from $\tilde{g}$ being Lipschitz; changing only one
  variable, say $\omega_{(i-1)m+l}$ cannot change $V(\omega)$ by more
  than $\frac{q}{\sqrt{q-1}}\frac{1}{\sqrt r}$, so
  \begin{equation}
    \Inf_{(i-1)m+l} f_j  = \E_\omega[ \Var_{\omega_{(i-1)m+l}} f_j] \le A
    \frac{q}{\sqrt{q-1}}\frac{1}{\sqrt r}
    \le \tau
  \end{equation}
  for $r$ large enough.
\end{proof}

\begin{proof}[Proof of Theorem \ref{thm:plurStabApplRev}]
  This follows by combining Theorem \ref{thm:reverseTransf} and Lemma
  \ref{lem:PLUReqConj} and letting the $\eps$ of
  Theorem \ref{thm:reverseTransf} go to $0$.
\end{proof}
\section{Conclusion}
In this paper we have demonstrated the relationship between optimally
noise stable low-influence partitions of discrete space and optimally noise
stable partitions of Gaussian space.
In particular we have applied this relationship to various problems in
social choice theory and hardness of approximation in computer science.

Of the two generalizations of Theorem~\ref{thm:Borell} considered
we have proved one. The other one remains an open problem.
We also note that the two directions of generalizations can be
combined yielding a more general conjecture stating that
the standard simplex partition of
$\Reals^n$ into $q>2$ parts maximizes the probability that $k>2$
positively correlated Gaussians fall into the same part.

It should be noted that our results give a direct correspondence
between discrete low-influence noise stability and Gaussian noise
stability. That is, even if the SSC is false, whatever the most stable
partition of Gaussian space is, it can be used to construct a most
stable low-influence balanced social choice function.
Moreover, as long as the \emph{least} stable partition of Gaussian
space does not depend on $\rho$ when $\rho \le 0$, it will give an
explicit optimal UGC hardness result for MAX-q-CUT.

\section*{Acknowledgments}
The second author would like to acknowledge Subhash Khot, Guy Kindler and Ryan O'Donnell for the introduction of the problem of Plurality is Stablest.
He would further like to thank Krzysztof Oleszkiewicz and Ryan O'Donnell for discussions related to SSC and to Krzysztof Oleszkiewicz for pointing out the relationship between
spherical stereometric results and Gaussian stability results. Finally the authors would like to thank Christer Borell, Johan H{\aa}stad, Guy Kindler and Jeffrey Steif
for helpful discussions and comments.

\bibliographystyle{plain}
\bibliography{condImpl,all,my}

\appendix
\section{Approximability of MAX-q-CUT}
\label{sec:maxqcut}

In this appendix we show that if we assume the Unique Games
Conjecture, then the optimal approximability constant of
MAX-q-CUT is directly related to the most stable partition of Gaussian
space into $q$ parts of equal measure as described in Conjecture
\ref{conj:2GaussQSplitIsop},
and together these two conjectures implies that the Frieze-Jerrum SDP
achieves the optimal approximation ratio.

\subsection{The Unique Games Conjecture}

The Unique Games Conjecture (UGC) was introduced by Khot in~\cite{Khot:02}.
It asserts the hardness of approximating the \emph{Unique Label
Cover} problem within any constant.

\begin{definition}
  The \emph{Unique Label Cover} problem,
  $\calL=(V,W,E,M,\{\sigma_{v,w}\}_{(v,w) \in E})$,
  is defined on a bipartite graph $(V \cup W, E)$ with a permutation
  $\sigma_{v,w} : [M] \rightarrow [M]$ associated with every edge
  $(v,w) \in E \subseteq V \times W$.
  A labeling $l:V\cup W \rightarrow [M]$ is said to satisfy an edge
  $(v,w)$ if
  \begin{equation}
    \sigma_{(v,w)}(l(w)) = l(v)
  \end{equation}
  The value of a labeling $l$, $\VAL_l (\calL)$, is the fraction of
  edges satisfied by $l$ and
  the value of $\calL$ is the maximal fraction of edges satisfied by any
  labeling,
  \begin{equation}
    \VAL(\calL) = \max_{l} \VAL_l(\calL)
  \end{equation}
\end{definition}

\begin{conjecture}
  \textbf{The Unique Games Conjecture.}
  For any $\eta, \gamma > 0$ there exists a $M=M(\eta,\gamma)$ such
  that it is NP-hard to distinguish instances $\calL$ of the Unique
  Label Cover problem with label set size $M$ having
  $\VAL(\calL) \ge 1-\eta$ from those having
  $\VAL(\calL) \le \gamma$.
\end{conjecture}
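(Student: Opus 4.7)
The final statement is the Unique Games Conjecture itself, which is not a theorem to be proved within this paper but a famous open problem posed by Khot in 2002. The paper uses it as a hypothesis for the hardness result in Theorem~\ref{thm:condMaxqCutResults}, rather than attempting to establish it. So any ``proof proposal'' I write is really a sketch of how one might attack a conjecture that, at the time of this paper, had resisted all direct attempts; I will describe the natural strategies and indicate where each runs aground.

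The most standard approach would be a reduction from a known NP-hard constraint satisfaction problem (say, a PCP-style starting point such as LABEL-COVER with projection constraints) to UNIQUE-LABEL-COVER, amplifying soundness via parallel repetition or a similar composition. Concretely, one would (i) start from a two-prover one-round game with projection property whose value is hard to distinguish between $1$ and $\gamma'$, (ii) raise the alphabet by parallel repetition or long-code style encoding so that the projections become bijections on the relevant sub-structure, and (iii) verify that the reduction preserves the completeness $1-\eta$ while driving soundness below $\gamma$. The output would be a Unique-Label-Cover instance $\calL$ with the required gap.

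The main obstacle, and the reason the conjecture is open, is step (ii): the projection-to-bijection step has no known soundness-preserving analogue. Parallel repetition reduces soundness only polynomially on projection games and produces alphabets where the natural decoded constraints are projections, not bijections; forcing bijectivity (``uniqueness'') appears to collapse the soundness one gains. A more recent avenue, which I would pursue as the best available route, is the Grassmann/2-to-2 games program: prove first a 2-to-$2$ (or 2-to-$1$) variant with perfect completeness, then try to bootstrap uniqueness via a further encoding. The 2-to-2 Games Theorem, now known, gets soundness arbitrarily small but completeness strictly less than $1$; closing the completeness gap to $1-\eta$ while retaining uniqueness is precisely the remaining barrier, and I do not see a way around it with present techniques. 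Thus my honest ``proposal'' is to pursue the Grassmann-encoding route and attempt to upgrade the 2-to-2 result to 1-to-1 with near-perfect completeness, while warning that this last step is exactly where the difficulty is concentrated.
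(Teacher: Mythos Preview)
Your assessment is correct: the statement is the Unique Games Conjecture, which the paper states as a conjecture and uses as a hypothesis for the conditional hardness result (Theorem~\ref{thm:condMaxqCutResults}), and does not attempt to prove. There is no proof in the paper to compare against, so your identification of the statement's role and your discussion of why it remains open are entirely appropriate.
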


\subsection{Optimal  approximability constants}

Next, we will show that for any $\epsilon>0$,
MAX-q-CUT can be approximated within $\alpha_q-\epsilon$ in polynomial time
while it is UG-hard to approximate it within $\beta_q + \epsilon$
where the constants $\alpha_q$ and $\beta_q$ are given by,

\begin{definition}
  \label{def:optApxConstants}
  For $q \ge 1$, let
  \begin{equation}
    \label{eq:alpha_q}
    \alpha_q
    =
    \lim_{n \rightarrow \infty}
    \sup_{g}
    \inf_{-\frac{1}{q-1} \le \rho \le 1}
    \frac{q}{q-1}
    \frac{1-\Sp(g)}
    {1-\rho}
  \end{equation}
  and
  \begin{equation}
    \label{eq:beta_q}
    \beta_q
    =
    \lim_{n \rightarrow \infty}
    \inf_{-\frac{1}{q-1} \le \rho \le 1}
    \sup_{g}
    \frac{q}{q-1}
    \frac{1-\Sp(g)}
    {1-\rho}
  \end{equation}
  where
  the supremum is over all
  $g:\Reals^n \rightarrow E_q$.
\end{definition}
\noindent
Note that the limit in \eqref{eq:alpha_q} and \eqref{eq:beta_q}
exist since they are limits of bounded functions increasing with $n$
(we can always ignore any number of dimensions while
specifying the partition).

We now show that $\alpha_q = \beta_q$ assuming Conjecture
\ref{conj:2GaussQSplitIsop}. To do this, we first show that we can
restrict attention to non-positive values of $\rho$ and for all such
values the standard simplex partition is optimal.

\begin{lemma}
  \label{lem:posRhoDominated}
  Fix $g:\Reals^n \rightarrow E_q$. Then
  $
  \displaystyle
  \inf_{0 \le \rho \le 1}
  \frac{1-\Sp(g)}
  {1-\rho}
  $
  is obtained by $\rho=0$.
\end{lemma}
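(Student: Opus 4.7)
The plan is to reduce the claim to a simple monomial-by-monomial inequality via the Hermite/Wiener chaos decomposition of $g$. Since each $g_j$ takes values in $\{0,1\}$, $g_j \in L^2(\Reals^n, \gamma)$ where $\gamma$ is the standard Gaussian measure, so it admits a Hermite expansion $g_j = \sum_\sigma c_{j,\sigma} H_\sigma$ with $U_\rho g_j = \sum_\sigma \rho^{|\sigma|} c_{j,\sigma} H_\sigma$. Consequently
\begin{equation}
\Sp(g) \;=\; \sum_{j=1}^q \E[g_j \cdot U_\rho g_j] \;=\; \sum_{d\ge 0} a_d \, \rho^d,
\qquad a_d \;:=\; \sum_{j=1}^q \sum_{|\sigma|=d} c_{j,\sigma}^2 \;\ge\; 0 .
\end{equation}

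Next I would identify the zeroth- and total-mass coefficients. Since $g_j(X)^2 = g_j(X)$ (because $g_j \in \{0,1\}$) and $\sum_j g_j \equiv 1$, Parseval gives
\begin{equation}
\sum_{d\ge 0} a_d \;=\; \sum_{j=1}^q \|g_j\|_2^2 \;=\; \sum_{j=1}^q \E g_j \;=\; 1,
\qquad a_0 \;=\; \sum_{j=1}^q (\E g_j)^2 \;=\; \Sp_{\rho=0}(g).
\end{equation}
In particular $\sum_{d\ge 1} a_d = 1 - a_0$.

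The value of $\frac{1-\Sp(g)}{1-\rho}$ at $\rho=0$ is $1-a_0$, so the desired monotonicity is equivalent to
\begin{equation}
1 - \sum_{d\ge 0} a_d \rho^d \;\ge\; (1-a_0)(1-\rho) \quad \text{for all } \rho \in [0,1],
\end{equation}
which after rearranging and using $\sum_{d\ge 1} a_d = 1-a_0$ simplifies to
\begin{equation}
\sum_{d\ge 1} a_d \, \rho^d \;\le\; \Bigl(\sum_{d\ge 1} a_d\Bigr) \rho .
\end{equation}
This is immediate: for $\rho \in [0,1]$ and $d \ge 1$ we have $\rho^d \le \rho$, and $a_d \ge 0$, so the inequality follows termwise. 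This completes the proof.

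There is no real obstacle; the only thing to double-check is that the Hermite expansion is valid, but boundedness of $g$ places it in $L^2(\gamma)$, and the action of $U_\rho$ on Hermite polynomials is standard. The argument is genuinely a one-line convexity/monomial comparison, once the stability is written as a nonnegative power series in $\rho$ summing to $1$.
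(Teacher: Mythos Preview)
Your proof is correct and in fact cleaner than the paper's. Both arguments rest on the same elementary inequality $\rho^d \le \rho$ for $d\ge 1$ and $\rho\in[0,1]$, applied termwise to a nonnegative power-series expansion of $\Sp(g)$. The difference is where that expansion is obtained: you work directly in Gaussian space via the Hermite (Wiener chaos) decomposition of $g$, using that $g_j^2=g_j$ and $\sum_j g_j=1$ to get $\sum_d a_d=1$ exactly. The paper instead invokes Theorem~\ref{thm:reverseTransf} to approximate $g$ by a discrete $f:[q]^m\to\Delta_q$, applies the same monomial bound to the (multilinear) Fourier expansion of $f$, and then lets the approximation error $\eps\to 0$. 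Your route avoids that detour entirely and yields the result without any limiting argument; the only additional fact you rely on is the standard eigenfunction property $U_\rho H_\sigma=\rho^{|\sigma|}H_\sigma$ for the Ornstein--Uhlenbeck semigroup, which is classical (Mehler's formula) though not stated in the paper in full generality.
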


\begin{proof}
  Fix $\eps>0$. By Theorem \ref{thm:reverseTransf} there is an
  $f:[q]^{m(\eps)} \rightarrow \Delta_q$ with $\E g = \E f$
  such that
  \begin{equation}
    \Sp(g)
    \le
    \Sp(f) + \eps
  \end{equation}
  On the other hand, by \eqref{eq:SpFourier} and \eqref{eq:fourierExpressions}
  \begin{equation*}
    \Sp(f)
    =
    \sum_{\sigma} \rho^{|\sigma|} \|c_\sigma\|_2^2
    \le
    \|\E f\|_2^2 + \rho (\|f\|_2^2 - \|\E f\|_2^2)
    \le
    \rho
    + (1-\rho) \|\E f\|_2^2
  \end{equation*}
  Hence,
  \begin{equation}
    \frac{1-\Sp(f)}{1-\rho}
    \ge
    1 - \|\E f\|_2^2
  \end{equation}
  and by the construction of $f$,
  \begin{equation}
    \label{eq:posRhoDominated1}
    \frac{1-\Sp(g)+\eps}{1-\rho}
    \ge
    1 - \|\E g\|_2^2
  \end{equation}
  Letting $\eps \rightarrow 0$,
  and noting that \eqref{eq:posRhoDominated1} holds with equality for
  $\eps=\rho=0$ gives the result.
\end{proof}

\begin{theorem}
  Assume Conjecture \ref{conj:2GaussQSplitIsop}.
  Then $\alpha_q = \beta_q$.
\end{theorem}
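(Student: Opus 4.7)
The plan is to show $\alpha_q = \beta_q$ by identifying both with the common value
\[
\gamma_q := \inf_{-\frac{1}{q-1} \le \rho \le 0} \frac{q}{q-1}\cdot\frac{1 - qI(\rho)}{1-\rho}.
\]
Note that $\gamma_q \le 1$ because at $\rho = 0$ the integrand equals $\frac{q}{q-1}(1 - 1/q) = 1$, using $qI(0) = 1/q$. The inequality $\alpha_q \le \beta_q$ is the standard sup-inf inequality, so it suffices to prove $\beta_q = \gamma_q$ and $\alpha_q \ge \gamma_q$.

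For $\beta_q$, I would split the range of $\rho$ at $0$. On $\rho \in [-\frac{1}{q-1}, 0]$, part (ii) of SSC (which extends to $\rho = 0$ via the elementary bound $\|\E g\|_2^2 \ge 1/q = qI(0)$) gives $\Sp(g) \ge qI(\rho)$ for every partition $g:\Reals^n \to E_q$, with equality achieved by a standard simplex partition whenever $n \ge q-1$. Hence, in the limit $n \to \infty$, $\inf_{\rho \in [-1/(q-1),0]} \sup_g \frac{q}{q-1}\cdot\frac{1-\Sp(g)}{1-\rho} = \gamma_q$. On $\rho \in [0,1]$, Lemma \ref{lem:posRhoDominated} gives $\frac{1-\Sp(g)}{1-\rho} \ge 1 - \|\E g\|_2^2 \ge 1 - 1/q$ for every $g$, with all inequalities tight at $\rho = 0$ for any balanced $g$. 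Taking $\sup_g$ inside (so $\sup_g \inf_{\rho \ge 0}(\ldots) \ge 1$ by the lemma) and comparing to the value $1$ at $\rho = 0$ forces $\inf_{\rho \ge 0}\sup_g \frac{q}{q-1}\cdot\frac{1-\Sp(g)}{1-\rho} = 1$. Combining the two regimes and using $\gamma_q \le 1$ gives $\beta_q = \gamma_q$.

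For $\alpha_q \ge \gamma_q$, fix any $n \ge q-1$ and let $g^*$ be the standard simplex partition of $\Reals^n$, which by definition satisfies $\stab_\rho(g^*) = qI(\rho)$. Applying Lemma \ref{lem:posRhoDominated} to this fixed $g^*$ shows that $\inf_{\rho \ge 0} \frac{q}{q-1}\cdot\frac{1-qI(\rho)}{1-\rho}$ is attained at $\rho = 0$ with value $1$, while the $\rho \le 0$ infimum equals $\gamma_q \le 1$; so $\inf_\rho \frac{q}{q-1}\cdot\frac{1-\stab_\rho(g^*)}{1-\rho} = \gamma_q$. Passing $n \to \infty$ yields $\alpha_q \ge \gamma_q$, and combined with $\alpha_q \le \beta_q = \gamma_q$ we conclude $\alpha_q = \beta_q$.

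The only subtlety lies in interchanging the $n \to \infty$ limit with the sup/inf operations, which is routine: the suprema over $g$ are nondecreasing in $n$ (lift any $g:\Reals^n \to E_q$ to $\Reals^{n+1}$ by ignoring one coordinate), and the extremizers in every case---standard simplex partitions for $\rho \le 0$, balanced partitions at $\rho = 0$---already exist in the fixed dimension $n = q-1$. No deeper obstacle is anticipated.
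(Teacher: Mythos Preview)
Your proposal is correct and follows essentially the same route as the paper: use Lemma~\ref{lem:posRhoDominated} to reduce to $\rho\in[-\tfrac{1}{q-1},0]$, then invoke part~(ii) of the SSC to identify the standard simplex partition as the universal maximizer of $\tfrac{q}{q-1}\tfrac{1-\Sp(g)}{1-\rho}$ on that range, which makes the $\sup_g$ and $\inf_\rho$ commute. The paper's proof is a two-line sketch of exactly this argument; you have simply made the common value $\gamma_q$ explicit and filled in the details (including the harmless computation that the $\rho\ge 0$ contribution to $\beta_q$ equals $1\ge\gamma_q$).
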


\begin{proof}
  By Lemma \ref{lem:posRhoDominated} the infimums in the definition of
  $\alpha_q$ and $\beta_q$ are obtained for
  $-\frac{1}{q-1} \le \rho \le 0$.
  The result now follows from the fact that
  for $\rho$ in this range,
  the least stable partition in Conjecture
  \ref{conj:2GaussQSplitIsop} does not depend on $\rho$.
\end{proof}

We now proceed to present the approximation algorithm and the
inapproximability argument which together implies Theorem~\ref{thm:condMaxqCutResults}.

\subsection{An approximation algorithm}
\label{sec:apxAlgo}
The approximation algorithm presented here is a generalization of
the algorithm presented in~\cite{FriezeJerrum:95} allowing for an arbitrary
partition to be used when rounding the relaxed
solution. The algorithm in~\cite{FriezeJerrum:95} corresponds exactly to
using the simplex partition of Conjecture
\ref{conj:2GaussQSplitIsop}, which (as we will see) is optimal if Conjecture
\ref{conj:2GaussQSplitIsop} is true.

Let
$a_1, \ldots, a_q \in \Reals^{q-1}$ be generating vectors of a standard simplex
partition of $\Reals^{q-1}$, i.e. satisfying \eqref{eq:stdSimplexVecConstr}.
Labeling the vertices with vectors from $a_1, \ldots a_q$ instead of numbers
from $[q]$, we can write the value of a MAX-q-CUT instance $\calM_q(V,E,w)$ as the following discrete optimization problem:

\begin{equation}
   \VAL(\calM_q)
   =
  \begin{array}[t]{ll}
    \max
    & \frac{q-1}{q} \sum_{(u,v) \in E} w_{(u,v)} (1 - l_u \cdot
    l_v)
    \vspace{2mm}
    \\
    \mbox{subject to} &
    l_u \in \{a_1, \ldots, a_q\} \mbox{ , } \forall u \in V
  \end{array}
\end{equation}

To obtain the SDP relaxation we allow the vectors to be arbitrary
points on the unit sphere $S^{n-1}$ while adding the constraint
$z_u \cdot z_v \ge -\frac{1}{q-1}$
which by \eqref{eq:stdSimplexVecConstr} holds for
vectors in $\{a_1, \ldots, a_q\}$,

\begin{equation}
  \mbox{SDP-VAL} (\calM_q)
  :=
  \begin{array}[t]{ll}
  \max &
  \frac{q-1}{q} \sum_{(u,v) \in E} w_{(u,v)} (1 - z_u \cdot z_v)
  \vspace{2mm}
  \\
  \mbox{subject to} &
  \begin{array}[t]{l}
    z_u \in \Reals^n { , } \forall u \in V
    \\
    z_u \cdot z_u = 1 { , } \forall u \in V
    \\
    z_u \cdot z_v \ge -\frac{1}{q-1} { , } \forall u,v \in V
  \end{array}
\end{array}
\end{equation}
where $n=|V|$ denotes the number of vertices.

The rounding applied to the solution of SDP-VAL is parametrized by an
integer $m$, a
partition $\cal{A}$ = $\{A_1, \dots, A_q\}$ of $\Reals^{m}$ and
an error constant $\delta>0$,

\newtheorem*{apxAlgo}{Approximation algorithm
  $\mathcal{R}({m, \cal{A}, \delta)}$}
\begin{apxAlgo}{\ }
  \begin{enumerate}
  \item Compute an almost optimal solution $(z_u)_{u \in V}$ to
    $\mbox{SDP-VAL}(\mathcal{M}_q)$ using semidefinite programming.
    This will achieve a value of $\mbox{SDP-VAL}(\mathcal{M}_q) -
    \delta$.
  \item
    Pick a projection matrix $T:\Reals^{m \times n}$,
    by letting $T_{ij}$ be i.i.d. $\Norm(0,1)$.

  \item
    For each $u \in V$, let $l(u) = i$ iff $T z_u \in A_i$.
  \end{enumerate}
\end{apxAlgo}

Let $\mbox{R-VAL}(\calM_q) = \VAL_l(\calM_q)$ be the value of
the rounded labeling. Then, the expected approximation ratio is:

\begin{eqnarray*}
  \frac{\E[\mbox{R-VAL}(\calM_q)]}{\VAL(\calM_q)}
  \ge
  \frac{\E[\mbox{R-VAL}(\calM_q)]}{\mbox{SDP-VAL}(\calM_q) + \delta}
  =
  \frac{
    \sum_{(u,v)\in E} w_{(u,v)}
    \P\left(l(u) \neq l(v)\right)
  }{
    \frac{q-1}{q} \sum_{(u,v) \in E} w_{(u,v)} (1 - z_u \cdot z_v) + \delta
  }
  \ge
  \\
  \ge
  \frac{q}{q-1} \inf_{\stackrel{z_u,z_v \in S^{n-1}}{z_u \cdot z_v \ge -\frac{1}{q-1}}}
  \frac{1-\P((Tz_u,Tz_v) \in A_1^2\cup \dots \cup A_q^2)}
  {1-z_u \cdot z_v + \delta}
\end{eqnarray*}
But,
$Tz_u, Tz_v \in \Norm(0,I_m)$ and $\Cov(Tz_u, Tz_v) = (z_u \cdot z_v) I_m$,
so by picking $m$ large enough and $A_1, \dots, A_q$ so that the limit in
\eqref{eq:alpha_q} is almost achieved (bar, say $\delta$), and then picking
$\delta=\delta(\epsilon)$ small enough,
we get an approximation ratio of $\alpha_q - \epsilon$, for any $\epsilon>0$.
We have proved the following result
\begin{theorem}
  For any $\epsilon>0$ there exists a polynomial time
  algorithm that approximates MAX-q-CUT within
  $\alpha_q\!-\!\epsilon$.
\end{theorem}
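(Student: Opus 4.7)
The plan is to analyze the algorithm $\mathcal{R}(m,\mathcal{A},\delta)$ stated just above the theorem, for parameters $m$, $\mathcal{A}$, and $\delta$ to be chosen in terms of $\epsilon$. First, I would solve $\mbox{SDP-VAL}(\calM_q)$ to within additive error $\delta$ in polynomial time using a standard SDP solver, obtaining unit vectors $(z_u)_{u \in V}$ in $\Reals^n$ satisfying the triangle-style constraint $z_u \cdot z_v \ge -1/(q-1)$. Since SDP-VAL is a relaxation of the integer program defining $\VAL(\calM_q)$, we have $\mbox{SDP-VAL}(\calM_q) \ge \VAL(\calM_q)$, so any lower bound on $\E[\mbox{R-VAL}]/\mbox{SDP-VAL}$ immediately yields the same lower bound on $\E[\mbox{R-VAL}]/\VAL$.

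Second, I would analyze the rounding edge by edge. Since $T$ has i.i.d.\ $\Norm(0,1)$ entries and $z_u,z_v \in S^{n-1}$, the pair $(Tz_u, Tz_v)$ is jointly normal with marginals $\Norm(0, I_m)$ and $\Cov(Tz_u, Tz_v) = (z_u \cdot z_v)\,I_m$. Thus for each edge $(u,v) \in E$,
\[
\P\bigl(l(u) \ne l(v)\bigr) = 1 - \P\bigl((Tz_u, Tz_v) \in A_1^2 \cup \dots \cup A_q^2\bigr),
\]
and since this probability depends on $z_u, z_v$ only through $\rho := z_u \cdot z_v$, bounding the ratio edge-by-edge and absorbing $\delta$ gives
\[
\frac{\E[\mbox{R-VAL}(\calM_q)]}{\VAL(\calM_q)} \ge \frac{q}{q-1}\inf_{\rho \in [-\frac{1}{q-1},1]} \frac{1 - \Sp(g_{\mathcal{A}})}{1 - \rho + \delta'}
\]
for some small $\delta'$ controlled by $\delta$, where $g_{\mathcal{A}} : \Reals^m \to E_q$ is the function corresponding to $\mathcal{A}$, whose Gaussian noise stability at parameter $\rho$ equals $\P((X,Y) \in \bigcup_i A_i^2)$ with $X,Y \sim \Norm(0, I_m)$, $\Cov(X,Y) = \rho I_m$.

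Third, I would use the definition of $\alpha_q$ in \eqref{eq:alpha_q}: for any $\epsilon > 0$, pick $m$ large and $g_{\mathcal{A}} : \Reals^m \to E_q$ so that $\inf_\rho \frac{q}{q-1}\frac{1 - \Sp(g_{\mathcal{A}})}{1 - \rho}$ is within $\epsilon/2$ of $\alpha_q$; this is possible because the supremum over $g$ is a monotone (in $m$) limit, so the asymptotic value is approached by a finite-dimensional partition. Then choose $\delta$ small enough that inserting $\delta'$ in the denominator lowers the ratio by at most a further $\epsilon/2$. Combining the two losses gives the approximation ratio $\alpha_q - \epsilon$. The main delicacy is the uniform control of the perturbation from $\delta$ in the denominator over all $\rho \in [-1/(q-1), 1]$, but this is standard since the numerator $1 - \Sp(g_{\mathcal{A}})$ vanishes at least linearly in $1-\rho$ as $\rho \to 1$ for any reasonable partition (e.g.\ by monotonicity of $\Sp$), so the ratio stays bounded and the additive slack contributes $O(\delta)$ uniformly.
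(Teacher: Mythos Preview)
Your proposal is correct and follows essentially the same argument as the paper: analyze the algorithm $\mathcal{R}(m,\mathcal{A},\delta)$ edge by edge, use that $(Tz_u,Tz_v)$ is jointly Gaussian with $\Cov(Tz_u,Tz_v)=(z_u\cdot z_v)I_m$, pass to the infimum of the per-edge ratio, and finally choose $m$, $\mathcal{A}$, and $\delta$ so that the resulting bound is within $\epsilon$ of $\alpha_q$. Your treatment of the additive $\delta$ slack and the behavior near $\rho\to 1$ is, if anything, slightly more careful than the paper's (which simply inserts $+\delta$ in the denominator without comment), but the route is the same.
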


\subsection{Inapproximability results}

We will now prove that MAX-q-CUT is UG-hard to approximate within any
factor greater than $\beta_q$. To do so, we present a reduction from
the Unique Label Cover problem to MAX-q-CUT following the same outline
as the corresponding reduction for MAX-CUT given in~\cite{KKMO:07}.
The reduction is based on a \emph{Probabilistically Checkable Proof} (PCP)
whose proof $\Pi$ consists of the function tables of $\{f_w\}_{w\in W}$, where
$f_w : [q]^M \rightarrow [q]$ is expected to be the \emph{long code}
of $w$'s label $l(w)$, i.e. $f_w(x) = x_{l(w)}$.
In order to be able to reduce the PCP to MAX-q-CUT, the PCP verifier
$\calV_\rho$
is designed to use an acceptance predicate which reads two random
function values from the proof and accepts iff they differ.
Thus, a MAX-q-CUT instance $\calM_q$ can be created from the PCP
by letting the vertices be the function values that can be read by
$\calV_\rho$, the edges the pairs of function values that are
compared, and the weights the probability of that comparison being
made by $\calV_\rho$.
The verifier is parametrized by $\rho \in [-\frac{1}{q-1},1]$.

\newtheorem*{pcpVerifier}{PCP Verifier $\calV_\rho$}
\begin{pcpVerifier}{\ }
  \begin{enumerate}
  \item Pick $v \in V$ at random and two of its neighbors $w,w'$ at
    random.
  \item Pick $x \in [q]^M$ at random.
  \item Pick $y \in [q]^M$ to be a $\rho$-correlated copy of $x$,
    i.e. each $y_i$ is independently selected using the conditional
    distribution
    \begin{equation}
      \mu(y_i|x_i)=\rho 1_{\{y_i=x_i\}} + (1-\rho) \frac{1}{q}
    \end{equation}
  \item Accept if $f_wP_{\sigma_{v,w}}(x) \neq
    f_{w'}P_{\sigma_{v,w'}} (y)$
  \end{enumerate}
  where $P_\sigma : [q]^M \rightarrow [q]^M$ denotes the function
  $P_\sigma(x_1, \dots, x_M) = (x_{\sigma(1)}, \dots, x_{\sigma(M)})$.
\end{pcpVerifier}

Using a result from~\cite{KhotRegev:03} we can assume that the graph is regular
on the $V$ side so that $(v,w)$, and similarly $(v,w')$,
picked by $\calV_\rho$ corresponds to a
an edge selected uniformly at random.

\begin{lemma}{\em\textbf{(Completeness). }}
  \label{lem:completeness}
  Fix $\rho \in [-\frac{1}{q-1},1]$.
  Then, for any Unique Label Cover problem $\calL$ with
  $\VAL(\calL) \ge 1-\eta$ there exists a proof $\Pi$ such that
  \begin{equation}
    \P[\calV_\rho \text{ accepts } \Pi]
    \ge
    (1-2\eta) \frac{q-1}{q}(1-\rho)
  \end{equation}
\end{lemma}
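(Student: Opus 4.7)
The plan is to construct the proof $\Pi$ by taking any labeling $l$ of $\calL$ with $\VAL_l(\calL) \ge 1-\eta$ and defining $f_w : [q]^M \to [q]$ as the long code of $l(w)$, i.e. $f_w(x) = x_{l(w)}$ for every $w \in W$. I will then lower-bound $\P[\calV_\rho \text{ accepts } \Pi]$ by conditioning on the event $\calE$ that \emph{both} edges $(v,w)$ and $(v,w')$ chosen by the verifier are satisfied by $l$.

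On the event $\calE$, we have $\sigma_{v,w}(l(w)) = l(v) = \sigma_{v,w'}(l(w'))$, so after the coordinate permutations we get $f_wP_{\sigma_{v,w}}(x) = x_{l(v)}$ and $f_{w'}P_{\sigma_{v,w'}}(y) = y_{l(v)}$. Thus conditional on $\calE$, acceptance reduces to the event $\{x_{l(v)} \neq y_{l(v)}\}$, whose probability under the $\rho$-correlated distribution on $(x_{l(v)}, y_{l(v)})$ is exactly $(1-\rho)\tfrac{q-1}{q}$ (one should also note that $\rho \ge -\tfrac{1}{q-1}$ ensures this lies in $[0,1]$, so the verifier's noise step is well-defined).

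To bound $\P(\calE)$ from below, I would use the regularity of the graph on the $V$ side: this makes the distribution of $(v,w)$ uniform on edges, so if $p_v$ denotes the fraction of $v$'s neighbors $w$ with $(v,w)$ satisfied by $l$, then $\E_v[p_v] = \VAL_l(\calL) \ge 1-\eta$. Since $w$ and $w'$ are chosen independently given $v$, Jensen's inequality gives
\begin{equation}
\P(\calE) = \E_v[p_v^2] \ge (\E_v[p_v])^2 \ge (1-\eta)^2 \ge 1 - 2\eta.
\end{equation}

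Multiplying this with the conditional acceptance probability yields the desired bound $\P[\calV_\rho \text{ accepts } \Pi] \ge (1-2\eta)\tfrac{q-1}{q}(1-\rho)$. No real obstacle arises here; the only subtlety is the double use of the satisfied-edge event, which is where the factor $(1-2\eta)$ (rather than $(1-\eta)$) comes from, and it is essential to exploit both the $V$-regularity (to reinterpret $\VAL_l$ as $\E_v[p_v]$) and the conditional independence of $w$ and $w'$ given $v$ to square-out $p_v$ via Jensen.
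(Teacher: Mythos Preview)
Your proof is correct and follows essentially the same approach as the paper's: construct $\Pi$ from the long codes of an optimal labeling $l$, condition on both sampled edges being satisfied, and compute the conditional acceptance probability as $\tfrac{q-1}{q}(1-\rho)$. The only minor difference is that the paper simply asserts $\P(\calE)\ge 1-2\eta$ (implicitly via a union bound, since each of $(v,w)$ and $(v,w')$ is marginally a uniform edge by $V$-regularity), whereas you obtain it through $\E_v[p_v^2]\ge(\E_v[p_v])^2\ge(1-\eta)^2\ge 1-2\eta$; both arguments are valid.
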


\begin{proof}
  Let $l$ be the optimal assignment for $\calL$ and
  $f_w$ be the long code of $l(w)$, i.e.
  \begin{equation}
    f_w(x) = x_{l(w)}
  \end{equation}

  With probability at least $1-2\eta$, both edges $(v,w)$ and $(v,w')$
  are satisfied by $l$. In this case,
  \begin{equation}
    f_wP_{\sigma_{v,w}}(x) = x_{\sigma_{v,w}(l(w))} = x_{l(v)}
    \text{ and }
    f_{w'}P_{\sigma_{v,w'}}(y)
    = y_{l(v)}
  \end{equation}
  and $\calV_\rho$ accepts with probability
  \begin{equation}
    \P[x_{l(v)} \neq y_{l(v)}]
    =
    1 - \left(\rho + \frac{1-\rho}{q}\right)
    =
    \frac{q-1}{q}(1-\rho)
  \end{equation}
\end{proof}

\begin{lemma}{\em\textbf{(Soundness).} }
  \label{lem:soundness}
  Fix $\rho \in [-\frac{1}{q-1},1]$ and $\epsilon>0$.
  Then, there exists a $\gamma=\gamma(q,\rho,\epsilon)>0$
  such that for any Unique Label Cover problem $\calL$ with
  $\VAL(\calL) \le \gamma$ and any proof $\Pi$,
  \begin{equation}
    \label{eq:soundness}
    \P[\calV_\rho \text{ accepts } \Pi]
    \le
    1 - \Lambda^-_q(\rho) + \epsilon
  \end{equation}
  where
  \begin{equation}
    \label{eq:lambda_q_minus}
    \Lambda^-_q(\rho) = \lim_{n \rightarrow \infty}
    \inf_{g:\Reals^n \rightarrow E_q} \Sp(g)
  \end{equation}
\end{lemma}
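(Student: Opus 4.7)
The plan is to prove the contrapositive: assume $\P[\calV_\rho \text{ accepts } \Pi] > 1 - \Lambda_q^-(\rho) + \epsilon$ and extract from $\Pi$ a labeling of $\calL$ satisfying more than a $\gamma$ fraction of edges. The first step is to rewrite acceptance in stability language. For each $v \in V$, let
$(g_v)_i(x) = \P_w\bigl[f_w P_{\sigma_{v,w}}(x) = i\bigr]$,
where $w$ is a uniformly random neighbor of $v$; by the left-regularity reduction this makes $(v,w)$ uniform on edges. Since $w$ and $w'$ are drawn independently inside $\calV_\rho$ and $(x,y)$ is $\rho$-correlated on $[q]^M$, a direct expansion yields
\begin{equation}
\P[\calV_\rho \text{ accepts}] \;=\; 1 - \E_v\bigl[\Sp(g_v)\bigr].
\end{equation}
Our hypothesis becomes $\E_v[\Sp(g_v)] < \Lambda_q^-(\rho) - \epsilon$, so a Markov-style averaging (using $\Sp(g_v)\in[0,1]$) yields a fraction at least $\epsilon/2$ of vertices $v$, henceforth \emph{bad}, satisfying $\Sp(g_v) < \Lambda_q^-(\rho) - \epsilon/2$.

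Next, apply Theorem~\ref{thm:lowInflStabToGaussStab} with error parameter $\epsilon/4$ to obtain constants $d=d(q,\rho,\epsilon)$ and $\tau=\tau(q,\rho,\epsilon)$. If a bad $v$ had $\Inf_i^{\le d}(g_{v,j}) \le \tau$ for all $i,j$, the theorem would produce a Gaussian $g^*:\Reals^{M(q-1)} \to E_q$ with $\E g^* = \E g_v$ and $|\Sp(g^*)-\Sp(g_v)| \le \epsilon/4$; since $\Sp(g^*) \ge \Lambda_q^-(\rho)$ by the definition \eqref{eq:lambda_q_minus}, this would force $\Sp(g_v) \ge \Lambda_q^-(\rho) - \epsilon/4$, contradicting badness. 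Hence for every bad $v$ the set
$S_v := \{i : \Inf_i^{\le d}(g_v) \ge \tau\}$
is non-empty; since $g_v$ is $\Delta_q$-valued its total variance is at most $1$, so \eqref{eq:dInflBound} gives $|S_v| \le d/\tau$.

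The bridge back to the label cover instance uses that $g_v = \E_w[f_w \circ P_{\sigma_{v,w}}]$ together with the convexity of $\Inf_i^{\le d}$ and the fact that permuting input coordinates permutes influences:
\begin{equation}
\Inf_j^{\le d}(g_v) \;\le\; \E_w\!\left[\Inf_{\sigma_{v,w}(j)}^{\le d}(f_w)\right].
\end{equation}
Thus for any $j \in S_v$, a Markov argument shows that at least a $\tau/2$ fraction of neighbors $w$ of $v$ satisfy $\sigma_{v,w}(j) \in T_w$, where $T_w := \{k : \Inf_k^{\le d}(f_w) \ge \tau/2\}$ and $|T_w| \le 2d/\tau$ again by \eqref{eq:dInflBound}.

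Finally, randomly label $\calL$ by choosing $l(v)$ uniformly from $S_v$ at each bad $v$ (and arbitrarily elsewhere), and $l(w)$ uniformly from $T_w$ at each $w$. Conditioning on a uniformly random edge $(v,w)$ with $v$ bad, the probability that $l(v)=j$ is some $j\in S_v$ is at least $1/|S_v| \ge \tau/d$; conditionally the chance that $\sigma_{v,w}(j)\in T_w$ is at least $\tau/2$; and then $l(w)=\sigma_{v,w}(j)$ with probability at least $1/|T_w| \ge \tau/(2d)$. Multiplying by the $\epsilon/2$ probability that $v$ is bad yields $\E[\VAL_l(\calL)] \gtrsim \epsilon\tau^3/d^2$, and choosing $\gamma$ strictly below this explicit function of $(q,\rho,\epsilon)$ contradicts $\VAL(\calL)\le\gamma$. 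The principal obstacle is the second step, where one must be sure that the infimum defining $\Lambda_q^-(\rho)$ genuinely serves as a uniform lower bound on the Gaussian stability of any $E_q$-valued partition of $\Reals^{M(q-1)}$ produced by Theorem~\ref{thm:lowInflStabToGaussStab}; everything else is Markov averaging and the standard long-code decoding bookkeeping.
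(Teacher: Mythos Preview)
Your argument is essentially the paper's: rewrite acceptance as $1-\E_v\Sp(g_v)$, Markov to a positive fraction of ``bad'' $v$, invoke Theorem~\ref{thm:lowInflStabToGaussStab} to produce an influential coordinate, and decode via influential coordinates using convexity of $\Inf^{\le d}$. The only differences are cosmetic: the paper labels $v$ deterministically by the single most influential coordinate and $w$ with probability \emph{proportional} to $\Inf_i^{\le d}\tilde f_w$ (yielding $\gamma\approx \epsilon\tau/(qd)$ in one step), whereas you sample uniformly from the influential sets $S_v,T_w$ and insert an extra Markov split; also, since $(P_\sigma x)_i=x_{\sigma(i)}$ one has $\Inf_j(f\circ P_\sigma)=\Inf_{\sigma^{-1}(j)}(f)$, so your displayed bound and the subsequent decoding should carry $\sigma_{v,w}^{-1}(j)$ rather than $\sigma_{v,w}(j)$---a harmless but consistent sign slip.
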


\begin{proof}
  For $w \in W$, let $\tilde{f}_w : [q]^M \rightarrow E_q$ defined
  by
  \begin{equation}
    \tilde{f}_w(x) = \vec{e}_{f_w(x)}
  \end{equation}
  map the value of $f_w$ onto one of $q$ unit vectors,
  and for $v \in V$, let $g_v: [q]^M \rightarrow \Delta_q$ be defined by
  \begin{equation}
    g_v(x) = \E_{w}[\tilde{f}_w P_{\sigma_{v,w}}(x)]
  \end{equation}
  where the expectation is over a random neighbor $w$ of $v$. Then,
  \begin{eqnarray*}
    \P[\calV_\rho \text{ accepts } \Pi]
    &=&
    \E_{v,w,w',x,y}[1-\ip{\tilde{f}_w
      P_{\sigma_{v,w}}(x)}{\tilde{f}_{w'}P_{\sigma_{v,w'}}(y)}]
    =
    \\
    &=&
    1-\E_{v,x,y}
    \left[
      \ip
      {g_v(x)}
      {g_v(y)}
    \right]
    =
    1-\E_v \Sp(g_v)
  \end{eqnarray*}

  Now suppose $\Pi$ is a proof such that \eqref{eq:soundness} is not
  satisfied, i.e,
  \begin{equation}
    \label{eq:soundRev}
    \E_v \Sp(g_v)
    <
    \Lambda^-_q(\rho) - \epsilon
  \end{equation}

  We need to show that this implies $\VAL(\calL) > \gamma$.
  To do so it is enough to create a random labeling $l$ such that
  \begin{equation}
    \E_l [\VAL_l(\calL)] > \gamma
  \end{equation}

  Let $V_{\text{good}} = \{v \in V|\Sp(g_v) \le \Lambda^-_q(\rho) -
  \frac{\epsilon}{2}\}$.
  Since $\Sp (g_v) \ge 0$, \eqref{eq:soundRev} implies that
  $|V_{\text{good}}| \ge \frac \epsilon 2 |V|$.
  Further, for $v \in V_{\text{good}}$,
  Theorem \ref{thm:lowInflStabToGaussStab} implies that
  $\max_i \Infd_i g_v \ge \tau$, for some $d$ and $\tau>0$
  depending only on $q$,$\rho$ and $\epsilon$.

  The assignment $l$ is created as follows:
  \begin{enumerate}
  \item For $v \in V$, let $l(v)=i$, where $i$ maximizes $\Infd_i g_v$ (ties broken arbitrarily)
  \item For $w \in W$, let $l(w)=i$ with probability proportional to
    $\Infd_i \tilde{f}_w$.
  \end{enumerate}
  Since \eqref{eq:dInflBound} holds for vector-valued functions,
  this means that
  \begin{equation}
    \P_l(l(w) = i)
    \ge
    \frac{\Infd_i \tilde{f}_w}{q d}
  \end{equation}
  For $v \in V_{\text{good}}$,
  \begin{eqnarray*}
    \tau
    &\le&
    \Infd_{l(v)} g_v
    =
    \Infd_{l(v)} \E_{w}[\tilde{f}_w P_{\sigma_{v,w}}(x)]
    \le
    \E_{w} \Infd_{l(v)} \tilde{f}_w P_{\sigma_{v,w}}(x)
    =
    \\
    &=&
    \E_{w} \Infd_{\sigma_{v,w}^{-1}(l(v))} \tilde{f}_w (x)
    \le
    qd \P_{w,l}[l(w) = \sigma_{v,w}^{-1}(l(v))]
    =
    qd \P_{w,l}[l \text{ satisfies } (v,w)]
  \end{eqnarray*}
  where the second inequality follows from convexity of $\Infd_i$.
  Hence,
  \begin{equation}
    \E_l [\VAL_l(\calL)]
    =
    \P_{l,v,w}(l \text{ satisfies } (v,w))
    \ge
    \frac \epsilon 2
    \cdot
    \frac \tau {qd}
  \end{equation}
  Picking $\gamma = \frac \epsilon 4 \cdot \frac \tau {qd} > 0$
  finishes the proof.
\end{proof}

Together, the soundness and completeness lemmas implies the following
inapproximability result for MAX-q-CUT:

\begin{theorem}
  For any $\epsilon>0$ it is UG-hard to approximate MAX-q-CUT within
  $\beta_q + \epsilon$.
\end{theorem}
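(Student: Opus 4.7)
The plan is to run a standard long-code-based PCP reduction from Unique Label Cover to MAX-q-CUT using the verifier $\calV_\rho$, and then combine the Completeness and Soundness Lemmas with the UGC. First I would turn a given Unique Label Cover instance $\calL$ into a weighted MAX-q-CUT instance $\calM_q$ whose vertex set is $W \times [q]^M$: a $[q]$-labeling of these vertices encodes a proof $\Pi = \{f_w\}_{w \in W}$ (with the label at $(w,z)$ playing the role of $f_w(z)$), and the edge weight on the pair of vertices that $\calV_\rho$ reads in step~4 is defined to equal the joint probability of $\calV_\rho$'s choices in steps~1--3 producing exactly that pair. By this construction, for any labeling $l$ corresponding to a proof $\Pi$,
\[
  \VAL_l(\calM_q) = \P\left[\calV_\rho \text{ accepts } \Pi\right],
\]
so $\VAL(\calM_q) = \sup_\Pi \P[\calV_\rho \text{ accepts } \Pi]$.

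Next, fix $\rho \in [-\tfrac{1}{q-1},1)$ along with small $\eta,\epsilon'>0$, and let $\gamma = \gamma(q,\rho,\epsilon')$ be furnished by the Soundness Lemma. By Completeness, if $\VAL(\calL) \ge 1-\eta$ then $\VAL(\calM_q) \ge (1-2\eta)\tfrac{q-1}{q}(1-\rho)$, while Soundness gives that $\VAL(\calL) \le \gamma$ forces $\VAL(\calM_q) \le 1 - \Lambda_q^-(\rho) + \epsilon'$. Under the UGC it is NP-hard to distinguish these two cases of $\calL$, and hence NP-hard to approximate $\calM_q$ within
\[
  \frac{1 - \Lambda_q^-(\rho) + \epsilon'}{(1-2\eta)\,\tfrac{q-1}{q}(1-\rho)};
\]
letting $\eta,\epsilon'\to 0$, this tends to $\tfrac{q}{q-1}\tfrac{1-\Lambda_q^-(\rho)}{1-\rho}$.

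The final step is to choose $\rho$ so that this ratio does not exceed $\beta_q + \epsilon$. Unwinding the definition $\Lambda_q^-(\rho) = \lim_n \inf_{g:\Reals^n \to E_q}\Sp(g)$ gives
\[
  \tfrac{q}{q-1}\tfrac{1-\Lambda_q^-(\rho)}{1-\rho} = \lim_{n\to\infty} \sup_{g:\Reals^n \to E_q} \tfrac{q}{q-1}\tfrac{1-\Sp(g)}{1-\rho},
\]
while $\beta_q = \lim_n \inf_\rho \sup_g \tfrac{q}{q-1}\tfrac{1-\Sp(g)}{1-\rho}$, with the inner $\sup_g$ monotone in $n$. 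The main delicate point is interchanging this $\lim_n$ with $\inf_\rho$, i.e. selecting a single $\rho^\ast$ whose reduction ratio is at most $\beta_q+\epsilon$. I would handle this by first picking $n$ large enough that $\inf_\rho \sup_g^{(n)} \tfrac{q}{q-1}\tfrac{1-\Sp(g)}{1-\rho} \le \beta_q+\epsilon/2$, then choosing $\rho^\ast$ nearly attaining this inf, and finally invoking compactness of $[-\tfrac{1}{q-1},1-\delta]$ together with the monotone convergence of $\inf_g^{(n)}\Sp(g)$ to $\Lambda_q^-(\rho)$ (uniform on this compact $\rho$-range) to ensure that the reduction's ratio at $\rho^\ast$ is within $\epsilon/2$ of $\sup_g^{(n)}\frac{q}{q-1}\frac{1-\Sp(g)}{1-\rho^\ast}$. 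Everything else---the construction of $\calM_q$, the Completeness/Soundness bounds, and the UG-hardness of distinguishing Unique Label Cover gaps---is already in place, so only this limit bookkeeping requires care.
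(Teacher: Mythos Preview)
Your proposal is correct and follows exactly the paper's approach: construct the MAX-$q$-CUT instance from the long-code PCP verifier $\calV_\rho$, invoke the Completeness and Soundness Lemmas to get a gap, form the ratio $\tfrac{q}{q-1}\tfrac{1-\Lambda_q^-(\rho)}{1-\rho}$, and then optimize over $\rho$. The paper's own proof is in fact terser than yours---it simply writes down this ratio and concludes with ``since this holds for any $\rho \in [-\tfrac{1}{q-1},1]$ the result follows,'' without addressing the $\lim_n$/$\inf_\rho$ interchange you single out; your compactness/Dini discussion is more care than the paper itself provides.
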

\begin{proof}
  By Lemma \ref{lem:completeness} and \ref{lem:soundness} it is
  UG-hard to distinguish instances of MAX-q-CUT with value
  at least
  $(1-2\eta) \frac{q-1}{q}(1-\rho)$
  from instances with value at most
  $1 - \Lambda^-_q(\rho) + \epsilon$
  for any $\eta,\epsilon>0$.
  Thus, it is UG-hard to approximate MAX-q-CUT within
  \begin{equation}
    \frac
    {1 - \Lambda^-_q(\rho) + \epsilon}
    {(1-2\eta) \frac{q-1}{q}(1-\rho)}
    =
    \frac{q}{q-1}
    \frac
    {1 - \Lambda^-_q(\rho)}
    {1-\rho}
    + \epsilon'
  \end{equation}
  where $\epsilon'>0$ can be made arbitrarily small by picking
  $\eta$ and $\epsilon$ small enough.
  Since this holds for any $\rho \in [-\frac{1}{q-1},1]$ the result follows.
\end{proof}

\end{document}